\documentclass[journal]{IEEEtran}
\usepackage{amsmath,amssymb,mathtools,amsthm}
\usepackage{graphicx}
\usepackage{cite}
\usepackage{tikz}
\usetikzlibrary{arrows.meta}

\newtheorem{theorem}{Theorem}
\newtheorem{lemma}{Lemma}
\newtheorem{proposition}{Proposition}
\newtheorem{corollary}{Corollary}
\theoremstyle{definition}
\newtheorem{assumption}{Assumption}
\newtheorem{definition}{Definition}
\theoremstyle{remark}

\newcommand{\R}{\mathbb{R}}
\newcommand{\ones}{\mathbf{1}}
\newcommand{\range}{\operatorname{range}}
\newcommand{\rank}{\operatorname{rank}}

\RequirePackage{color}\definecolor{RED}{rgb}{1,0,0}\definecolor{BLUE}{rgb}{0,0,0} 
\DeclareOldFontCommand{\sf}{\normalfont\sffamily}{\mathsf} 
\providecommand{\DIFadd}[1]{{\color{black}#1}} 
\providecommand{\DIFdel}[1]{} 
\providecommand{\DIFaddbegin}{\color{black}} 
\providecommand{\DIFaddend}{\color{black}} 
\providecommand{\DIFdelbegin}{} 
\providecommand{\DIFdelend}{} 
\providecommand{\DIFdelFL}[1]{} 
\providecommand{\DIFaddbeginFL}{\color{black}} 
\providecommand{\DIFaddendFL}{\color{black}} 
\providecommand{\DIFdelbeginFL}{} 
\providecommand{\DIFdelendFL}{} 
\newcommand{\DIFscaledelfig}{0.5}
\RequirePackage{settobox} 
\RequirePackage{letltxmacro} 
\newsavebox{\DIFdelgraphicsbox} 
\newlength{\DIFdelgraphicswidth} 
\newlength{\DIFdelgraphicsheight} 
\LetLtxMacro{\DIFOincludegraphics}{\includegraphics} 
\newcommand{\DIFaddincludegraphics}[2][]{{\color{black}\fbox{\DIFOincludegraphics[#1]{#2}}}} 
\newcommand{\DIFdelincludegraphics}[2][]{
\sbox{\DIFdelgraphicsbox}{\DIFOincludegraphics[#1]{#2}}
\settoboxwidth{\DIFdelgraphicswidth}{\DIFdelgraphicsbox} 
\settoboxtotalheight{\DIFdelgraphicsheight}{\DIFdelgraphicsbox} 
\scalebox{\DIFscaledelfig}{
\parbox[b]{\DIFdelgraphicswidth}{\usebox{\DIFdelgraphicsbox}\\[-\baselineskip] \rule{\DIFdelgraphicswidth}{0em}}\llap{\resizebox{\DIFdelgraphicswidth}{\DIFdelgraphicsheight}{
\setlength{\unitlength}{\DIFdelgraphicswidth}
\begin{picture}(1,1)
\thicklines\linethickness{2pt} 
{\color[rgb]{1,0,0}\put(0,0){\framebox(1,1){}}}
{\color[rgb]{1,0,0}\put(0,0){\line( 1,1){1}}}
{\color[rgb]{1,0,0}\put(0,1){\line(1,-1){1}}}
\end{picture}
}\hspace*{3pt}}} 
} 
\LetLtxMacro{\DIFOaddbegin}{\DIFaddbegin} 
\LetLtxMacro{\DIFOaddend}{\DIFaddend} 
\LetLtxMacro{\DIFOdelbegin}{\DIFdelbegin} 
\LetLtxMacro{\DIFOdelend}{\DIFdelend} 
\DeclareRobustCommand{\DIFaddbegin}{\DIFOaddbegin \let\includegraphics\DIFaddincludegraphics} 
\DeclareRobustCommand{\DIFaddend}{\DIFOaddend \let\includegraphics\DIFOincludegraphics} 
\DeclareRobustCommand{\DIFdelbegin}{\DIFOdelbegin \let\includegraphics\DIFdelincludegraphics} 
\DeclareRobustCommand{\DIFdelend}{\DIFOaddend \let\includegraphics\DIFOincludegraphics} 
\LetLtxMacro{\DIFOaddbeginFL}{\DIFaddbeginFL} 
\LetLtxMacro{\DIFOaddendFL}{\DIFaddendFL} 
\LetLtxMacro{\DIFOdelbeginFL}{\DIFdelbeginFL} 
\LetLtxMacro{\DIFOdelendFL}{\DIFdelendFL} 
\DeclareRobustCommand{\DIFaddbeginFL}{\DIFOaddbeginFL \let\includegraphics\DIFaddincludegraphics} 
\DeclareRobustCommand{\DIFaddendFL}{\DIFOaddendFL \let\includegraphics\DIFOincludegraphics} 
\DeclareRobustCommand{\DIFdelbeginFL}{\DIFOdelbeginFL \let\includegraphics\DIFdelincludegraphics} 
\DeclareRobustCommand{\DIFdelendFL}{\DIFOaddendFL \let\includegraphics\DIFOincludegraphics} 
\RequirePackage{listings} 
\RequirePackage{color} 
\lstdefinelanguage{DIFcode}{ 
  moredelim=[il][\color{red}\scriptsize]{\%DIF\ <\ }, 
  moredelim=[il][\color{black}\sffamily]{\%DIF\ >\ } 
} 
\lstdefinestyle{DIFverbatimstyle}{ 
	language=DIFcode, 
	basicstyle=\ttfamily, 
	columns=fullflexible, 
	keepspaces=true 
} 
\lstnewenvironment{DIFverbatim}{\lstset{style=DIFverbatimstyle}}{} 
\lstnewenvironment{DIFverbatim*}{\lstset{style=DIFverbatimstyle,showspaces=true}}{} 
\begin{document}

\title{Sparse Feedback Implementation for Sender--Receiver Transportation Linear-Quadratic Control}

\author{Anders Hansson and Johan L{\"o}fberg\thanks{Anders Hansson and Johan L{\"o}fberg are with the Division of Automatic Control, Link\"oping University, SE--581 83 Link\"oping, Sweden. Email: anders.g.hansson@liu.se.}}

\maketitle

\begin{abstract}
We study a sparse linear-quadratic problem for transportation dynamics.  \DIFaddbegin \DIFadd{The sparsity pattern has a natural directed-graph representation in which vertices are storage locations and edges are transportation links.  }\DIFaddend The goal is to compute the optimal control signal without applying the usually dense optimal feedback gain directly.  We show that the optimal feedback gain can be factorized as the product of a sparse matrix and the inverse of another sparse matrix from the right.  \DIFaddbegin \DIFadd{In contrast to an existing factorization that uses an inverse from the left, the proposed factors use graph-adapted state coordinates.  When the underlying graph is a tree, we give, for every edge orientation, a closed formula for the Riccati matrix that determines these factors and a graph-based bound on their possible nonzero entries.  }\DIFaddend This factorization can reduce the online cost relative to dense feedback multiplication and permits distributed evaluation.
The main message is that linear quadratic control need not appear dense when expressed in graph-adapted coordinates, and that these coordinates can reveal intrinsic sparsity.
\end{abstract}

\begin{IEEEkeywords}
Linear-quadratic control, Riccati equations, sparse feedback, \DIFdelbegin \DIFdel{deflating subspaces, }\DIFdelend transportation networks\DIFaddbegin \DIFadd{, directed trees}\DIFaddend .
\end{IEEEkeywords}

\section{Introduction}

This paper studies sparse implementation of the optimal feedback for a sparse linear-quadratic (LQ) problem with transportation dynamics.  Transportation dynamics describe systems in which material, water, heat, or goods are stored at locations and moved between locations with transport delays.  Relevant applications include multi-echelon inventory systems with lead times~\cite{IgnaciukBartoszewicz2010}, structured transportation linear-quadratic problems~\cite{HeydenPatesRantzer2018,HeydenPatesRantzer2020}, irrigation and water-delivery canals~\cite{Malaterre1998,LemosPintoRijoRato2013}, and district-heating networks with flow-induced delays~\cite{MadsenHolst1992,BendtsenValKallesoeKrstic2017}.

\DIFaddbegin \DIFadd{The transportation equations define a directed graph: each storage location is a vertex, and each transportation link is an edge directed from its sender to its receiver.  The graph identifies which storage and transport variables are coupled and which entries of the feedback factors may be nonzero.
}

\DIFaddend %

The applications that motivate this work are large and sparse.  For such systems, computing a centralized optimal feedback gain is not enough: the gain is often dense, and applying it directly can destroy the computational and communication advantages of the underlying network.  We therefore seek an equivalent sparse implementation of the same unconstrained optimal controller.

The closest related work is the recent preprint of Adlercreutz and Pates~\cite{AdlercreutzPates2026}, which develops sparse implementations of LQ controllers for transportation problems based on Cholesky factorizations of shift-operator matrices. Their approach, which we refer to as a left-inverse (LI) implementation, rewrites the optimality conditions to obtain a sparse solve in the input variables when the underlying network is a tree.  \DIFaddbegin \DIFadd{In contrast, the right-inverse (RI) implementation developed here solves for state coordinates associated with the graph and then applies a sparse input matrix.  Both implementations produce the unconstrained optimal control signal, but the sparse inverse is applied to a different factor.  }\DIFaddend More broadly, structured control has been studied by imposing communication or sparsity constraints at the design stage, leading to convex formulations under quadratic invariance (QI)~\cite{RotkowitzLall2006} and to system-level synthesis (SLS) approaches that enforce locality directly on the closed-loop responses~\cite{WangMatniDoyle2018,WangMatniDoyle2019}. 
QI preserves convexity under sparsity constraints, and SLS imposes locality on closed-loop responses, both restricting the controller class relative to unconstrained LQ.

\DIFdelbegin \DIFdel{In contrast, we start from the unconstrained LQ solution and extract a sparse implementation via a right-inverse (RI) factorization, showing sparsity is intrinsic to the optimal controller. It is strongly influenced by the underlying graph structure, which provides a priori information about the sparsity pattern. In particular, aspects of the sparsity can be inferred directly from the graph without solving the LQ problem.
}
\DIFaddbegin \DIFadd{For an unconstrained infinite-horizon discrete-time LQ problem, the optimal feedback gain is determined by the stabilizing solution of the discrete-time algebraic Riccati equation (DARE).  We therefore determine this solution and the possible nonzero entries of the resulting feedback factors.
}

\DIFaddend %
\DIFdel{Our route is based on deflating subspaces.  Deflating-subspace methods are a standard way to solve }\DIFdelend \DIFaddbegin \DIFadd{We first reduce the DARE and eliminate the }\DIFaddend \DIFdelbegin \DIFdel{discrete-time }\DIFdelend \DIFdelbegin \DIFdel{linear-quadratic problems through the discrete-time }\DIFdelend \DIFdelbegin \DIFdel{algebraic Riccati equation (DARE) }\DIFdelend \DIFdelbegin \DIFdel{: the Schur-vector method of Laub~\cite{Laub1979}, the generalized-eigenvalue formulation of van Dooren~\cite{VanDooren1981}, and standard Riccati accounts~\cite{BittantiLaubWillems1991} all connect the stabilizing solution to an invariant or deflating subspace.  These methods are reliable, and they are central in numerical Riccati theory, including large-scale and sparse settings~\cite{MehrmannWatkins2000,BennerLiPenzl2008}.  However, the basis returned by a standard deflating-subspace computation is usually dense, even when the control problem is sparse. The key finding of this paper is that the relevant deflating subspace admits a sparse nonorthogonal basis that yields a sparse right-inverse implementation of the optimal feedback.
The existence of such a basis is established under a graph-induced structural constraint that can be verified a priori.
All sparsity guarantees in this paper are derived under a provable structural characterization of DARE solution. We then show that, in all examples and tested cases, a finer graph-interpretable structure appears, which we describe in terms of intersection components.
}\DIFdelend \DIFaddbegin \DIFadd{blocks associated with transportation delays.  For every edge orientation of a connected tree, we then give an explicit formula for the remaining storage-state matrix as a positive sum of rank-one matrices associated with connected vertex sets.  The formula also gives the matrix products that determine the RI factors.
}\DIFaddend 

The contributions are \DIFdelbegin \DIFdel{not merely a restatement of the problem setup; they advance the sparse Riccati and transportation LQ control literature in the following ways}\DIFdelend \DIFaddbegin \DIFadd{as follows}\DIFaddend .
\begin{itemize}
    \item \DIFaddbegin \DIFadd{We express the unconstrained optimal feedback as a sparse RI factorization instead of applying the dense feedback gain directly.}\DIFaddend
    \item \DIFaddbegin \DIFadd{For every edge orientation of a connected tree, we derive a closed formula for the reduced DARE solution that determines the RI factors.}\DIFaddend
    \item \DIFaddbegin \DIFadd{We use this formula to bound the possible nonzero entries of the sparse RI input matrix directly from the graph.}\DIFaddend
    \item \DIFaddbegin \DIFadd{We compare the online solve and multiplication costs of the RI implementation, the LI implementation, and direct dense feedback multiplication.}\DIFaddend
\end{itemize}

\paragraph*{Notation.}
Subscripts are used for time indices, labels, and vector and matrix entries.   The dimensions of standard vectors and matrices are inferred from context.  A vector may be written as $x=(x_1,\ldots,x_n)$.  In matrix products, this vector is tacitly identified with the column matrix with entries $x_1,\ldots,x_n$.  For a statement $A$, the indicator $1_A$ equals $1$ if $A$ is true and equals $0$ otherwise.  For a finite set $W$ with a fixed ordering and a subset $\Omega\subseteq W$, the indicator vector of $\Omega$ in $\R^{|W|}$ is the vector whose entry indexed by $w\in W$ is $1_{w\in\Omega}$.  The indicator vector of a tree means the indicator vector of its vertex set.  Matrix row or column indices fix the ordering when $W$ is an index set.  The standard basis vector indexed by $i$ is denoted $\hat e_i$.  An undirected path is a list of distinct vertices in which consecutive vertices are joined by an edge, ignoring edge orientation.  A chain for a binary relation is a finite list of indices in which each consecutive pair satisfies the relation.  An equivalence class of a relation $\sim$ is a maximal set of indices such that $p\sim q$ for every pair $p,q$ in the set.  Symbols tied to the graph are introduced where they first appear.

\section{Transportation linear-quadratic problem}\label{sec:problem}

We first define the transportation network and the associated LQ problem data.  Let $V$ be the set of storage vertices and let $E$ be the set of directed transportation edges.  We write $G=(V,E)$ for this directed graph, with $|V|=N$ and $|E|=m$.  Each vertex corresponds to one storage state, and each edge corresponds to one delayed transport state and one input channel.  The edge orientation specifies sending and receiving endpoints.  For each edge, let its sender be the vertex from which the current input removes material and let its receiver be the vertex to which the delayed edge state later adds material.  Enumerate the edges as $E=\{e_1,\ldots,e_m\}$.  Define the sender and receiver selection matrices $S,R\in\R^{N\times m}$ by taking the $i$th column of $S$ to be the standard basis vector at the sender of edge $e_i$, and the $i$th column of $R$ to be the standard basis vector at the receiver of edge $e_i$.  The signed incidence matrix is $C=R-S\in\R^{N\times m}$.  Thus the $i$th column of $C$ has one entry $+1$ at the receiver of $e_i$, one entry $-1$ at the sender of $e_i$, and zeros elsewhere.  Reversing an edge only changes the sign of one column and interchanges its sender and receiver.

At time $k$, the state is $x_k=(s_k,d_k)\in\R^{N+m}$, where $s_k\in\R^N$ are storage states and $d_k\in\R^m$ are delayed transport states.  The input is $u_k\in\R^m$.  The parameter $\beta$ is the discount factor, $r=\sqrt\beta$, and $0<\beta<1$ throughout the paper.  The sender--receiver dynamics are
\begin{equation}\label{eq:sr-dynamics}
    x_{k+1}=Ax_k+Bu_k,
    \quad
    A=r\begin{bmatrix}I&R\\0&0\end{bmatrix},
    \quad
    B=\begin{bmatrix}-S\\I_m\end{bmatrix}.
\end{equation}
\DIFaddbegin \DIFadd{The factor $r$ in \eqref{eq:sr-dynamics} does not represent physical loss.  Let $\widetilde x_k$ and $\widetilde u_k$ satisfy the dynamics obtained from \eqref{eq:sr-dynamics} by setting $r=1$, and consider the discounted cost $\sum_{k=0}^{\infty}\beta^k\widetilde s_k^\top\widetilde s_k$.  The change of variables $x_k=r^k\widetilde x_k$ and $u_k=r^{k+1}\widetilde u_k$ gives \eqref{eq:sr-dynamics} and converts the discounted cost into $\sum_{k=0}^{\infty}s_k^\top s_k$.
}\DIFaddend We use the singular stage cost $x_k^\top Qx_k$, where $Q=\left[\begin{smallmatrix}I_N&0\\0&0\end{smallmatrix}\right]$.  Equivalently, the objective is to minimize $\sum_{k=0}^{\infty} s_k^\top s_k$ subject to the dynamics in \eqref{eq:sr-dynamics}.
\begin{assumption}\label{ass:data}
The graph $G$ is a connected directed tree.
\end{assumption}

No restriction is placed on the edge orientation.  In particular, several directed edges may have the same receiving vertex.

\DIFaddbegin \DIFadd{Assumption~\ref{ass:data} describes radial transportation networks, such as distribution trees and acyclic supply chains.  It excludes cycles, which require incidence coordinates and arguments different from those used for trees.
}

\DIFadd{Proofs of all formal statements are collected at the end of the paper.
}

\DIFaddend \begin{lemma}\label{lem:tree-incidence-rank-Z}
Under Assumption~\ref{ass:data}, $m=N-1$, $\rank C=N-1$, and the matrix
\begin{equation}\label{eq:Z-definition}
    Z=[C\ \ones]\in\R^{N\times N}
\end{equation}
is nonsingular.
\end{lemma}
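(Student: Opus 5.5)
The plan is to prove the three claims in order, using only that the underlying undirected graph of $G$ is a tree. Edge orientation only flips the sign of a column of $C$. That affects neither rank nor the nonsingularity of $Z$, so orientation can be ignored throughout.

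\emph{Edge count.} The identity $m=N-1$ is the standard fact that a connected acyclic graph on $N$ vertices has $N-1$ edges. If a self-contained argument is wanted, I would use induction on $N$. A finite tree with $N\ge 2$ has a leaf, since the endpoint of a maximal undirected path has degree one. Removing that leaf and its unique edge leaves a tree on $N-1$ vertices.

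\emph{Rank of $C$.} First, $\ones^\top C=0$, because each column contains exactly one $+1$ and one $-1$. Hence $\rank C\le N-1$. For the reverse inequality I would show that the columns of $C$ are linearly independent; since $m=N-1$, this gives $\rank C=N-1$. Suppose $Cy=0$ with $y\in\R^m$. Take a leaf $v$ and its unique incident edge $e_i$. Row $v$ of $C$ has a single nonzero entry, $\pm1$ in column $i$, so $(Cy)_v=\pm y_i$ and therefore $y_i=0$. Deleting $v$ and $e_i$ leaves the incidence matrix of a smaller tree acting on the remaining entries of $y$. Induction then gives $y=0$. An equivalent route is to show $\ker C^\top=\operatorname{span}\{\ones\}$. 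If $C^\top z=0$, then $z$ takes equal values at the two endpoints of every edge, and connectivity forces $z$ to be constant. This already yields $\rank C=N-1$ without needing the edge count.

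\emph{Nonsingularity of $Z$.} The matrix $Z=[C\ \ones]$ is square because $m+1=N$. Suppose $Z\begin{bmatrix}y\\ \alpha\end{bmatrix}=Cy+\alpha\ones=0$. Multiplying on the left by $\ones^\top$ and using $\ones^\top C=0$ gives $\alpha N=0$, so $\alpha=0$. Then $Cy=0$, and the independence of the columns of $C$ gives $y=0$. Hence $Z$ is nonsingular.

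I expect no real obstacle here. The only step needing care is the leaf-removal induction, or alternatively the connectivity argument for $\ker C^\top$. The thing to check is that deleting a leaf keeps the graph a connected tree and leaves the remaining columns of $C$ unchanged apart from the deleted row.
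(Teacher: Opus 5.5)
Your proof is correct and matches the paper's. The paper uses your alternative route: it shows $\ker C^\top=\operatorname{span}\{\ones\}$ by connectivity, then applies rank--nullity together with $m=N-1$ to get independent columns, and it uses the identical $\ones^\top$-multiplication argument for $Z$. Your primary leaf-removal argument for column independence is also valid, and it is the same induction the paper later uses for $M_z$ in Lemma~\ref{lem:tree-edge-column-rank}.
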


We also use the following additional rank condition.

\begin{assumption}\label{ass:outward-rooted}
In addition to Assumption~\ref{ass:data}, the receiver matrix has full column rank, $\rank R=m$.
\end{assumption}

\DIFaddbegin \DIFadd{Assumption~\ref{ass:outward-rooted} describes radial networks in which every non-root storage location has one upstream supply edge.  It excludes merging orientations in which two transportation edges enter the same storage vertex.  Results stated only under Assumption~\ref{ass:data} continue to allow such orientations.
}

\DIFaddend \begin{lemma}\label{lem:outward-rooted-orientation}
Under Assumption~\ref{ass:outward-rooted}, no two directed edges have the same receiving vertex.  There is a unique vertex $v_{\rm root}$ with no incoming edge, and every edge is directed away from $v_{\rm root}$.
\end{lemma}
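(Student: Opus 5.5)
The plan is to read the rank condition combinatorially.

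\emph{Step 1: distinct receivers.} Each column of $R$ is a standard basis vector $\hat e_v$, where $v$ is the receiver of that edge. If two distinct edges $e_i\neq e_j$ had the same receiver, columns $i$ and $j$ of $R$ would coincide. Then $R(\hat e_i-\hat e_j)=0$, which contradicts $\rank R=m$. So the receiver map $E\to V$ is injective.

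\emph{Step 2: a unique root.} By Lemma~\ref{lem:tree-incidence-rank-Z}, $m=N-1$. Since the receiver map is injective, exactly $N-1$ vertices are receivers, each of exactly one edge. Hence exactly one vertex receives no edge; call it $v_{\rm root}$. Every other vertex has in-degree exactly one.

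\emph{Step 3: orientation away from the root.} Let $v\neq v_{\rm root}$. Since $G$ is a tree, there is a unique undirected path $v_{\rm root}=w_0,w_1,\ldots,w_\ell=v$. I will show by induction on $j$ that the edge joining $w_{j-1}$ and $w_j$ is directed from $w_{j-1}$ to $w_j$.

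For $j=1$: $w_0$ has no incoming edge, so the edge between $w_0$ and $w_1$ must point to $w_1$.

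Inductive step: assume the edge between $w_{j-2}$ and $w_{j-1}$ points into $w_{j-1}$. Because $w_{j-1}$ has in-degree one, no other edge can enter it. So the edge between $w_{j-1}$ and $w_j$ must leave $w_{j-1}$, that is, point to $w_j$.

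Every edge of the tree lies on the path from $v_{\rm root}$ to its farther endpoint. Therefore every edge is directed away from $v_{\rm root}$, in the sense that its sender is the endpoint closer to the root.

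Uniqueness of the root follows directly from the counting in Step 2. I expect no real obstacle here. The only care needed is to state precisely what "directed away" means, namely sender closer to the root along the unique undirected path, and to invoke $m=N-1$ from Lemma~\ref{lem:tree-incidence-rank-Z}.
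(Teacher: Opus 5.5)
Your proposal is correct and follows the paper's proof essentially step for step. The paper likewise uses equal columns of $R$ to contradict full column rank, then $m=N-1$ from Lemma~\ref{lem:tree-incidence-rank-Z} to obtain the unique root, and finally induction along the undirected path from $v_{\rm root}$ to an arbitrary vertex to fix every edge orientation.
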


\begin{definition}\label{def:rooted-tree-notation}
Suppose the transportation graph $G=(V,E)$ satisfies Assumption~\ref{ass:outward-rooted}, and let $v_{\rm root}$ be the vertex from Lemma~\ref{lem:outward-rooted-orientation}.  For each vertex $v\ne v_{\rm root}$, let $p(v)$ be its parent.  Let $T_v$ be the subtree rooted at $v$, consisting of $v$ and all its descendants, and let $\chi_v\in\R^N$ be the indicator vector of $T_v$.
\end{definition}
\section{DARE\DIFdelbegin \DIFdel{and generalized eigenvalue pencil}\DIFdelend }\label{sec:riccati-pencil}

For matrices satisfying Assumption~\ref{ass:data}, consider the DARE
\begin{equation}\label{eq:dare}
    P
    =
    Q+A^\top P A
    -A^\top P B\,(B^\top P B)^{-1}B^\top P A.
\end{equation}
A symmetric positive semidefinite solution $P$ is called stabilizing if $B^\top P B$ is nonsingular and the closed-loop matrix \(A-BK, \qquad K=(B^\top P B)^{-1}B^\top P A\),
has spectral radius strictly smaller than one.

\begin{theorem}\label{thm:sender-receiver-dare-unique}
Under Assumption~\ref{ass:data}, the DARE \eqref{eq:dare} has a unique positive semidefinite stabilizing solution.
\end{theorem}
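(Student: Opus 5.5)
The plan is to obtain existence from the LQ value function and uniqueness from a standard difference identity for two stabilizing solutions. The tree structure enters only to show that $B^\top PB$ is nonsingular and that optimal trajectories converge to zero.

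\textbf{Existence of a candidate.} The starting observation is that $A$ in \eqref{eq:sr-dynamics} is block upper triangular with diagonal blocks $rI_N$ and $0$. Hence its spectral radius is $r=\sqrt\beta<1$, and $(A,B)$ is trivially stabilizable, since $u\equiv 0$ already gives finite cost.

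Define $P$ as the infinite-horizon value function $x_0^\top Px_0=\inf\sum_k s_k^\top s_k$. Equivalently, $P$ is the monotone limit of the value iteration started from $P_0=0$. Because the cost of $u\equiv0$ bounds the iterates, $P$ is finite, symmetric and positive semidefinite.

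To pass from the Bellman equation to \eqref{eq:dare} in the form with $(B^\top PB)^{-1}$, I need $B^\top PB\succ0$. This is the first structural step.

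\textbf{Key step: $B^\top PB\succ0$.} Suppose $u_0\neq0$ with $(Bu_0)^\top P(Bu_0)=0$, starting from $x_0=0$. Then there are inputs making $\sum_{k\ge1}s_k^\top s_k$ arbitrarily small. Working with finite horizons, the minimizers are attained, and a compactness argument handles the limit.

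In the limit this forces $s_1=-Su_0=0$ and $rRu_j=Su_{j+1}$ for all $j$. Summing entries gives $\ones^\top u_{j+1}=r\,\ones^\top u_j$. I then peel leaves of the tree. At a leaf vertex only one edge is incident, so the corresponding scalar equations force that edge's input component to vanish or to grow like $r^{-j}$. Growth like $r^{-j}$ is incompatible with finite cost, or with boundedness of a minimizing sequence.

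Removing the leaf and inducting over the tree, which Lemma~\ref{lem:tree-incidence-rank-Z} keeps connected with $Z$ nonsingular, gives $u_0=0$. This contradicts $u_0\neq0$.

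\textbf{Stabilizing property.} With $K=(B^\top PB)^{-1}B^\top PA$, the closed-loop optimal trajectories satisfy $\sum_k s_k^\top s_k<\infty$, so $s_k\to0$.

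I then show that $d_k\to0$ and $u_k\to0$ as well. The storage equation $s_{k+1}=rs_k+rRd_k-Su_k$ with $d_{k+1}=u_k$ gives $rRd_k-Sd_{k+1}\to0$. The same leaf-peeling argument, now applied to this asymptotic relation, shows that each edge component either decays or grows geometrically like $r^{-k}$. Growth is excluded because the cost is finite, so $x_k\to0$ for every initial state. Since the closed loop is linear and finite-dimensional, every trajectory tending to zero implies that $A-BK$ has spectral radius less than one.

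\textbf{Uniqueness.} Let $P_1,P_2$ be two positive semidefinite stabilizing solutions with gains $K_1,K_2$. The standard algebra, which uses only the invertibility of $B^\top P_iB$ and not a positive input weight, gives
\[
P_1-P_2=(A-BK_1)^\top(P_1-P_2)(A-BK_2).
\]
Iterating this identity and using that both closed-loop matrices have spectral radius less than one yields $P_1=P_2$.

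\textbf{Main obstacle.} I expect the main difficulty to be the singular cost, meaning $Q$ only weights $s$ and there is no input weight. Because of this, both $B^\top PB\succ0$ and closed-loop stability must come from the tree structure rather than from textbook detectability assumptions. The leaf-peeling induction on the relation $rRu_j\approx Su_{j+1}$ is the step I would write out most carefully, first for a single edge and then for a general tree.
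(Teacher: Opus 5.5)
Your argument is correct in substance, but it takes a different route from the paper. The paper never constructs $P$. It applies Theorem~1 of Hansson and Hagander on semidefinite DAREs with singular input weight. It checks that the cost matrix is positive semidefinite and that $(A,B)$ is stabilizable. It then verifies the invariant-zero condition that $\left[\begin{smallmatrix}A-zI&B\\Q&0\end{smallmatrix}\right]$ has full column rank for $|z|\ge1$. That condition reduces to $(rR-zS)d=0\Rightarrow d=0$, which is Lemma~\ref{lem:tree-edge-column-rank}, a frequency-domain leaf peeling. Existence, invertibility of $B^\top PB$, stabilization and uniqueness all then come from the cited theorem.

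You replace the citation with a self-contained time-domain proof:
\begin{itemize}
\item the value function supplies the candidate;
\item time-domain leaf peeling replaces the rank test, once for $B^\top PB\succ0$ and once for $s_k\to0\Rightarrow x_k\to0$;
\item uniqueness comes from $P_1-P_2=(A-BK_1)^\top(P_1-P_2)(A-BK_2)$.
\end{itemize}
That identity does hold without an input weight, because $(A-BK_1)^\top P_1B=0$ and $B^\top P_2(A-BK_2)=0$ follow from the definitions of $K_1,K_2$ alone. Your route is longer but elementary, and it also identifies $P$ as the optimal cost. The paper's route is short but rests on an external theorem.

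A few details should be fixed; none is fatal.

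\textbf{Compactness.} The ``compactness argument'' is not justified as stated. When two edges share a sender, $S$ lacks full column rank, so near-optimal inputs need not be bounded. You do not need it. The $T$-step optimal cost from $Bu_0$ is at most $(Bu_0)^\top P(Bu_0)=0$ and is attained. So any $T\ge m$ gives the exact relations $Su_0=0$ and $Su_{j+1}=rRu_j$ for $j<T$, and each leaf removal shortens the usable horizon by at most one. Alternatively, you can peel the approximate relations one time index at a time.

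\textbf{The growth dichotomy.} The alternative ``vanish or grow like $r^{-j}$'' never arises. The row of a leaf involves one edge at one time index, with a nonzero coefficient: $1$ via $S$ for a sender-only leaf, $r$ via $R$ for a receiver-only leaf. That component therefore vanishes outright, or tends to zero in the asymptotic stability step. No appeal to finite cost is needed to exclude growth.

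\textbf{Minor points.} Lemma~\ref{lem:tree-incidence-rank-Z} plays no role in removing a leaf. Identifying $P$ with the limit of value iteration from $P_0=0$ is an unproved aside that you do not need. Quadraticity of the value function follows, for example, by writing it as a squared $\ell^2$-distance from a point depending linearly on $x_0$ to a fixed linear subspace.
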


\DIFdelbegin \DIFdel{The generalized eigenvalue problem associated with \eqref{eq:dare} is the extended DARE pencil
}\DIFdel{
    \mathcal H-\lambda \mathcal J,
}
\DIFdel{where
}
\DIFdel{\mathcal H=\begin{bmatrix}
        A&0&B\\
        -Q&I&0\\
        0&0&0
    \end{bmatrix},
    \qquad
\mathcal J=\begin{bmatrix}
        I&0&B\\
        0&A^\top&0\\
        0&-B^\top&0
    \end{bmatrix}.
}
\DIFdel{Here the row and column blocks have sizes $(n,n,m)$ with $n=N+m$.  A matrix $W$ spans a right deflating subspace of the pencil if there is a reduced matrix $L$ such that \(\mathcal H W=\mathcal J W L\).
Such a deflating subspace is called stable if all generalized eigenvalues represented by the reduced matrix $L$ lie strictly inside the unit disk, equivalently if $\rho(L)<1$.
Write a stable deflating-subspace basis in block form as
}
    \DIFdel{W=\begin{bmatrix}X\\Y\\U\end{bmatrix},
    \qquad X,Y\in\R^{n\times n},\quad U\in\R^{m\times n}.
}
\DIFdel{The direction used in this paper is the following standard construction.  If $P$ is a stabilizing solution of \eqref{eq:dare} and $Y\in\R^{n\times n}$ is any nonsingular matrix, then, with \(X=PY, \qquad U=-KY\),
the columns of $W=[X^\top\ Y^\top\ U^\top]^\top$ span the stabilizing deflating subspace of the pencil \eqref{eq:pencil}.  In this sense, the denominator block $Y$ may be chosen after the stabilizing Riccati solution is known.
Below, for the sender--receiver problem, we choose a graph-adapted block $Y$ and then record that this choice is nonsingular and gives the corresponding stabilizing deflating-subspace basis.
}\DIFdelend 

\DIFdelbegin \DIFdel{Conversely, recovering a stabilizing DARE solution from a computed deflating subspace requires additional regularity, in particular a nonsingular denominator block and the usual stabilizing spectral separation assumptions.  Under those hypotheses, the first two block rows satisfy \(X=PY\)
and the solution of \eqref{eq:dare} is recovered as \(P=XY^{-1}\).
With the convention $u_k=-Kx_k$, the input block satisfies $U=-KY$, so the feedback is recovered from the same basis by \(K=-UY^{-1}\).
This converse subspace-to-Riccati correspondence is standard in generalized-eigenvalue treatments of DAREs; see, for example, van Dooren~\cite{VanDooren1981} or the account in \cite{BittantiLaubWillems1991}.  The generalized Schur method computes an orthogonal basis for this stable deflating subspace; the central question here is whether the same subspace has a sparse nonorthogonal basis $W$ adapted to the graph.
}

\DIFdelend \section{Sender--receiver Riccati reduction}\label{sec:general-B-framework}

\DIFdelbegin \DIFdel{Write a candidate Riccati matrix in storage-delay block form as
}\DIFdelend \DIFaddbegin \DIFadd{Conformably with the storage-delay partition of the state in \eqref{eq:sr-dynamics}, write a candidate Riccati matrix as
}\DIFaddend %
\begin{equation}\label{eq:P-block-partition}
    P=\begin{bmatrix}P_1&P_{12}\\P_{12}^\top&P_2\end{bmatrix}.
\end{equation}
For the sender--receiver data in \eqref{eq:sr-dynamics}, the denominator in \eqref{eq:dare} is
\begin{equation}\label{eq:general-B-denominator}
\begin{aligned}
    B^\top P B
    &=S^\top P_1 S-S^\top P_{12}-P_{12}^\top S+P_2.
\end{aligned}
\end{equation}
The numerator in the feedback is
\[
\begin{aligned}
    B^\top P A
    &=r\begin{bmatrix}
        -S^\top P_1+P_{12}^\top & -S^\top P_1 R+P_{12}^\top R
    \end{bmatrix}.
\end{aligned}
\]

\subsection{A direct Riccati reduction}\label{subsec:direct-riccati-reduction}

\DIFdelbegin \DIFdel{Constructing the sender--receiver deflating-subspace basis directly is delicate.  We therefore }\DIFdelend \DIFaddbegin \DIFadd{We }\DIFaddend first reduce the DARE \eqref{eq:dare}, and \DIFdelbegin \DIFdel{only later }\DIFdelend \DIFaddbegin \DIFadd{then }\DIFaddend ask whether the resulting \DIFdelbegin \DIFdel{solution }\DIFdelend \DIFaddbegin \DIFadd{optimal feedback }\DIFaddend admits a sparse \DIFdelbegin \DIFdel{denominator basis}\DIFdelend \DIFaddbegin \DIFadd{RI implementation}\DIFaddend .

Using the partitioning in \eqref{eq:P-block-partition}, define
\begin{align}
    D(P)&=S^\top P_1S-S^\top P_{12}-P_{12}^\top S+P_2,
        \label{eq:sr-DP}\\
    N(P)&=P_{12}-P_1S,
        \label{eq:sr-NP}\\
    H(P)&=P_1-N(P)D(P)^{-1}N(P)^\top,
        \label{eq:sr-H-of-P}
\end{align}
whenever $D(P)$ is nonsingular.
Using \eqref{eq:dare} and the definitions \eqref{eq:sr-DP}--\eqref{eq:sr-H-of-P}, the DARE \eqref{eq:dare} is equivalent to
\begin{equation}\label{eq:sr-P-from-H}
    P=\begin{bmatrix}I&0\\0&0\end{bmatrix}
      +\beta
      \begin{bmatrix}I\\R^\top\end{bmatrix}
      H(P)
      \begin{bmatrix}I&R\end{bmatrix}.
\end{equation}
Equating the blocks in \eqref{eq:sr-P-from-H} shows that any solution must have the block form
\begin{equation}\label{eq:sr-P-block-form}
    P_1=I+\beta H,
    \qquad
    P_{12}=\beta HR,
    \qquad
    P_2=\beta R^\top HR,
\end{equation}
for some symmetric matrix $H\in\R^{N\times N}$.
For this matrix $H$, define the two auxiliary matrices
\begin{align}
    D&=S^\top S+\beta C^\top H C,
        \label{eq:sr-DH}\\
    N&=-S+\beta HC.
        \label{eq:sr-NH}
\end{align}
Substituting \eqref{eq:sr-P-block-form} into \eqref{eq:sr-DP}--\eqref{eq:sr-H-of-P} and using $C=R-S$ gives $D(P)=D$, $N(P)=N$, and $H(P)=I+\beta H-ND^{-1}N^\top$.  Equating $H(P)$ and $H$ gives
\begin{equation}\label{eq:sr-reduced-H-DARE}
    H=I+\beta H-ND^{-1}N^\top,
\end{equation}
or, after substituting \eqref{eq:sr-DH} and \eqref{eq:sr-NH} into \eqref{eq:sr-reduced-H-DARE}, equivalently
{\small
\begin{equation}\label{eq:sr-reduced-H-DARE-expanded}
\begin{aligned}
    H={}&I+\beta H 
      -(-S+\beta HC)\\
      &\quad\times(S^\top S+\beta C^\top H C)^{-1}
        (-S^\top+\beta C^\top H).
\end{aligned}
\end{equation}
}
We call \eqref{eq:sr-reduced-H-DARE}--\eqref{eq:sr-reduced-H-DARE-expanded} the reduced DARE.
For a solution $H$ of the reduced DARE with $D$ in \eqref{eq:sr-DH} nonsingular, define the reduced closed-loop matrix
\begin{equation}\label{eq:sr-reduced-closed-loop}
    A_H=r\bigl(I-CD^{-1}N^\top\bigr)
       =rI-rCD^{-1}N^\top,
\end{equation}
and call $H$ stabilizing if every eigenvalue of \eqref{eq:sr-reduced-closed-loop} lies in the open unit disk.

\begin{lemma}\label{lem:full-reduced-dare-equivalence}
Let $H=H^\top$ and suppose $D$ in \eqref{eq:sr-DH} is nonsingular.  Define $P$ by \eqref{eq:sr-P-block-form}.  Then $H$ satisfies the reduced DARE \eqref{eq:sr-reduced-H-DARE-expanded} if and only if $P$ satisfies the original DARE \eqref{eq:dare}.  Moreover, $H$ is stabilizing if and only if $P$ is stabilizing for \eqref{eq:dare}.
\end{lemma}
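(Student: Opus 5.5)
The plan is to compute the DARE right-hand side directly for $P$ of the form \eqref{eq:sr-P-block-form}, and then to relate the closed-loop matrices $A-BK$ and $A_H$ through a block-triangular structure.

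\textbf{Step 1 (DARE equivalence).} First I verify the identities claimed before \eqref{eq:sr-reduced-H-DARE}. With $P_1=I+\beta H$, $P_{12}=\beta HR$, $P_2=\beta R^\top HR$ and $R=C+S$, expanding \eqref{eq:sr-DP} gives
\[
S^\top S+\beta\bigl(S^\top HS-S^\top HR-R^\top HS+R^\top HR\bigr)=S^\top S+\beta C^\top HC=D .
\]
Similarly \eqref{eq:sr-NP} gives $\beta HR-S-\beta HS=N$. Next I note that $B^\top PA=r\,N(P)^\top[I\ R]$ and $A^\top PA=\beta[I\ R]^\top P_1[I\ R]$. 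Hence the right-hand side of \eqref{eq:dare} equals
\[
Q+\beta\begin{bmatrix}I\\R^\top\end{bmatrix}\bigl(P_1-N D^{-1}N^\top\bigr)\begin{bmatrix}I&R\end{bmatrix}.
\]
This is exactly \eqref{eq:sr-P-from-H} with $H(P)=I+\beta H-ND^{-1}N^\top$. Note that $B^\top PB=D(P)=D$ is nonsingular by hypothesis.

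\emph{If direction.} If $H$ solves \eqref{eq:sr-reduced-H-DARE}, the right-hand side becomes $Q+\beta[I\ R]^\top H[I\ R]$, which equals $P$ block by block.

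\emph{Only-if direction.} If $P$ solves \eqref{eq:dare}, compare the $(1,1)$ blocks: $I+\beta H=I+\beta H(P)$. Since $\beta>0$, this gives $H=H(P)$, which is the reduced DARE.

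\textbf{Step 2 (stabilizing equivalence).} The gain is
\[
K=D^{-1}B^\top PA=rD^{-1}N^\top[I\ R].
\]
I then compute
\[
A-BK=r\begin{bmatrix}I+SD^{-1}N^\top & R+SD^{-1}N^\top R\\ -D^{-1}N^\top & -D^{-1}N^\top R\end{bmatrix}.
\]
The key observation is that $A-BK=M[I\ R]$ with $M=r\begin{bmatrix}I+SD^{-1}N^\top\\-D^{-1}N^\top\end{bmatrix}$. The nonzero eigenvalues of $M[I\ R]$ coincide, with multiplicities, with those of
\[
[I\ R]M=r\bigl(I+SD^{-1}N^\top-RD^{-1}N^\top\bigr)=r\bigl(I-CD^{-1}N^\top\bigr)=A_H .
\]
The remaining eigenvalues of $A-BK$ are zero, because its rank is at most $N$. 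Therefore $\rho(A-BK)=\rho(A_H)$.

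Finally, $P$ is positive semidefinite whenever $H$ is, since $P=Q+\beta[I\ R]^\top H[I\ R]$. Conversely, $H$ is a principal-block-derived quantity: $\beta H=P_1-I$. So I will read "stabilizing" for $P$ as in the definition (PSD, $B^\top PB$ nonsingular, spectral radius less than one). The semidefiniteness part may need an extra remark. I would argue that if $P$ is PSD and stabilizing then $H=H(P)$ is a Schur-complement-type quantity. Indeed, $H(P)=P_1-N D^{-1}N^\top$ is the Schur complement of the PSD matrix $[I\ {-S};0\ I]^\top P\,[\cdot]$ with respect to its $D$ block. Nonnegativity then follows from positivity of $D$.

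\textbf{Main obstacle.} The main obstacle is this semidefiniteness bookkeeping and ensuring the statement's notion of "stabilizing" for $H$ (eigenvalues only) matches. The spectral identity via $\mathrm{eig}(MJ)\setminus\{0\}=\mathrm{eig}(JM)\setminus\{0\}$ is the core step and is routine.
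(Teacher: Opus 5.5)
Your Steps 1 and 2 are correct and are essentially the paper's proof. The DARE equivalence rests on the block identities $D(P)=D$, $N(P)=N$, $H(P)=I+\beta H-ND^{-1}N^\top$. Your explicit check of $A^\top PA$ and $B^\top PA$, and your comparison of $(1,1)$ blocks for the only-if direction, just make the paper's terse argument explicit. Your factorization $A-BK=MJ$ with $J=\begin{bmatrix}I&R\end{bmatrix}$ and $\det(\lambda I-MJ)=\lambda^m\det(\lambda I-JM)$ is the same observation as the paper's coordinate change $(s,d)\mapsto(w,d)$, $w=s+Rd$. That change makes the closed loop block lower triangular with diagonal blocks $A_H$ and $0$.

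The one place your proposal goes wrong is the semidefiniteness remark, which the paper's proof does not address at all. Since $H$ is called stabilizing purely spectrally, the implication that needs extra work is $\rho(A_H)<1\Rightarrow P\succeq0$.

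\begin{itemize}
\item Your observation that $P\succeq0$ whenever $H\succeq0$ does not give this, because $H\succeq0$ is not a hypothesis.
\item Your Schur-complement argument is correct in itself, since $T^\top PT=\left[\begin{smallmatrix}P_1&N\\N^\top&D\end{smallmatrix}\right]$ for $T=\left[\begin{smallmatrix}I&-S\\0&I\end{smallmatrix}\right]$. But it proves the converse $P\succeq0\Rightarrow H\succeq0$, which the statement never uses.
\end{itemize}

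The missing step is the closed-loop Lyapunov identity. With $K=D^{-1}B^\top PA$ one has $(A-BK)^\top P(A-BK)=A^\top PA-A^\top PBD^{-1}B^\top PA$, so the DARE reads $P=Q+(A-BK)^\top P(A-BK)$. Then $\rho(A-BK)<1$ forces $P=\sum_{k\ge0}\bigl((A-BK)^k\bigr)^\top Q(A-BK)^k\succeq0$.

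Equivalently, the reduced DARE can be written as $H=(I+SD^{-1}N^\top)^\top(I+SD^{-1}N^\top)+A_H^\top HA_H$. This gives $H\succeq0$ when $\rho(A_H)<1$, after which your first remark applies.

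With this added, your proof covers the semidefiniteness clause in the definition of a stabilizing $P$, which the paper leaves implicit.
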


Using \eqref{eq:dare}, \eqref{eq:sr-DH}, \eqref{eq:sr-NH}, and Lemma~\ref{lem:full-reduced-dare-equivalence}, the feedback associated with a stabilizing solution $H$ of the reduced DARE is
\[
    u_k=-Kx_k,
    \qquad
    K=rD^{-1}N^\top\begin{bmatrix}I&R\end{bmatrix},
\]
with the sign convention of \eqref{eq:dare}.  Equivalently,
\begin{equation}\label{eq:sr-feedback-from-H-state}
    u_k=-rD^{-1}(-S^\top+\beta C^\top H)(s_k+Rd_k).
\end{equation}

This reduction is useful for two reasons.  First, the unknown matrix is $H\in\R^{N\times N}$ rather than the full $(N+m)\times(N+m)$ matrix $P$, whose blocks are fixed by the sender--receiver dynamics.  Second, all sender--receiver dependence enters through $S^\top S$ and $C^\top H C$.  Lemma~\ref{lem:full-reduced-dare-equivalence} shows that the stabilizing reduced solution recovers the stabilizing solution of the original DARE through \eqref{eq:sr-P-block-form}.  The remaining question is whether the feedback in \eqref{eq:sr-feedback-from-H-state} has a sparse RI implementation.

\begin{lemma}\label{lem:reduced-dare-existence-conditions}
Under Assumption~\ref{ass:data}, the reduced DARE \eqref{eq:sr-reduced-H-DARE-expanded} has a unique stabilizing positive semidefinite solution $H$.  For this solution, the matrix $D$ in \eqref{eq:sr-DH} is nonsingular.
\end{lemma}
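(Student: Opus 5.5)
The plan is to transfer existence, uniqueness and the regularity of $D$ from the full DARE to the reduced one. The tools are Theorem~\ref{thm:sender-receiver-dare-unique} and Lemma~\ref{lem:full-reduced-dare-equivalence}, linked by the block structure \eqref{eq:sr-P-block-form}.

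\emph{Existence.} By Theorem~\ref{thm:sender-receiver-dare-unique}, there is a unique positive semidefinite stabilizing solution $P$ of \eqref{eq:dare}. Stabilizing includes that $B^\top PB$ is nonsingular. Since $P$ solves \eqref{eq:dare}, the computation preceding \eqref{eq:sr-P-from-H} shows that $P$ satisfies \eqref{eq:sr-P-from-H} with $H:=H(P)$, so $P$ has the block form \eqref{eq:sr-P-block-form}. This $H$ is symmetric. Substituting the block form into \eqref{eq:sr-DP} gives $D=D(P)=B^\top PB$, which is nonsingular. Lemma~\ref{lem:full-reduced-dare-equivalence} then shows that $H$ solves the reduced DARE and is stabilizing, and we already have $D$ nonsingular.

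\emph{Positive semidefiniteness.} From $P_1=I+\beta H$ alone we only get $H\succeq -\beta^{-1}I$, so I would argue differently. The identity $H=H(P)=P_1-N(P)D(P)^{-1}N(P)^\top$ exhibits $H$ as the Schur complement of the positive definite matrix $D(P)=B^\top PB$ in the positive semidefinite matrix
\[
\begin{bmatrix} I&-S\\0&I\end{bmatrix}^{\!\top}\!\! P\begin{bmatrix} I&-S\\0&I\end{bmatrix}
\]
(suitably permuted). Indeed, its $(1,1)$ block is $P_1$, its off-diagonal block is $P_{12}-P_1S=N(P)$, and its $(2,2)$ block is $D(P)$. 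A Schur complement of a PSD matrix with respect to a positive definite block is PSD, so $H\succeq 0$. Checking this congruence is the only real computation in the proof.

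\emph{Uniqueness.} Suppose $H'$ is another stabilizing PSD solution of the reduced DARE. In the lemma's formulation, stabilizing requires $D$ to be nonsingular. Define $P'$ by \eqref{eq:sr-P-block-form}. Then $P'\succeq 0$, because it equals $\operatorname{diag}(I,0)+\beta [I\ R]^\top H'[I\ R]$ with $H'\succeq 0$. By Lemma~\ref{lem:full-reduced-dare-equivalence}, $P'$ is a stabilizing solution of \eqref{eq:dare}, so Theorem~\ref{thm:sender-receiver-dare-unique} gives $P'=P$. Since $P_1=I+\beta H$ determines $H$, we conclude $H'=H$.

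The main obstacle I expect is pinning down that "stabilizing" for $H$ carries the nonsingularity of $D$, so that Lemma~\ref{lem:full-reduced-dare-equivalence} applies. The definition of $A_H$ in \eqref{eq:sr-reduced-closed-loop} presupposes $D$ invertible, so this should hold by definition.
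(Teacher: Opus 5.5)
Your proof is correct, but it takes a different route from the paper's. The paper does not pass through the full DARE. It rewrites \eqref{eq:sr-reduced-H-DARE-expanded} as a Riccati equation with cross weighting, $A_r=rI$, $B_r=rC$, $Q_r=I$, $L_r=-S$, $R_r=S^\top S$, and applies the Hansson--Hagander theorem a second time. It checks three hypotheses:
\begin{itemize}
\item the joint weight $\left[\begin{smallmatrix}I\\-S^\top\end{smallmatrix}\right]\left[\begin{smallmatrix}I&-S\end{smallmatrix}\right]$ is positive semidefinite;
\item $(A_r,B_r)$ is stabilizable because $\rho(A_r)=r<1$;
\item the rank condition holds, which after eliminating $x=Su$ becomes $(rC+(r-z)S)u=0$ and is settled by Lemma~\ref{lem:tree-edge-column-rank}.
\end{itemize}
Existence, uniqueness and invertibility of $R_r+B_r^\top HB_r=D$ are then read off from that theorem.

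You instead obtain the lemma as a corollary of Theorem~\ref{thm:sender-receiver-dare-unique} and Lemma~\ref{lem:full-reduced-dare-equivalence}. You handle the one new ingredient, $H\succeq0$, correctly. The congruence by the matrix with block columns $\left[\begin{smallmatrix}I\\0\end{smallmatrix}\right]$ and $B$ produces the blocks $P_1$, $N(P)$, $D(P)$. Hence $H(P)$ is the Schur complement of $D(P)=B^\top PB\succ0$ in a positive semidefinite matrix, and no permutation is needed. In the uniqueness step you also rightly supply $P'\succeq0$ separately. This matters because the proof of Lemma~\ref{lem:full-reduced-dare-equivalence} only compares closed-loop spectra, while the paper's notion of a stabilizing $P$ includes semidefiniteness. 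The invertibility of $D$ for a competing $H'$, which you flag as the main obstacle, is automatic, since \eqref{eq:sr-reduced-H-DARE-expanded} itself contains $D^{-1}$.

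Your route is shorter. It avoids a second external appeal and a second rank check, and it gives $D\succ0$ directly. The paper's route is independent of the full DARE. It also identifies the reduced equation as the Riccati equation of the cross-weighted LQ problem $\min\sum_t\|z_t-Su_t\|^2$, $z_{t+1}=r(z_t+Cu_t)$. That is exactly the problem whose value is computed in the proof of Theorem~\ref{thm:explicit-height-solution}.
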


\subsection{An RI \DIFdelbegin \DIFdel{basis }\DIFdelend \DIFaddbegin \DIFadd{implementation }\DIFaddend after $H$ is known}\label{subsec:known-H-denominator-basis}

\DIFaddbegin \begin{definition}\label{def:ri-implementation}
\DIFadd{An RI implementation of the feedback $u=-Kx$ is a pair of matrices $(Y,U)$ such that $Y$ is nonsingular and $K=-UY^{-1}$.  Online evaluation consists of solving $Y\eta=x$ and then computing $u=U\eta$.
}\end{definition}

\DIFaddend Assume in this subsection that a stabilizing solution $H$ of the reduced DARE \eqref{eq:sr-reduced-H-DARE-expanded} is known.  By Lemma~\ref{lem:full-reduced-dare-equivalence}, the corresponding matrix $P$ in \eqref{eq:sr-P-block-form} is the stabilizing solution of the original DARE \eqref{eq:dare}.  Equation~\eqref{eq:sr-feedback-from-H-state} shows that the state $x_k=(s_k,d_k)$ enters the optimal feedback only through the storage-prediction variable \(w_k=s_k+Rd_k\).
Use the nonsingular matrix $Z$ defined in \eqref{eq:Z-definition} as the coordinate matrix for $w_k$.
Define the denominator block
\begin{equation}\label{eq:sr-known-H-Y}
    Y=
    \begin{bmatrix}
        Z&-R\\
        0&I_m
    \end{bmatrix}.
\end{equation}
Thus writing $x_k=Y\eta_k$ with $\eta_k=(\zeta_k,\delta_k)$ means
\begin{equation}\label{eq:sr-known-H-variables}
    s_k=Z\zeta_k-R\delta_k,
    \qquad
    d_k=\delta_k.
\end{equation}
Then
\begin{equation}\label{eq:sr-known-H-w-representation}
   w_k= s_k+Rd_k=Z\zeta_k.
\end{equation}
The matrix $Y$ is invertible whenever $Z$ is invertible: given $(s_k,d_k)$, first set $\delta_k=d_k$, then solve \(Z\zeta_k=s_k+Rd_k\).
For the sparse implementation question, it remains to derive the input block $U$.
Define
\begin{equation}\label{eq:sr-known-H-F}
    F=-rD^{-1}(-S^\top+\beta C^\top H).
\end{equation}
From \eqref{eq:sr-feedback-from-H-state} and \eqref{eq:sr-known-H-F}, the feedback satisfies $u_k=F w_k$.  Since \eqref{eq:sr-known-H-w-representation} gives $w_k=Z\zeta_k$, and the delay variable $\delta_k$ does not enter $w_k$, the input block is
\begin{equation}\label{eq:sr-known-H-U}
    U=\begin{bmatrix}FZ&0\end{bmatrix}.
\end{equation}
By Lemma~\ref{lem:tree-incidence-rank-Z}, the denominator block $Y$ in \eqref{eq:sr-known-H-Y} is graph-adapted and nonsingular for the tree choice $Z$ in \eqref{eq:Z-definition}: apart from the single mean-storage column, the columns in $C$ are edge-local.  The feedback produced by \eqref{eq:sr-known-H-U} is the same gain as \eqref{eq:sr-feedback-from-H-state}, because \eqref{eq:sr-known-H-w-representation} gives \(U\eta_k=FZ\zeta_k=Fw_k=u_k\).
Thus changing from $w_k$ to $\zeta_k$ changes only the coordinates used to apply the gain, not the optimal feedback law.  The remaining sparsity question is whether the compressed input block $FZ$ is sparse, so that the online input update is a sparse matrix-vector product after the graph-variable solve for $Y$.

\DIFaddbegin \begin{proposition}\label{prop:ri-factorization}
\DIFadd{Under Assumption~\ref{ass:data}, let $H$ be the stabilizing solution of the reduced DARE \eqref{eq:sr-reduced-H-DARE-expanded}.  The matrices $Y$ and $U$ in \eqref{eq:sr-known-H-Y} and \eqref{eq:sr-known-H-U} form an RI implementation in the sense of Definition~\ref{def:ri-implementation}.
}\end{proposition}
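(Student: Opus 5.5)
The plan is to verify the two requirements of Definition~\ref{def:ri-implementation} directly: that $Y$ in \eqref{eq:sr-known-H-Y} is nonsingular, and that $K=-UY^{-1}$, where $K$ is the optimal gain associated with the stabilizing solution. First, Lemma~\ref{lem:reduced-dare-existence-conditions} ensures that the stabilizing solution $H$ exists, is unique, and makes $D$ in \eqref{eq:sr-DH} nonsingular. Hence $F$ in \eqref{eq:sr-known-H-F} and $U$ in \eqref{eq:sr-known-H-U} are well defined. Lemma~\ref{lem:full-reduced-dare-equivalence} then identifies the corresponding $P$ from \eqref{eq:sr-P-block-form} as the stabilizing DARE solution, which by Theorem~\ref{thm:sender-receiver-dare-unique} is the unique one. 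The gain $K$ is therefore the optimal gain, and it is given by $K=rD^{-1}N^\top[I\ R]$.

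Nonsingularity of $Y$ follows from its block upper-triangular structure: $\det Y=\det Z\cdot\det I_m=\det Z$, and this is nonzero by Lemma~\ref{lem:tree-incidence-rank-Z}. I would also write the inverse explicitly as
\[
Y^{-1}=\begin{bmatrix}Z^{-1}&Z^{-1}R\\0&I_m\end{bmatrix}.
\]
This is checked by multiplying: the off-diagonal block is $ZZ^{-1}R-R=0$.

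For the identity $K=-UY^{-1}$, compute
\[
UY^{-1}=[FZ\ \ 0]\,Y^{-1}=[FZZ^{-1}\ \ FZZ^{-1}R]=F[I\ R].
\]
By \eqref{eq:sr-known-H-F} and \eqref{eq:sr-NH}, $F=-rD^{-1}N^\top$, since $N^\top=-S^\top+\beta C^\top H$ because $H$ is symmetric. Therefore $-UY^{-1}=rD^{-1}N^\top[I\ R]=K$, as required.

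The only point requiring care is the symmetry of $H$, which is needed to write $N^\top$ as stated. This follows from $H$ being a positive semidefinite solution, so it is symmetric. There is no real obstacle; the argument is a bookkeeping check of block products.
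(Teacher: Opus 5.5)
Your proof is correct and follows the paper's argument. You get nonsingularity of $Y$ from its block-triangular form and Lemma~\ref{lem:tree-incidence-rank-Z}, and then show $-UY^{-1}=-F[I\ R]=rD^{-1}N^\top[I\ R]=K$; the only cosmetic difference is that you write $Y^{-1}$ explicitly and multiply matrices, whereas the paper checks $U\eta_k=F(s_k+Rd_k)=u_k$ for an arbitrary state $x_k=Y\eta_k$.
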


\DIFaddend \subsection{Four-storage examples}\label{subsec:four-storage-sr-H}

The following four-storage sender--receiver example shows that the compressed matrix $FZ$ can be sparse even when $H$ is dense.  Using the standard basis vectors $\hat e_i$ in $\R^4$, take
\DIFaddbegin
\begin{figure}[t]
\centering
\begin{tikzpicture}[x=1.05cm,y=1.05cm,vertex/.style={circle,draw,fill=white,minimum size=6mm,inner sep=0pt,font=\small},edge/.style={-{Latex[length=2mm]},semithick}]
  \node[font=\small] at (1.8,1.15) {(a) Outward path};
  \node[vertex] (p1) at (0,0) {1};
  \node[vertex] (p2) at (1,0) {2};
  \node[vertex] (p3) at (2,0) {3};
  \node[vertex] (p4) at (3,0) {4};
  \draw[edge] (p2)--(p1);
  \draw[edge] (p3)--(p2);
  \draw[edge] (p4)--(p3);
  \begin{scope}[xshift=5.0cm]
    \node[font=\small] at (0,1.15) {(b) Merging orientation};
    \node[vertex] (m2) at (0,0) {2};
    \node[vertex] (m1) at (-1,0) {1};
    \node[vertex] (m3) at (1,0) {3};
    \node[vertex] (m4) at (0.5,-0.866) {4};
    \draw[edge] (m2)--(m1);
    \draw[edge] (m2)--(m3);
    \draw[edge] (m4)--(m3);
  \end{scope}
\end{tikzpicture}
\caption{The two four-storage examples.  Arrows point from senders to receivers.  The outward path satisfies Assumption~\ref{ass:outward-rooted}; the merging orientation satisfies Assumption~\ref{ass:data} but not Assumption~\ref{ass:outward-rooted}.}
\label{fig:four-storage-graphs}
\end{figure}
\DIFaddend
\begin{equation}\label{eq:four-storage-sr-data}
    R=[\hat e_1\ \hat e_2\ \hat e_3],
    \qquad
    S=[\hat e_2\ \hat e_3\ \hat e_4].
\end{equation}
Since $C=R-S$, the data in \eqref{eq:four-storage-sr-data} describe a graph in which each edge sends material one step upward and receives it at the adjacent upstream vertex.  For the representative value $\beta=1/2$, direct substitution into \eqref{eq:sr-reduced-H-DARE-expanded} gives the solution
\begin{equation}\label{eq:four-storage-sr-H}
    H=\frac{1}{105}
    \begin{bmatrix}
        169&64&29&14\\
        64&64&29&14\\
        29&29&29&14\\
        14&14&14&14
    \end{bmatrix}.
\end{equation}
The matrix in \eqref{eq:four-storage-sr-H} is dense, but it has a nested form: its $(i,j)$ entry depends only on the downstream index $\max\{i,j\}$.  The edge differences in $C^\top H C$ therefore cancel almost completely, and the denominator appearing in the compressed feedback is diagonal in this example, even though $H$ itself is dense.

Substituting $\beta=1/2$ and $r=1/\sqrt2$ into the matrix $F$ from \eqref{eq:sr-known-H-F} gives
\begin{equation}\label{eq:four-storage-sr-FZ}
    FZ=\frac{1}{\sqrt2}
    \begin{bmatrix}
        -1&2/3&0&1/3\\
        0&-1&6/7&4/7\\
        0&0&-1&11/15
    \end{bmatrix}.
\end{equation}
In this sender--receiver example, the known-$H$ construction has a sparse compressed input block.  Equation \eqref{eq:four-storage-sr-FZ} suggests a possible mechanism for the general outward-tree case: $H$ may be dense in vertex variables but still have a hierarchical form whose edge differences make $D$ and $FZ$ sparse.  The task is therefore not to require $H$ itself to be sparse, but to characterize when its incidence compressions $C^\top H C$ and $C^\top HZ$ are sparse or recursively computable on the tree. 
These properties depend on both the graph and the Riccati solution: the graph restricts the possible nonzero entries, while the control problem determines their values.

We also record a nearby example that falls outside the outward-rooted assumptions.  Keep a four-vertex tree but orient the edges as $2\to1$, $2\to3$, and $4\to3$.  With the same standard basis vectors as in \eqref{eq:four-storage-sr-data}, take \(R=[\hat e_1\ \hat e_3\ \hat e_3], \qquad S=[\hat e_2\ \hat e_2\ \hat e_4]\).
This example satisfies Assumption~\ref{ass:data}, but $\rank R=2<3=m$, so it violates Assumption~\ref{ass:outward-rooted}.  For $\beta=1/2$, direct substitution into \eqref{eq:sr-reduced-H-DARE-expanded} gives the solution
\[
    H=
    \begin{bmatrix}
        4/3&1/3&1/3&1/3\\
        1/3&1/3&1/3&1/3\\
        1/3&1/3&4/3&1/3\\
        1/3&1/3&1/3&1/3
    \end{bmatrix}.
\]
For this solution, substituting $\beta=1/2$ and $r=1/\sqrt2$ into the matrix $F$ from \eqref{eq:sr-known-H-F} gives
\begin{equation}\label{eq:rank-deficient-sr-FZ}
    FZ=\frac{1}{\sqrt2}
    \begin{bmatrix}
        -1&0&0&1/3\\
        0&-1&0&0\\
        0&0&-1&1/3
    \end{bmatrix}.
\end{equation}
Equation~\eqref{eq:rank-deficient-sr-FZ} shows that this example has sparse compressed input factors although it is outside Assumption~\ref{ass:outward-rooted}.

\subsection{\DIFdelbegin \DIFdel{Intersection components}\DIFdelend \DIFaddbegin \DIFadd{Explicit reduced solution and RI support}\DIFaddend }\label{subsec:paths-outward-trees}

The \DIFdelbegin \DIFdel{rank-deficient example at the end of the previous subsection separates the tree condition in Assumption~\ref{ass:data} from the receiver-rank condition in Assumption~\ref{ass:outward-rooted}.  The RI denominator construction only needs an invertible graph-coordinate matrix $Z$, but }\DIFdelend \DIFaddbegin \DIFadd{two preceding examples indicate that incidence differences, rather than entrywise sparsity of $H$, determine the }\DIFaddend sparsity of the \DIFdelbegin \DIFdel{online input block is a separate statement.  From \(D=S^\top S+\beta C^\top H C, \qquad FZ=rD^{-1}(S^\top-\beta C^\top H)Z\), there are two distinct mechanisms to control: the incidence compression $C^\top H C$ must be sparse enough that $D$ is sparse or locally block sparse, and the product of $D^{-1}$ with $(S^\top-\beta C^\top H)Z$ must not destroy this locality.  These two requirements involve the compressed denominator and input matrices, not the rank of $R$ alone.  For the rank-deficient treeexample, \eqref{eq:rank-deficient-sr-FZ} verifies the input sparsity directly.  }
\DIFdel{For a directed path the compressed matrix $FZ$ can be written explicitly.  Let the vertices be $1,\ldots,N$ and the edges be $i+1\to i$, $i=1,\ldots,N-1$.  Let }\DIFdelend \DIFaddbegin \DIFadd{feedback factors.  We now derive $H$, $HC$, and $C^\top HC$ explicitly for every directed tree.  For each edge $e_i$, write $e_i:s_i\to t_i$ and let }\DIFaddend $c_i$ \DIFdelbegin \DIFdel{, $R_i$, and $S_i$ denote the }\DIFdelend \DIFaddbegin \DIFadd{be column }\DIFaddend $i$ \DIFdelbegin \DIFdel{th columns }\DIFdelend of $C$\DIFdelbegin \DIFdel{, $R$, and $S$.  Then \(c_i=\hat e_i-\hat e_{i+1}, \qquad R_i=\hat e_i, \qquad S_i=\hat e_{i+1}\).
Define positive scalars \(g_1=1, \qquad g_{i+1}=\frac{\beta g_i}{1+\beta g_i}, \qquad i=1,\ldots,N-2\), and
}
\DIFdel{\begin{gathered}
    h_N=\frac{\beta g_{N-1}}{(1-\beta)(1+\beta g_{N-1})},\\
    h_i=h_{i+1}+g_i,\qquad i=N-1,\ldots,1.
\end{gathered}
}
\DIFdel{Then the nested matrix $H_{ij}=h_{\max\{i,j\}}$ solves \eqref{eq:sr-reduced-H-DARE-expanded}.  In this special case
}
\DIFdel{\begin{gathered}
    C^\top H C=\operatorname{diag}(g_1,\ldots,g_{N-1}),\\
    D=\operatorname{diag}(1+\beta g_1,\ldots,1+\beta g_{N-1}).
\end{gathered}
}
\DIFdelend \DIFaddbegin \DIFadd{.
}

\begin{lemma}\label{lem:tree-height}
\DIFadd{Under Assumption~\ref{ass:data}, there is a function $h:V\to\mathbb Z_{\ge0}$ such that
}\begin{equation}\DIFadd{\label{eq:tree-height}
    h(t_i)=h(s_i)+1,
    \qquad e_i:s_i\to t_i,
}\end{equation}\DIFaddend 
and \DIFdelbegin \DIFdel{the nonzero entries of $FZ$ are }
    \DIFdel{(FZ)_{i,i}=-r, \qquad (FZ)_{i,i+1}=\frac{r}{1+\beta g_i} \quad (i<N-1),
}
\DIFdelend \DIFaddbegin \DIFadd{$\min_{v\in V}h(v)=0$.  This function is unique.
}\end{lemma}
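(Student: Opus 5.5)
The plan is to build $h$ by propagating differences along the tree from a base vertex, and then to shift it so that its minimum is zero. Fix any vertex $v_0\in V$. By Assumption~\ref{ass:data} the underlying undirected graph of $G$ is a tree, so for every $v\in V$ there is a unique undirected path $v_0=w_0,w_1,\ldots,w_\ell=v$. Each consecutive pair $(w_{j-1},w_j)$ is joined by exactly one edge $e_i$. Assign the increment $\sigma_j=+1$ if $e_i:w_{j-1}\to w_j$ and $\sigma_j=-1$ if $e_i:w_j\to w_{j-1}$. Then set $\tilde h(v)=\sum_{j=1}^{\ell}\sigma_j$, so that $\tilde h(v_0)=0$.

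Equivalently, $\tilde h$ is the unique solution of $C^\top \tilde h=\ones$ with $\tilde h(v_0)=0$. Here column $i$ of $C$ gives $(C^\top\tilde h)_i=\tilde h(t_i)-\tilde h(s_i)$. Existence and uniqueness of this solution follow from Lemma~\ref{lem:tree-incidence-rank-Z}: since $\rank C=N-1=m$, the map $C^\top$ is onto $\R^m$, and its kernel is spanned by $\ones$. I would use this linear-algebra formulation as the cleanest route. Because $\ker C^\top=\operatorname{span}\{\ones\}$, fixing the value at $v_0$ pins down the solution. The path description shows that the solution is integer-valued, since it is a sum of $\pm1$ increments.

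Next, I would verify \eqref{eq:tree-height} directly from the path construction; it also follows at once from $C^\top\tilde h=\ones$. Take an edge $e_i:s_i\to t_i$. Removing $e_i$ splits the tree into two components, so one endpoint lies on the unique path from $v_0$ to the other. Extending or truncating that path by $e_i$ changes the sum by exactly $\tilde h(t_i)-\tilde h(s_i)=1$. Finally, define $h=\tilde h-\min_{v}\tilde h(v)\,\ones$. This leaves the differences unchanged, since $C^\top\ones=0$. It yields values in $\mathbb Z_{\ge0}$ with minimum $0$.

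For uniqueness, suppose $h'$ is another such function. Then $C^\top(h-h')=0$, so $h-h'=c\ones$ by the kernel statement from Lemma~\ref{lem:tree-incidence-rank-Z}. Both functions have minimum $0$, which forces $c=0$. The only point needing care is the kernel claim $\ker C^\top=\operatorname{span}\{\ones\}$. This follows from $\rank C=N-1$ together with $\ones^\top C=0$, since every column of $C$ has one entry $+1$ and one entry $-1$. I do not expect any step to be a real obstacle.
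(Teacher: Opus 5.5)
Your proposal is correct and follows essentially the same route as the paper. Both build $\tilde h$ by summing $\pm1$ increments along the unique undirected path from a base vertex, shift by $\min\tilde h$, and get uniqueness because the difference of two solutions is constant across every edge and hence a multiple of $\ones$. Your reformulation as $C^\top\tilde h=\ones$, with $\ker C^\top=\operatorname{span}\{\ones\}$ taken from Lemma~\ref{lem:tree-incidence-rank-Z}, only repackages in linear-algebra terms the connectivity argument the paper uses directly.
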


\DIFadd{For $a\in\mathbb Z$, let
}\begin{equation}\DIFadd{\label{eq:height-superlevel-components}
    V_a=\{v\in V:h(v)\ge a\},
    \qquad
    }{\DIFadd{\mathcal C}}\DIFadd{_a=\operatorname{cc}(G}[\DIFadd{V_a}]\DIFadd{),
}\end{equation}\DIFaddend 
\DIFdelbegin \DIFdel{together with the mean-column entries
}
    \DIFdel{(FZ)_{i,N} =r\,\frac{1-\beta i g_i}{1+\beta g_i}.
}
\DIFdelend \DIFaddbegin \DIFadd{where $G[V_a]$ is the subgraph induced by $V_a$ and $\operatorname{cc}$ denotes its collection of connected components after edge orientations are ignored.  Let $h_{\max}=\max_{v\in V}h(v)$.  For $a\le h_{\max}$ and $\Omega\in{\mathcal C}_a$, define
}\begin{equation}\DIFadd{\label{eq:height-component-kappa}
    \kappa_{a,\Omega}
    =\left(\sum_{v\in\Omega}\beta^{a-h(v)}\right)^{-1}.
}\end{equation}\DIFaddend 
\DIFdelbegin \DIFdel{Thus the directed path gives a positive sparse answer to the known-$H$ compressed-feedback question by direct calculation.  }\DIFdelend \DIFaddbegin \DIFadd{We call $\Omega\in{\mathcal C}_a$ a height component at level $a$.
For every subset $\Omega\subseteq V$, let $\xi_\Omega$ denote its indicator vector.
}\DIFaddend 

\DIFaddbegin
Figure~\ref{fig:four-storage-graphs}(a) illustrates these definitions.  Its orientation is $4\to3\to2\to1$, so the minimum-zero height function is
\[
    h(4)=0,\qquad h(3)=1,\qquad h(2)=2,\qquad h(1)=3.
\]
The positive-level height components and their coefficients are
\begin{align*}
    {\mathcal C}_1&=\{\{1,2,3\}\},
    &\kappa_{1,\{1,2,3\}}&=(1+\beta^{-1}+\beta^{-2})^{-1},\\
    {\mathcal C}_2&=\{\{1,2\}\},
    &\kappa_{2,\{1,2\}}&=(1+\beta^{-1})^{-1},\\
    {\mathcal C}_3&=\{\{1\}\},
    &\kappa_{3,\{1\}}&=1.
\end{align*}
Thus the components become nested as the level increases.  For a general orientation they need not be rooted subtrees, which motivates the collection of vertex sets defined next.
\DIFaddend

\DIFdelbegin \DIFdel{The calculation suggests a finer structure beyond sparsity.  While our results do not rely on this property, the stabilizing reduced solution of \eqref{eq:sr-reduced-H-DARE-expanded} appears to lie in the span generated by rooted-subtree indicators in all computed examples considered.
}

\DIFdel{For arbitrary orientations of a directed tree, rooted subtrees are replaced by intersections of receiver-side vertex sets.
}\DIFdelend For a vertex $v$ that receives at least one edge, \DIFdelbegin \DIFdel{define \(E_v^{\rm in}=\{e\in E: e=s\to v\text{ for some }s\in V\}\).
Let }\DIFdelend \DIFaddbegin \DIFadd{let $E_v^{\rm in}$ be the set of edges entering $v$.  Delete these edges from the underlying undirected tree, and let }\DIFaddend ${\mathcal R}(v)$ \DIFdelbegin \DIFdel{contain }\DIFdelend \DIFaddbegin \DIFadd{be the component containing }\DIFaddend $v$\DIFdelbegin \DIFdel{and all vertices that are connected to $v$, ignoring edge directions, after all edges in $E_v^{\rm in}$ have been deleted.
}

\DIFdel{Let ${\mathcal L}$ be the indexed family of all nonempty intersections of these vertex sets, together with $V$ itself:
}\DIFdel{
    }{\DIFdel{\mathcal L}}\DIFdel{=\{V\}\cup
    \left\{\bigcap_{v\in J}{\mathcal R}(v):
    \emptyset\ne J\subseteq V_{\rm rec},\
    \bigcap_{v\in J}{\mathcal R}(v)\ne\emptyset\right\},
}
\DIFdel{where }\DIFdelend \DIFaddbegin \DIFadd{.  Let }\DIFaddend $V_{\rm rec}$ \DIFdelbegin \DIFdel{is }\DIFdelend \DIFaddbegin \DIFadd{be }\DIFaddend the set of vertices that receive at least one edge\DIFdelbegin \DIFdel{.  For $\Omega\in{\mathcal L}$, let $\xi_\Omega$ be the indicator vector of $\Omega$.  Define the intersection-component space
}\DIFdel{
    }{\DIFdel{\mathcal V}}\DIFdel{_{\mathcal L}
    =\operatorname{span}\{\xi_\Omega\xi_\Omega^\top:\Omega\in{\mathcal L}\}.
}
\DIFdel{Define an equivalence relation on vertices by
}\DIFdel{
    i\sim_{\mathcal B}j
    \quad\Longleftrightarrow\quad
    1_{i\in{\mathcal R}(v)}=1_{j\in{\mathcal R}(v)}
    \quad\text{for every }v\in V_{\rm rec}.
}
\DIFdel{Let $A_1,\ldots,A_\ell$ be the equivalence classes of \eqref{eq:boolean-atom-relation}.  Let ${\mathcal B}$ be the collection of all nonempty unions of the sets $A_1,\ldots,A_\ell$, and define
}\DIFdelend \DIFaddbegin \DIFadd{, and define
}\DIFaddend \begin{equation}\DIFdelbegin 
\DIFdelend \DIFaddbegin \label{eq:component-lattice}
    \DIFaddend {\mathcal \DIFdelbegin \DIFdel{V}\DIFdelend \DIFaddbegin \DIFadd{L}\DIFaddend }\DIFdelbegin \DIFdel{_{\mathcal B}
    }\DIFdelend =\DIFdelbegin 
\DIFdelend \{\DIFdelbegin \DIFdel{\xi_\Omega\xi_\Omega^\top:\Omega\in{\mathcal B}}\DIFdelend \DIFaddbegin \DIFadd{V}\DIFaddend \}\DIFaddbegin \DIFadd{\cup
    }\left\{\DIFadd{\bigcap_{v\in J}{\mathcal R}(v):
    \emptyset}\ne \DIFadd{J\subseteq V_{\rm rec},\
    \bigcap_{v\in J}{\mathcal R}(v)}\ne\DIFadd{\emptyset}\right\}\DIFaddend .
\end{equation}
\DIFdelbegin \DIFdel{Equivalently, the matrices in \eqref{eq:boolean-atom-space} are the symmetric matrices that are constant on every block $A_i\times A_j$.
Define }\DIFdel{
    }{\DIFdel{\mathcal W}}\DIFdel{_{\mathcal B}
    =\left\{z\in\R^N:
    \sum_{i\in A_\alpha}z_i=0,
    \quad \alpha=1,\ldots,\ell\right\}.
}
\DIFdel{We use matrices of the form
}\DIFdelend \DIFaddbegin \DIFadd{Define the following linear span:
}\DIFaddend \begin{equation}\DIFdelbegin 
\DIFdel{H}\DIFdelend \DIFaddbegin \label{eq:intersection-component-space}
    {\DIFadd{\mathcal V}}\DIFadd{_{\mathcal L}
    }\DIFaddend =\DIFdelbegin \DIFdel{\sum_{\Omega\in{\mathcal L}} q_\Omega}\DIFdelend \DIFaddbegin \operatorname{span}\DIFadd{\{}\DIFaddend \xi_\Omega\xi_\Omega^\top\DIFaddbegin \DIFadd{:\Omega\in{\mathcal L}\}}\DIFaddend .
\end{equation}
\DIFdelbegin \DIFdel{The next elementary lemma gives the compression structure implied by \eqref{eq:downstream-H-ansatz}.
}\DIFdelend 

\DIFaddbegin
\DIFadd{The role of the height components can be understood from flow cancellation.  Each directed edge joins consecutive height levels by \eqref{eq:tree-height}.  When the storage-balance equations are summed over a height component, every transport variable on an internal edge appears once with each sign and cancels.  The remaining quantity is the aggregate storage coordinate $\xi_\Omega^\top w$.  Minimizing the discounted quadratic cost for a fixed aggregate produces the coefficient $\kappa_{a,\Omega}$ in \eqref{eq:height-component-kappa} and a rank-one term proportional to $\xi_\Omega\xi_\Omega^\top$.  The following result shows that these component contributions give the complete stabilizing solution.}
\DIFaddend

\DIFdelbegin 
\DIFdelend \DIFaddbegin \begin{theorem}\label{thm:explicit-height-solution}
\DIFaddend Under Assumption~\ref{ass:data}, \DIFdelbegin \DIFdel{suppose }\DIFdelend \DIFaddbegin \DIFadd{the stabilizing positive semidefinite solution of the reduced DARE \eqref{eq:sr-reduced-H-DARE-expanded} is
}\begin{equation}\DIFadd{\label{eq:explicit-height-solution}
\begin{aligned}
    H={}&q_0\ones\ones^\top
    +\sum_{a=1}^{h_{\max}}
      \sum_{\Omega\in{\mathcal C}_a}
      \kappa_{a,\Omega}\xi_\Omega\xi_\Omega^\top,\\
    q_0={}&
    \frac{1}{(1-\beta)\sum_{v\in V}\beta^{-h(v)}}.
\end{aligned}
}\end{equation}
\DIFadd{Every set appearing in \eqref{eq:explicit-height-solution} belongs to ${\mathcal L}$, the matrix }\DIFaddend $H$ \DIFdelbegin \DIFdel{has the form \eqref{eq:downstream-H-ansatz}.  If $e_i:s_i\to r_i$ is
a directed edge and $c_i$ is column $i$ of $C$, then
}\DIFdel{
    Hc_i=\sum_{\Omega\in{\mathcal L}:\, r_i\in \Omega,\ s_i\notin \Omega} q_\Omega\xi_\Omega .
}
\DIFdel{For two edges $e_i:s_i\to r_i$ and $e_j:s_j\to r_j$, }\DIFdelend \DIFaddbegin \DIFadd{belongs to ${\mathcal V}_{\mathcal L}$, and every coefficient in \eqref{eq:explicit-height-solution} is positive.
}\end{theorem}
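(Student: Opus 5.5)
Since Lemma~\ref{lem:reduced-dare-existence-conditions} guarantees uniqueness of the stabilizing positive semidefinite solution, the plan is to verify that the candidate $H$ in \eqref{eq:explicit-height-solution} satisfies the reduced DARE with $D$ nonsingular and $A_H$ Schur, and then to check the side claims (positivity, membership in $\mathcal L$ and $\mathcal V_{\mathcal L}$). Positivity of the coefficients is immediate from \eqref{eq:height-component-kappa} and $0<\beta<1$, and $H\succeq 0$ follows because it is a positive combination of rank-one positive semidefinite terms. For membership, I would show that each height component $\Omega\in\mathcal C_a$ with $a\ge1$ equals $\bigcap_{v\in J}\mathcal R(v)$, where $J$ is the set of receivers $t_i$ of the edges crossing from $V\setminus V_a$ into $\Omega$. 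Such an edge goes from height $a-1$ to height $a$, so its receiver lies in $\Omega$. Deleting all edges entering that receiver separates $\Omega$ from the lower side, and every boundary edge of $\Omega$ is of this form by \eqref{eq:tree-height}. The tree property then identifies the intersection with the connected piece $\Omega$, and the $q_0$ term uses $V\in\mathcal L$.

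The core computation is the compression structure. For an edge $e_i:s_i\to t_i$, $\xi_\Omega^\top c_i=1_{t_i\in\Omega}-1_{s_i\in\Omega}$. Because $h(t_i)=h(s_i)+1$, this is nonzero only for the unique component $\Omega_i\in\mathcal C_{a_i}$ with $a_i=h(t_i)$ that contains $t_i$ but not $s_i$. Hence
\[
Hc_i=\kappa_{a_i,\Omega_i}\xi_{\Omega_i},
\qquad
C^\top HC=\operatorname{diag}(g_i),\quad g_i=\kappa_{a_i,\Omega_i}.
\]
Then $D=S^\top S+\beta\operatorname{diag}(g_i)$. Here $S^\top S$ is block-diagonal, with all-ones blocks over edges sharing a sender, so $D$ is positive definite and invertible by a Sherman--Morrison formula blockwise. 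I would then substitute into \eqref{eq:sr-reduced-H-DARE}, written as $(1-\beta)H=I-ND^{-1}N^\top$ with $N=-S+\beta\sum_i g_i\xi_{\Omega_i}\hat e_i^\top$. The right side expands into $I-SD^{-1}S^\top$, cross terms $\xi_{\Omega_i}\hat e_{s}^\top$, and terms $\xi_{\Omega_i}\xi_{\Omega_j}^\top$. I would verify the identity by testing both sides on the basis $Z=[C\ \ones]$ (Lemma~\ref{lem:tree-incidence-rank-Z}), i.e.\ checking $c_j^\top(\cdot)c_i$, $c_j^\top(\cdot)\ones$ and $\ones^\top(\cdot)\ones$. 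On the left these are explicit: $(1-\beta)g_i\delta_{ij}$, and $(1-\beta)\xi_{\Omega_j}^\top H\ones$ computed from the nested structure of components. The $\ones^\top$ test should reproduce $q_0$ through $\sum_v\beta^{-h(v)}$. The recursion $\kappa_{a,\Omega}^{-1}=\sum_{v\in\Omega}\beta^{a-h(v)}$ relates a component to its children at level $a+1$ via $\kappa_{a,\Omega}^{-1}=\sum_{v\in\Omega,h(v)=a}1+\beta\sum_{\Omega'}\kappa_{a+1,\Omega'}^{-1}$. I expect this to close the edge tests, since each edge test localizes to the sender's height level.

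For stabilization I would compute $A_H=r(I-CD^{-1}N^\top)$ in the coordinates $Z$: $A_H$ should map $\ones$ and the columns of $C$ into a triangular structure ordered by height, with diagonal entries of modulus at most $r<1$. Alternatively, stabilizability and detectability through Lemma~\ref{lem:full-reduced-dare-equivalence} and Theorem~\ref{thm:sender-receiver-dare-unique} would suffice if the candidate is the maximal solution.

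The main obstacle is the algebraic verification of the reduced DARE. This is especially so for merging orientations where several edges share a sender, so that $D$ is not diagonal and the Sherman--Morrison correction must cancel against the $\kappa$ recursion. I would first do the outward case, where $S^\top S=I$, and then handle shared senders by grouping the edges at each sender.
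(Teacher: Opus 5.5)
Your outer strategy (verify the candidate, then invoke the uniqueness in Lemma~\ref{lem:reduced-dare-existence-conditions}) is legitimate. Your identification of each $\Omega\in\mathcal C_a$, $a\ge1$, with $\bigcap_{v\in J}\mathcal R(v)$ is the argument the paper uses. The core computation, however, is wrong outside outward trees. From $Hc_i=\kappa_i\xi_{\Omega_i}$ you get $c_j^\top Hc_i=\kappa_i\,c_j^\top\xi_{\Omega_i}$, and $c_j^\top\xi_{\Omega_i}=1$ for \emph{every} edge $e_j$ entering the same component $\Omega_i$ from level $a_i-1$, not only for $j=i$. So $C^\top HC$ is block diagonal with all-ones blocks $\kappa_{a,\Omega}\ones_{\Gamma_{a,\Omega}}\ones_{\Gamma_{a,\Omega}}^\top$ (Corollary~\ref{cor:exact-incidence-compression}). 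It is diagonal exactly when no two edges share a receiver. In the merging example $2\to1$, $2\to3$, $4\to3$, the last two edges both enter $\{3\}$. In the mixed path $1\to2\to3\leftarrow4\leftarrow5$, the edges $1\to2$ and $5\to4$ both enter $\{2,3,4\}$.

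Consequently the left-hand sides $(1-\beta)g_i\delta_{ij}$ of your edge tests are wrong. Your reason for $D\succ0$ also fails; the conclusion is true but needs another argument. You also have $S$ and $R$ interchanged: Assumption~\ref{ass:outward-rooted} gives $R^\top R=I$, outward branching trees such as $1\to2$, $1\to3$ have $S^\top S\ne I$, and merging means shared receivers. Finally, the recursion is $\kappa_{a,\Omega}^{-1}=|\{v\in\Omega:h(v)=a\}|+\beta^{-1}\sum_{\Omega'\in\mathcal C_{a+1},\,\Omega'\subseteq\Omega}\kappa_{a+1,\Omega'}^{-1}$, with $\beta^{-1}$, not $\beta$.

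The decisive gap is that the reduced DARE, which is the whole content of the theorem, is never verified, and the mechanism you sketch cannot do it. With the correct block form, sender groups of $S^\top S$ and entry groups of $C^\top HC$ chain together. In the merging example
\[
D=\begin{bmatrix}1+\beta&1&0\\1&1+\beta&\beta\\0&\beta&1+\beta\end{bmatrix}.
\]
For a zigzag path $x_0\to a_1\leftarrow x_1\to a_2\leftarrow x_2\to\cdots$ (edges ordered along the path), all edges fall into one class. Then $D$ is an irreducible tridiagonal $m\times m$ matrix with a full inverse. No blockwise Sherman--Morrison formula evaluates $ND^{-1}N^\top$ there, and testing on the basis $Z$ does not remove the need for $D^{-1}$.

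The missing idea is a way around inverting $D$. The paper identifies $w^\top Hw$ with the value of the flow problem $\inf\sum_t\|z_t-Su_t\|^2$, $z_{t+1}=r(z_t+Cu_t)$. With $y_t=r^{-t}(z_t-Su_t)$, summing the storage balances over $\{(v,t):v\in\Omega,\ 0\le t\le h(v)-a\}$ cancels all transport terms and gives $\sum_{v\in\Omega}y_{h(v)-a,v}=\xi_\Omega^\top w$. Weighted Cauchy--Schwarz then bounds each component's contribution below by $\kappa_{a,\Omega}(\xi_\Omega^\top w)^2$. Solving incidence equations on each component tree attains all these bounds at once. Completing the square in the Bellman equation yields the DARE, and uniqueness of the optimal input sequence yields $D\succ0$. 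To keep a direct check you would have to guess $F_\star=-D^{-1}N^\top$ and verify $DF_\star=-N^\top$ and $(1-\beta)H=I+NF_\star$. Producing $F_\star$, however, amounts to the same flow construction.

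Your stability claim is likewise only asserted, and the ``maximal solution'' route presupposes what must be shown. Once the DARE and $H\succeq0$ hold, the DARE itself gives $H=(I-SF_\star)^\top(I-SF_\star)+A_H^\top HA_H$. An eigenpair $A_H\psi=\lambda\psi$ with $|\lambda|\ge1$ then forces $(I-SF_\star)\psi=0$ and $(rR-\lambda S)F_\star\psi=0$, which Lemma~\ref{lem:tree-edge-column-rank} excludes.
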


\DIFadd{For edge $e_i:s_i\to t_i$, set
}\DIFaddend \begin{equation}\DIFdelbegin 
\DIFdel{c_i^\top Hc_j
    =\sum_{\Omega\in{\mathcal L}:\, r_j\in \Omega,\ s_j\notin \Omega}
      q_\Omega}
\DIFdel{1_{r_i\in \Omega}-1_{s_i\in \Omega}}
\DIFdel{.
}\DIFdelend \DIFaddbegin \label{eq:edge-height-component}
\begin{aligned}
    a_i&=h(t_i),\\
    \Omega_i&=\text{the component of $G[V_{a_i}]$ containing $t_i$},\\
    \kappa_i&=\kappa_{a_i,\Omega_i}.
\end{aligned}
\DIFaddend \end{equation}
\DIFdelbegin 
\DIFdelend 

\DIFdelbegin 
\DIFdelend \DIFaddbegin \begin{corollary}\label{cor:exact-incidence-compression}
\DIFaddend Under Assumption~\ref{ass:data}, \DIFdelbegin \DIFdel{for every $z\in{\mathcal W}_{\mathcal B}$ there is a vector $y\in\R^m$ such that
}\DIFdel{
    Sy=z,
    \qquad
    Cy\in{\mathcal W}_{\mathcal B}.
}
\DIFdelend \DIFaddbegin \DIFadd{the following identities hold:
}\begin{align}
    \DIFadd{Hc_i}&\DIFadd{=\kappa_i\xi_{\Omega_i},
        \label{eq:exact-HC-column}}\\
    \DIFadd{c_j^\top Hc_i}&\DIFadd{=
    \begin{cases}
        \kappa_i,&(a_j,\Omega_j)=(a_i,\Omega_i),\\
        0,&\text{otherwise}.
    \end{cases}
        \label{eq:exact-CtHC-entry}
}\end{align}\DIFaddend 
\DIFdelbegin 

\DIFdel{The rank-deficient tree example associated with \eqref{eq:rank-deficient-sr-FZ} is a simple case where ${\mathcal L}=\{V,\{1\},\{3\}\}$ and \(H=\frac{1}{3}\ones\ones^\top+ \xi_{\{1\}}\xi_{\{1\}}^\top+ \xi_{\{3\}}\xi_{\{3\}}^\top\).
This gives
}
    \DIFdel{HC=
    \begin{bmatrix}
        1&0&0\\
        0&0&0\\
        0&1&1\\
        0&0&0
    \end{bmatrix},
    \qquad
    C^\top HC=
    \begin{bmatrix}
        1&0&0\\
        0&1&1\\
        0&1&1
    \end{bmatrix},
}
\DIFdel{which gives the sparsity of the compressed input block through \eqref{eq:rank-deficient-sr-FZ}.  The example with edges $2\to1$, $1\to3$, $4\to3$, and $5\to4$ shows why intersections are needed:
there
}
    \DIFdel{H=\frac{1}{5}\ones\ones^\top+
      \frac{1}{4}\xi_{\{1,3,4\}}\xi_{\{1,3,4\}}^\top+
      \xi_{\{3\}}\xi_{\{3\}}^\top,
}
\DIFdel{where $\{1,3,4\}={\mathcal R}(1)\cap{\mathcal R}(4)$.  The corresponding compression is sparse, }
    \DIFdel{C^\top HC=
    \begin{bmatrix}
        1/4&0&0&1/4\\
        0&1&1&0\\
        0&1&1&0\\
        1/4&0&0&1/4
    \end{bmatrix}.
}

\DIFdel{These examples motivate considering the set of matrices 
${\mathcal V}_{\mathcal L}$ in \eqref{eq:intersection-component-space}
as a refinement capturing observed structural patterns.  }

\DIFdel{Let ${\mathcal A}$ be a collection of subsets of $V$.  Suppose
}\DIFdel{
    M=\sum_{\Omega\in{\mathcal A}}q_\Omega\xi_\Omega\xi_\Omega^\top .
}
\DIFdelend \DIFdelbegin \DIFdel{If $e_i:s_i\to r_i$ is a directed edge and $c_i$ is column $i$ of $C$, then
}\DIFdel{
    Mc_i=\sum_{\Omega\in{\mathcal A}}q_\Omega
    \bigl(1_{r_i\in\Omega}-1_{s_i\in\Omega}\bigr)\xi_\Omega .
}

\DIFdel{Under Assumption~\ref{ass:data}, let ${\mathcal B}$ be the collection of subsets used in \eqref{eq:boolean-atom-space}.
If $H\in{\mathcal V}_{\mathcal B}$ and
}\DIFdel{
    D(H)=S^\top S+\beta C^\top HC
}
\DIFdel{is nonsingular,
then
}
\DIFdelend \DIFaddbegin \DIFadd{Define $\Gamma_{a,\Omega}=\{i:(a_i,\Omega_i)=(a,\Omega)\}$.  For each nonempty $\Gamma_{a,\Omega}$, let $\ones_{\Gamma_{a,\Omega}}$ denote its all-ones vector.  Let $\operatorname{blkdiag}$ denote a block-diagonal matrix with the listed diagonal blocks.  After grouping the edge indices by the nonempty sets $\Gamma_{a,\Omega}$,
}\DIFaddend \begin{equation}\DIFdelbegin 
\DIFdelend \DIFaddbegin \label{eq:exact-CtHC-blocks}
    \DIFadd{C^\top HC
    =}\operatorname{blkdiag}\DIFadd{_{(a,\Omega)}
    }\left(
      \DIFadd{\kappa_{a,\Omega}
      \ones_{\Gamma_{a,\Omega}}
      \ones_{\Gamma_{a,\Omega}}^\top
    }\right)\DIFadd{.
}\DIFaddend \end{equation}
\DIFdelbegin 
\DIFdelend \DIFaddbegin \end{corollary}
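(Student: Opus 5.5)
The plan is to start from the explicit formula \eqref{eq:explicit-height-solution} of Theorem~\ref{thm:explicit-height-solution}. Since $H$ is a positive combination of terms $\xi_\Omega\xi_\Omega^\top$, for any edge $e_i:s_i\to t_i$ we have
\[
Hc_i=\sum_{\Omega}q_\Omega\bigl(1_{t_i\in\Omega}-1_{s_i\in\Omega}\bigr)\xi_\Omega .
\]
The term $q_0\ones\ones^\top$ drops out because $\ones^\top c_i=0$. So the whole computation reduces to deciding, for each height component $\Omega\in{\mathcal C}_a$, whether it separates the endpoints of $e_i$.

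By Lemma~\ref{lem:tree-height}, $h(t_i)=h(s_i)+1=a_i$. For $a\le h(s_i)$, both endpoints lie in $V_a$ and are joined by the edge $e_i$, so they lie in the same component of $G[V_a]$. Every $\Omega\in{\mathcal C}_a$ then contains both or neither, and the coefficient $1_{t_i\in\Omega}-1_{s_i\in\Omega}$ vanishes. For $a>a_i$, neither endpoint is in $V_a$, so the coefficient is again zero. At $a=a_i$, the receiver $t_i\in V_{a_i}$ while $s_i\notin V_{a_i}$. Exactly one component contains $t_i$, namely $\Omega_i$ from \eqref{eq:edge-height-component}, and it contributes coefficient $+1$. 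This gives $Hc_i=\kappa_i\xi_{\Omega_i}$, which is \eqref{eq:exact-HC-column}.

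For \eqref{eq:exact-CtHC-entry}, compute $c_j^\top Hc_i=\kappa_i(1_{t_j\in\Omega_i}-1_{s_j\in\Omega_i})$. If $(a_j,\Omega_j)=(a_i,\Omega_i)$, then $t_j\in\Omega_i$ and $s_j\notin V_{a_i}$, so the entry is $\kappa_i$. Otherwise there are three cases.
\begin{itemize}
\item If $s_j\in V_{a_i}$, then $t_j\in V_{a_i}$ as well, and the two endpoints are adjacent. They are in the same component, so the entry is $0$.
\item If $s_j\notin V_{a_i}$ but $t_j\in V_{a_i}$, then $h(t_j)\ge a_i$ and $h(s_j)<a_i$ force $a_j=a_i$. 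The component containing $t_j$ is $\Omega_j\neq\Omega_i$, so $t_j\notin\Omega_i$ and the entry is $0$.
\item If neither endpoint is in $V_{a_i}$, the entry is trivially $0$.
\end{itemize}
The relation $(a_j,\Omega_j)=(a_i,\Omega_i)$ is an equivalence relation on edge indices with classes $\Gamma_{a,\Omega}$. Permuting edge indices to group these classes turns \eqref{eq:exact-CtHC-entry} into the block-diagonal form \eqref{eq:exact-CtHC-blocks}, with each block equal to $\kappa_{a,\Omega}\ones_{\Gamma_{a,\Omega}}\ones_{\Gamma_{a,\Omega}}^\top$. Here we use that $\kappa_i=\kappa_{a,\Omega}$ for every $i\in\Gamma_{a,\Omega}$.

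The main obstacle is the careful case analysis showing that only the level-$a_i$ component containing $t_i$ separates the edge. This rests entirely on the height property \eqref{eq:tree-height} and on superlevel sets being closed under passing from sender to receiver. Everything else is bookkeeping on top of Theorem~\ref{thm:explicit-height-solution}.
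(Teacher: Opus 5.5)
Your proposal is correct and takes essentially the same route as the paper: substitute the explicit formula from Theorem~\ref{thm:explicit-height-solution}, remove the global term using $\ones^\top c_i=0$, use $h(t_i)=h(s_i)+1$ to show that only the level-$a_i$ component containing $t_i$ separates $e_i$, and then group edges by $(a_i,\Omega_i)$. The one cosmetic difference is that you split the off-diagonal cases by whether the endpoints of $e_j$ lie in $V_{a_i}$, whereas the paper compares $a_j$ with $a_i$; the two case splits are equivalent.
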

\DIFaddend 

\DIFdelbegin 
\DIFdel{Under Assumption~\ref{ass:data}, let $H$ be the stabilizing positive semidefinite solution of }\DIFdelend \DIFaddbegin \DIFadd{Theorem~\ref{thm:explicit-height-solution} removes the need to solve }\DIFaddend the reduced DARE \DIFdelbegin \DIFdel{\eqref{eq:sr-reduced-H-DARE-expanded}.  Then $H\in{\mathcal V}_{\mathcal B}$.
}
\DIFdelend \DIFaddbegin \DIFadd{numerically for the stage cost in Section~\ref{sec:problem}.  The directed tree and $\beta$ determine $h$, the height components ${\mathcal C}_a$, and all coefficients in \eqref{eq:explicit-height-solution}.  Corollary~\ref{cor:exact-incidence-compression} then gives $C^\top HC$ in \eqref{eq:sr-DH} directly.
}\DIFaddend 

\DIFdelbegin 
\DIFdel{Proposition~\ref{prop:intersection-span-invariance} cannot be extended to ${\mathcal V}_{\mathcal L}$.  A counterexample is the tree $5\to1\to2\to3\leftarrow4\leftarrow6$.
}

\DIFdel{We have performed an exhaustive search over all directed trees with up to and including six vertices.  In all tested cases
the stabilizing positive semidefinite solution from Lemma~\ref{lem:reduced-dare-existence-conditions} belonged to ${\mathcal V}_{\mathcal L}$, suggesting that this structure may hold more generally as a refinement of the provable class $\mathcal{V}_\mathcal B$.  }

\DIFdel{Under Assumption~\ref{ass:data}, let $H$ be the stabilizing positive semidefinite solution from Lemma~\ref{lem:reduced-dare-existence-conditions}.
Then $H\in{\mathcal V}_{\mathcal L}$.
}

\DIFdel{Use the edge enumeration $E=\{e_1,\ldots,e_m\}$.  }\DIFdelend Define binary matrices $P_D,P_G\in\{0,1\}^{m\times m}$ \DIFdelbegin \DIFdel{so that row $p$ and column $q$ correspond to edges $e_p$ and $e_q$, with entries
}
\DIFdel{\begin{aligned}
    P_D(p,q)=1
    \quad\Longleftrightarrow\quad
    &\text{same sender, or}\\
    &\text{common boundary crossing in }{\mathcal L},
\end{aligned}
}
\DIFdelend \DIFaddbegin \DIFadd{by
}\begin{align*}
\DIFadd{P_D(p,q)=1
}&\DIFadd{\quad\Longleftrightarrow\quad
s_p=s_q\ \text{or}\ (a_p,\Omega_p)=(a_q,\Omega_q),}\\
\DIFadd{P_G(p,q)=1
}&\DIFadd{\quad\Longleftrightarrow\quad
C_{s_p,q}\ne0\ \text{or}\ (a_p,\Omega_p)=(a_q,\Omega_q).
}\end{align*}\DIFaddend 
\DIFdelbegin \DIFdel{and
}
\DIFdel{\begin{aligned}
    P_G(p,q)=1
    \quad\Longleftrightarrow\quad
    &C_{s(e_p),q}\ne0,\text{ or}\\
    &\text{common boundary crossing in }{\mathcal L}.
\end{aligned}
}
\DIFdel{Set $P_D(p,q)=0$ and $P_G(p,q)=0$ when the corresponding }\DIFdelend \DIFaddbegin \DIFadd{Set an entry to zero when its displayed }\DIFaddend condition fails.  \DIFdelbegin \DIFdel{Let $P_D^*$ be }\DIFdelend \DIFaddbegin \DIFadd{For any binary matrix $\Pi$, define $\Pi^*(p,q)=1$ if there is a chain $i_0=p,i_1,\ldots,i_\ell=q$ with $\Pi(i_{j-1},i_j)=1$ for every $j$, and set $\Pi^*(p,q)=0$ otherwise.  We call $\Pi^*$ }\DIFaddend the transitive closure of \DIFdelbegin \DIFdel{$P_D$.  For any matrix $M\in\R^{m\times N}$, define the }\DIFdelend \DIFaddbegin \DIFadd{$\Pi$.  For a matrix in $\R^{m\times N}$, its }\DIFaddend edge-column support \DIFdelbegin \DIFdel{of $MZ$ to be the binary matrix $P\in\{0,1\}^{m\times m}$ with $P(p,q)=1$ exactly when $(MZ)_{pq}\ne0$, for $q=1,\ldots,m$}\DIFdelend \DIFaddbegin \DIFadd{is the binary support of its first $m$ columns, which correspond to the incidence columns of $Z$ in \eqref{eq:Z-definition}}\DIFaddend .  The last column \DIFaddbegin \DIFadd{corresponds to the $\ones$ column }\DIFaddend of \DIFdelbegin \DIFdel{$Z$ from \eqref{eq:Z-definition} is the mean column .
}\DIFdelend \DIFaddbegin \DIFadd{$Z$.  In Boolean matrix arithmetic, multiplication replaces scalar multiplication by logical ``and'' and scalar addition by logical ``or''; inequalities are entrywise.
}\DIFaddend 

\begin{lemma}\label{lem:transitive-closure-classes}
Let \DIFdelbegin \DIFdel{$P\in\{0,1\}^{m\times m}$ }\DIFdelend \DIFaddbegin \DIFadd{$\Pi\in\{0,1\}^{m\times m}$ }\DIFaddend be symmetric and satisfy \DIFdelbegin \DIFdel{$P(p,p)=1$ }\DIFdelend \DIFaddbegin \DIFadd{$\Pi(p,p)=1$ }\DIFaddend for every $p$.  Let \DIFdelbegin \DIFdel{$P^*$ be the transitive closureof $P$.  Then $P^*(p,q)=1$ exactly when a chain $i_0=p,i_1,\ldots,i_\ell=q$ has $P(i_{h-1},i_h)=1$ for every $h$}\DIFdelend \DIFaddbegin \DIFadd{$\Pi^*$ be its transitive closure}\DIFaddend .  The relation $p\sim q$ defined by \DIFdelbegin \DIFdel{$P^*(p,q)=1$ }\DIFdelend \DIFaddbegin \DIFadd{$\Pi^*(p,q)=1$ }\DIFaddend is an equivalence relation.  If \DIFdelbegin \DIFdel{a matrix $A\in\R^{m\times m}$ satisfies $A_{pq}=0$ whenever $P(p,q)=0$, then $A$ }\DIFdelend \DIFaddbegin \DIFadd{$X_{pq}=0$ whenever $\Pi(p,q)=0$, then $X$ }\DIFaddend is block diagonal over the equivalence classes of $\sim$.
\end{lemma}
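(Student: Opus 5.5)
The plan is to check the three claims in order. Each one follows directly from the definition of the transitive closure $\Pi^*$ as chain-reachability in the graph whose adjacency matrix is $\Pi$.

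\emph{Equivalence relation.}
\begin{itemize}
\item Reflexivity: since $\Pi(p,p)=1$, the chain $i_0=p,i_1=p$ is admissible, so $\Pi^*(p,p)=1$.
\item Symmetry: suppose $\Pi^*(p,q)=1$ through a chain $p=i_0,\ldots,i_\ell=q$. Reverse it to $i_\ell,\ldots,i_0$. Each consecutive pair then satisfies $\Pi(i_j,i_{j-1})=\Pi(i_{j-1},i_j)=1$ by symmetry of $\Pi$, so $\Pi^*(q,p)=1$.
\item Transitivity: if $\Pi^*(p,q)=1$ and $\Pi^*(q,s)=1$, concatenate the chain from $p$ to $q$ with the chain from $q$ to $s$. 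The result is a chain from $p$ to $s$, so $\Pi^*(p,s)=1$.
\end{itemize}
The equivalence classes of $\sim$ therefore partition $\{1,\ldots,m\}$.

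\emph{Block diagonality.} Suppose $X_{pq}=0$ whenever $\Pi(p,q)=0$. Take $p$ and $q$ in different equivalence classes. The one-step chain $p,q$ would give $\Pi^*(p,q)=1$ if $\Pi(p,q)=1$, which contradicts $p\not\sim q$. Hence $\Pi(p,q)=0$, and so $X_{pq}=0$.

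Now order the indices class by class, which amounts to a symmetric permutation of rows and columns, as in the grouping used in \eqref{eq:exact-CtHC-blocks}. With this ordering, every entry outside the diagonal blocks $\mathcal K\times\mathcal K$, where $\mathcal K$ ranges over the classes, vanishes. This is exactly the statement that $X$ is block diagonal over the equivalence classes.

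There is no substantive obstacle. The only point needing care is stating explicitly that block diagonality is meant up to this class-wise reordering of indices. I would also note that the hypothesis only requires the support of $X$ to lie in the support of $\Pi$, not of $\Pi^*$, and that this weaker hypothesis suffices.
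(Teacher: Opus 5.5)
Your proof is correct and follows the paper's argument essentially step for step. Reflexivity comes from the diagonal of $\Pi$, symmetry from reversing chains, and transitivity from concatenating them. Block diagonality follows from the implication $\Pi(p,q)=1\Rightarrow\Pi^*(p,q)=1$, then the hypothesis on $X$, then reordering the indices class by class.
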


\begin{proposition}\label{prop:general-separator-FZ-sparsity}
\DIFdelbegin \DIFdel{In addition to }\DIFdelend \DIFaddbegin \DIFadd{Under }\DIFaddend Assumption~\ref{ass:data}, \DIFdelbegin \DIFdel{assume that the stabilizing positive semidefinite solution $H$ of the reduced DARE \eqref{eq:sr-reduced-H-DARE-expanded} belongs to ${\mathcal V}_{\mathcal L}$.  Then the }\DIFdelend \DIFaddbegin \DIFadd{the }\DIFaddend edge-column support $P_F$ of \DIFdelbegin \DIFdel{the product }\DIFdelend $FZ$, with $F$ from \eqref{eq:sr-known-H-F}\DIFdelbegin \DIFdel{and $Z$ from \eqref{eq:Z-definition}}\DIFdelend , satisfies
\begin{equation}\label{eq:separator-FZ-mask}
    P_F\le P_D^*P_G
\end{equation}
in Boolean matrix arithmetic.  No zero is guaranteed in the \DIFdelbegin \DIFdel{mean column }\DIFdelend \DIFaddbegin \DIFadd{column corresponding to $\ones$}\DIFaddend .
\end{proposition}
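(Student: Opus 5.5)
The plan is to write $FZ = D^{-1}G$ with
\[
  G = -r\bigl(-S^\top+\beta C^\top H\bigr)Z = r\bigl(S^\top Z-\beta C^\top HZ\bigr),
\]
so that $D$ and $G$ can be bounded separately. Restricted to the first $m$ columns, $G$ has the edge block $r(S^\top C-\beta C^\top HC)$. First, for the support of $D=S^\top S+\beta C^\top HC$: $(S^\top S)_{pq}=1_{s_p=s_q}$. By Corollary~\ref{cor:exact-incidence-compression}, $(C^\top HC)_{pq}$ is nonzero only if $(a_p,\Omega_p)=(a_q,\Omega_q)$. Hence $D_{pq}\ne0$ implies $P_D(p,q)=1$. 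Second, for $G$ on edge columns: $(S^\top C)_{pq}=C_{s_p,q}$, and $(C^\top HC)_{pq}$ is again supported where $(a_p,\Omega_p)=(a_q,\Omega_q)$. So the edge-column support of $G$ is at most $P_G$.

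Next, $P_D$ is symmetric, because both defining conditions are symmetric in $p,q$. It has ones on the diagonal, because $s_p=s_p$. Lemma~\ref{lem:transitive-closure-classes} then shows that $D$, after permuting edges by the equivalence classes of $P_D^*$, is block diagonal. Here $D$ is nonsingular by Lemma~\ref{lem:reduced-dare-existence-conditions}, so each diagonal block is nonsingular. The inverse of a block-diagonal matrix is block diagonal with the same blocks inverted. Therefore $(D^{-1})_{pk}\ne0$ implies $P_D^*(p,k)=1$; we simply bound each block of $D^{-1}$ as full.

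Finally, for an edge column $q$,
\[
  (FZ)_{pq}=\sum_k (D^{-1})_{pk}G_{kq}.
\]
If this is nonzero, some $k$ has $P_D^*(p,k)=1$ and $P_G(k,q)=1$. That is exactly the Boolean product entry $(P_D^*P_G)(p,q)=1$, which gives \eqref{eq:separator-FZ-mask}. For the last column, $S^\top\ones=\ones$ and $C^\top H\ones$ are generically nonzero, so no zero pattern is claimed there. This column needs only a remark, not a proof.

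The argument is mostly bookkeeping once Corollary~\ref{cor:exact-incidence-compression} is available. The main care point is the entries of $C^\top HC$: I must use the exact entry formula \eqref{eq:exact-CtHC-entry} rather than only a support bound from $H\in\mathcal V_{\mathcal L}$. I also need to check the index convention, namely that $C_{s_p,q}$ in $P_G$ matches $(S^\top C)_{pq}$, where column $p$ of $S$ is $\hat e_{s_p}$. Beyond that, the transitive-closure step needs only symmetry and reflexivity of $P_D$, both immediate.
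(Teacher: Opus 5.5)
Your proposal is correct and follows the paper's proof essentially step by step. Both arguments bound the support of $D$ via $S^\top S$ and \eqref{eq:exact-CtHC-entry}, then use Lemma~\ref{lem:transitive-closure-classes} together with nonsingularity of $D$ to make $D^{-1}$ block diagonal, bound the edge-column support of $(S^\top-\beta C^\top H)Z$ by $P_G$, and combine these through the Boolean product. Your explicit check that $P_D$ is symmetric with unit diagonal supplies a hypothesis of Lemma~\ref{lem:transitive-closure-classes} that the paper uses without stating it.
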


\DIFdelbegin 
\DIFdel{An analogous bound can be obtained when $H\in{\mathcal V}_{\mathcal B}$ by replacing common boundary crossing in ${\mathcal L}$ with common boundary crossing in ${\mathcal B}$ in the definitions of $P_D$ and $P_G$.  This bound is more conservative because the collection ${\mathcal B}$ can add boundary crossings and hence can add nonzero entries to the masks.
}

\DIFdel{Computable support count.}
\DIFdel{Proposition~\ref{prop:general-separator-FZ-sparsity} is entirely combinatorial once $R$ and $S$ have been specified: these matrices determine $C=R-S$, the receiver-side vertex-set family ${\mathcal L}$, and the masks $P_D$ and $P_G$.  The number of forced zero edge-columns in $FZ$ is at least \(m^2-\lvert\{(p,q):(P_D^*P_G)(p,q)=1\}\rvert\).
There is no additional guaranteed zero in the mean column.  Thus 
a conservative zero-count bound can be computed before solving the Riccati equation.  Equivalently, since }\DIFdelend \DIFaddbegin \DIFadd{Let $\operatorname{nnz}(X)$ denote the number of nonzero entries of a matrix $X$.  Since }\DIFaddend $U=[FZ\;0]$\DIFdelbegin \DIFdel{has zero delay columns, the graph-only upper bound
}\DIFdelend \DIFaddbegin \DIFadd{, Proposition~\ref{prop:general-separator-FZ-sparsity} gives
}\DIFaddend \begin{equation}\label{eq:graph-mask-U-bound}
    \operatorname{nnz}(U)
    \le
    \lvert\{(p,q):(P_D^*P_G)(p,q)=1\}\rvert+m\DIFaddbegin \DIFadd{.
}\DIFaddend \end{equation}
\DIFdelbegin \DIFdel{can be evaluated from the edge orientations before solving for $H$.  }\DIFdelend The first term counts \DIFdelbegin \DIFdel{the }\DIFdelend possible nonzeros in the \DIFdelbegin \DIFdel{first $m$ }\DIFdelend \DIFaddbegin \DIFadd{incidence }\DIFaddend columns of $FZ$, \DIFdelbegin \DIFdel{while }\DIFdelend \DIFaddbegin \DIFadd{and }\DIFaddend the final $m$ \DIFdelbegin \DIFdel{is the safe contribution of the mean column }\DIFdelend \DIFaddbegin \DIFadd{allows all entries in the column corresponding to $\ones$ to be nonzero}\DIFaddend .

\DIFdelbegin \DIFdel{Under Assumption~\ref{ass:outward-rooted}, Lemma~\ref{lem:outward-rooted-orientation} gives the root vertex $r$.  For each non-root vertex $v$, deleting the edge $(p(v),v)$ leaves $T_v$ as the connected component containing $v$, so ${\mathcal R}(v)=T_v$.  Nonempty intersections of such sets are again rooted subtrees: if $\cap_{v\in J}T_v\ne\emptyset$, then the vertices in $J$ lie on one root-to-leaf path and the intersection equals $T_w$, where $w$ is the vertex in $J$ farthest from $r$.  Hence, in the outward-rooted case, \eqref{eq:component-lattice} gives
}\DIFdel{
    }{\DIFdel{\mathcal L}}\DIFdel{=\{T_v:v\in V\}.
}

\DIFdelend \begin{corollary}\DIFdelbegin 
\DIFdelend \DIFaddbegin \label{cor:outward-rooted-FZ-support}
\DIFaddend Under Assumption~\ref{ass:outward-rooted}, let \DIFdelbegin \DIFdel{$H$ be the stabilizing positive semidefinite solution }\DIFdelend \DIFaddbegin \DIFadd{$v_{\rm root}$ be the root }\DIFaddend from Lemma~\DIFdelbegin \DIFdel{\ref{lem:reduced-dare-existence-conditions}.  Assume $H\in{\mathcal V}_{\mathcal L}$.  Then }\DIFdel{
    H\in\operatorname{span}\{\chi_v\chi_v^\top:v\in V\}.
}
\DIFdelend \DIFaddbegin \DIFadd{\ref{lem:outward-rooted-orientation}.  Then $h(v)$ equals the number of edges on the path from $v_{\rm root}$ to $v$, and
}\begin{equation}\DIFadd{\label{eq:outward-subtree-span}
    H=q_0\ones\ones^\top
      +\sum_{v\ne v_{\rm root}}
      \left(\sum_{x\in T_v}\beta^{h(v)-h(x)}\right)^{-1}
      \chi_v\chi_v^\top.
}\end{equation}\DIFaddend 
\DIFdelbegin 

\DIFdel{Under Assumption~\ref{ass:outward-rooted}, let $H$ be the stabilizing positive semidefinite solution from Lemma~\ref{lem:reduced-dare-existence-conditions}, and
assume $H\in{\mathcal V}_{\mathcal L}$.  Let $F$ be the matrix in \eqref{eq:sr-known-H-F}, and let $Z$ be the matrix in \eqref{eq:Z-definition}}\DIFdelend \DIFaddbegin \DIFadd{Moreover, $C^\top HC$ is diagonal}\DIFaddend .  Let $I_v$ be the set of indices \DIFdelbegin \DIFdel{$i$ such that edge $e_i$ leaves vertex }\DIFdelend \DIFaddbegin \DIFadd{of edges leaving }\DIFaddend $v$, and let $i^-(v)$ be the index of the \DIFdelbegin \DIFdel{unique edge entering }\DIFdelend \DIFaddbegin \DIFadd{edge entering a non-root vertex }\DIFaddend $v$\DIFdelbegin \DIFdel{, when $v$ is not the root.  If edge $e_i$ has sender $p_i$, then the }\DIFdelend \DIFaddbegin \DIFadd{.  The }\DIFaddend nonzero entries in row $i$ of the first $m$ columns of $FZ$ are contained in
\DIFdelbegin \DIFdel{the columns
}\DIFdelend \begin{equation}\label{eq:outward-row-mask-U-bound}
    I\DIFdelbegin \DIFdel{_{p_i}}\DIFdelend \DIFaddbegin \DIFadd{_{s_i}}\DIFaddend \cup\{i^-(\DIFdelbegin \DIFdel{p}\DIFdelend \DIFaddbegin \DIFadd{s}\DIFaddend _i):\DIFdelbegin \DIFdel{\ p}\DIFdelend \DIFaddbegin \DIFadd{s}\DIFaddend _i\ne \DIFdelbegin \DIFdel{r}\DIFdelend \DIFaddbegin \DIFadd{v_{\rm root}}\DIFaddend \}.
\end{equation}
The \DIFdelbegin \DIFdel{mean column }\DIFdelend \DIFaddbegin \DIFadd{column corresponding to $\ones$ }\DIFaddend may be nonzero.
\end{corollary}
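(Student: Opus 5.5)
Under Assumption~\ref{ass:outward-rooted}, the plan is to specialize Theorem~\ref{thm:explicit-height-solution}, Corollary~\ref{cor:exact-incidence-compression} and Proposition~\ref{prop:general-separator-FZ-sparsity} to the outward-rooted orientation. Throughout, we use the fact from Lemma~\ref{lem:outward-rooted-orientation} that every non-root vertex $v$ has exactly one incoming edge, namely the edge from its parent $p(v)$.

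\emph{Step 1: the height function and the height components.}
Let $\operatorname{dist}(v)$ be the number of edges on the path from $v_{\rm root}$ to $v$. Every edge $p(v)\to v$ raises $\operatorname{dist}$ by exactly one, and $\operatorname{dist}(v_{\rm root})=0$ is the minimum. By the uniqueness in Lemma~\ref{lem:tree-height}, $h=\operatorname{dist}$.

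Next we identify ${\mathcal C}_a$ for $a\ge1$. The set $V_a$ consists of the vertices at depth at least $a$. For a vertex $v$ at depth exactly $a$, the subtree $T_v$ lies inside $V_a$ and is connected. Any edge leaving $T_v$ in the undirected sense is the edge $p(v)\to v$, and $p(v)$ has depth $a-1$, so $p(v)\notin V_a$. Hence $T_v$ is a full component of $G[V_a]$. Every vertex of $V_a$ lies in $T_v$ for its unique ancestor $v$ at depth $a$. Therefore
\[
{\mathcal C}_a=\{T_v: h(v)=a\}.
\]
Substituting into \eqref{eq:height-component-kappa} gives $\kappa_{a,T_v}=\bigl(\sum_{x\in T_v}\beta^{h(v)-h(x)}\bigr)^{-1}$. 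Summing over $a=1,\ldots,h_{\max}$ runs over each non-root vertex $v$ exactly once. Formula \eqref{eq:explicit-height-solution} then becomes \eqref{eq:outward-subtree-span}, since $\xi_{T_v}=\chi_v$.

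\emph{Step 2: diagonality of $C^\top HC$.}
For edge $e_i:s_i\to t_i$ we have $a_i=h(t_i)$, and $\Omega_i=T_{t_i}$ by Step 1. Distinct edges have distinct receivers under Assumption~\ref{ass:outward-rooted}. So the map $i\mapsto(a_i,\Omega_i)$ is injective, and every $\Gamma_{a,\Omega}$ is a singleton. By \eqref{eq:exact-CtHC-blocks}, $C^\top HC=\operatorname{diag}(\kappa_i)$.

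\emph{Step 3: the row support of $FZ$.}
Because the pairs $(a_i,\Omega_i)$ are distinct, the block condition in $P_D$ and $P_G$ holds only on the diagonal. This gives:
\begin{itemize}
\item $P_D(p,q)=1$ exactly when $s_p=s_q$. Equality of senders is already an equivalence relation, so $P_D^*=P_D$.
\item $P_G(p,q)=1$ exactly when $p=q$ or $C_{s_p,q}\neq0$, that is, when $e_q$ is incident to $s_p$. The edges incident to $s_p$ are those in $I_{s_p}$ together with the edge $i^-(s_p)$ when $s_p\neq v_{\rm root}$. The case $p=q$ is already included because $p\in I_{s_p}$.
\end{itemize}
Now compute the Boolean product. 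We have $(P_D^*P_G)(i,q)=1$ if and only if some edge $p$ has $s_p=s_i$ and $e_q$ is incident to $s_p=s_i$. The edge $p=i$ witnesses this whenever it holds, so the product's support in row $i$ is exactly $I_{s_i}\cup\{i^-(s_i):s_i\neq v_{\rm root}\}$. Proposition~\ref{prop:general-separator-FZ-sparsity} then yields \eqref{eq:outward-row-mask-U-bound}. The statement that the $\ones$ column may be nonzero is inherited directly from that proposition.

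\emph{Main obstacle.}
Step 1 is where care is needed. We must confirm that each height component is exactly a rooted subtree, and not a union of subtrees joined through vertices at lower levels. The key point is that a component at level $a$ cannot contain a vertex of depth less than $a$, and that sibling subtrees connect only through their parent, which lies outside $V_a$. The remaining steps are direct bookkeeping with the masks.
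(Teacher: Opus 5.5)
Your proposal is correct and follows the paper's proof essentially step for step: you identify $h$ with depth from the root, recognize the level-$a$ height components as the rooted subtrees $T_v$ with $h(v)=a$, and use distinct receivers to make the pairs $(a_i,\Omega_i)$ distinct so that $C^\top HC$ is diagonal. You then read off the masks $P_D^*$ and $P_G$ and apply Proposition~\ref{prop:general-separator-FZ-sparsity}, and your extra justifications (that $T_v$ is a full component of $G[V_a]$ because its only boundary edge is $p(v)\to v$, and that $P_D^*=P_D$) supply details the paper states without argument.
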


\DIFdelbegin \DIFdel{The row-wise bound in }\DIFdelend Corollary~\ref{cor:outward-rooted-FZ-support} \DIFdelbegin \DIFdel{is conditional on $H\in{\mathcal V}_{\mathcal L}$.  Under this hypothesis, \eqref{eq:sr-known-H-U} gives $U=[FZ\;0]$, so row $i$ of $U$ has support contained in the columns in \eqref{eq:outward-row-mask-U-bound}, plus the mean column.  Hence \(\operatorname{nnz}(U) \le \sum_{i=1}^m\bigl(|I_{p_i}|+1_{\{p_i\ne r\}}+1\bigr)\),
}\DIFdelend \DIFaddbegin \DIFadd{and \eqref{eq:sr-known-H-U} give
}\[
\DIFadd{\operatorname{nnz}(U)
\le
\sum_{i=1}^m
\left(|I_{s_i}|+1_{\{s_i\ne v_{\rm root}\}}+1\right),
}\]
\DIFaddend where the last \DIFdelbegin \DIFdel{$1$ accounts for the mean column }\DIFdelend \DIFaddbegin \DIFadd{term counts the column corresponding to $\ones$.  For a directed path, each set $I_v$ has at most one element, so the number of nonzeros in $U$ grows linearly with $N$}\DIFaddend .
\DIFdelbegin \DIFdel{If one separator component is large, $D^{-1}$ may be dense on that component; Assumption~\ref{ass:outward-rooted} is useful precisely because it makes the components small and identifiable as sibling and parent-sibling neighborhoods.
}

\DIFdel{For example, the valid sender--receiver branching tree \(1\to2, \qquad 1\to3, \qquad 2\to4, \qquad 2\to5\)
with $\beta=1/2$ has the reduced solution
}
    \DIFdel{H=
    \begin{bmatrix}
        0.1538&0.1538&0.1538&0.1538&0.1538\\
        0.1538&0.3538&0.1538&0.3538&0.3538\\
        0.1538&0.1538&1.1538&0.1538&0.1538\\
        0.1538&0.3538&0.1538&1.3538&0.3538\\
        0.1538&0.3538&0.1538&0.3538&1.3538
    \end{bmatrix},
}
\DIFdel{which lies in the subtree span in \eqref{eq:outward-subtree-span}: it is a sum of one global rank-one term, one subtree term for the branch rooted at vertex $2$, and leaf-subtree terms.  In this case
}
    \DIFdel{D=
    \begin{bmatrix}
        1.1&1&0&0\\
        1&1.5&0&0\\
        0&0&1.5&1\\
        0&0&1&1.5
    \end{bmatrix},
}
\DIFdel{which is block diagonal over the sibling groups leaving vertices $1$ and $2$, and
}
    \DIFdel{FZ=\frac{1}{\sqrt2}
    \begin{bmatrix}
        -1&0&0&0&0.8462\\
        0&-1&0&0&-0.2308\\
        0.4&0&-1&0&0.2\\
        0.4&0&0&-1&0.2
    \end{bmatrix}.
}
\DIFdel{Thus the same graph-adapted denominator basis works for this branching tree; only sibling-block solves replace the scalar denominators of the path case.
}\DIFdelend

\section{Online feedback implementation}\label{sec:implementation-parallelism}

\DIFdelbegin \DIFdel{In addition to }\DIFdelend \DIFaddbegin \DIFadd{Under }\DIFaddend Assumption~\ref{ass:data}, \DIFdelbegin \DIFdel{assume }\DIFdelend \DIFaddbegin \DIFadd{Theorem~\ref{thm:explicit-height-solution} shows }\DIFaddend that the stabilizing positive semidefinite solution $H$ of the reduced DARE \eqref{eq:sr-reduced-H-DARE-expanded} belongs to ${\mathcal V}_{\mathcal L}$.  The online RI feedback is applied by a denominator solve followed by the input multiplication.  Proposition~\ref{prop:general-separator-FZ-sparsity} and \eqref{eq:graph-mask-U-bound} give the graph-mask bound for the final product.  Corollary~\ref{cor:outward-rooted-FZ-support} gives a sharper row support for outward-rooted trees.  Given $x_k=(s_k,d_k)$, first solve \eqref{eq:sr-known-H-Y} for the variables in \eqref{eq:sr-known-H-variables}.  Equivalently, set $\delta_k=d_k$ and solve \eqref{eq:sr-known-H-w-representation} for $(\eta^E_k,\eta^0_k)$.  Then apply \eqref{eq:sr-known-H-U}.

For clarity, the online RI controller can be applied as follows after the offline construction of $Y$ and $U=[FZ\;0]$.
\begin{enumerate}
    \item Read the current state $x_k=(s_k,d_k)$ and set the delay component $\delta_k=d_k$.
    \item Solve the denominator equation \eqref{eq:sr-known-H-Y}, or equivalently \eqref{eq:sr-known-H-w-representation}, for the graph variables $(\eta^E_k,\eta^0_k)$.
    \item Form the compressed coordinate vector $\eta_k$ from $(\eta^E_k,\eta^0_k,\delta_k)$.
    \item Evaluate the control by the sparse product $u_k=U\eta_k$.
\end{enumerate}
The offline stage therefore stores $Y$, its chosen sparse factorization or tree-solve data, and $U$; the online stage consists only of the denominator solve and the final sparse input multiplication.

The solve in \eqref{eq:sr-known-H-Y} is a tree computation.  The scalar $\eta^0_k$ is the coefficient of the mean-storage column in \eqref{eq:sr-known-H-Y}; after it is fixed, the edge variables $\eta^E_k$ solve the incidence equation in \eqref{eq:sr-known-H-w-representation}.  On a rooted tree this can be implemented by subtree aggregations: messages from leaves to the root compute subtree sums, and a downward pass assigns the edge variables.  Disjoint subtrees can be processed in parallel until their messages meet.

After the variables in \eqref{eq:sr-known-H-variables} are available, the final step is only the sparse matrix-vector product in \eqref{eq:sr-known-H-U}.  \DIFdelbegin \DIFdel{If }\DIFdelend \DIFaddbegin \DIFadd{Under }\DIFaddend Assumption~\ref{ass:data}\DIFdelbegin \DIFdel{holds and the stabilizing positive semidefinite solution $H$ of \eqref{eq:sr-reduced-H-DARE-expanded} belongs to ${\mathcal V}_{\mathcal L}$, the }\DIFdelend \DIFaddbegin \DIFadd{, Theorem~\ref{thm:explicit-height-solution} and the }\DIFaddend graph mask in \eqref{eq:separator-FZ-mask} \DIFdelbegin \DIFdel{gives }\DIFdelend \DIFaddbegin \DIFadd{give }\DIFaddend the stored edge-column supports.  Under Assumption~\ref{ass:outward-rooted}, Corollary~\ref{cor:outward-rooted-FZ-support} gives the smaller row supports in \eqref{eq:outward-row-mask-U-bound}.  Thus the online implementation consists of a solve using \eqref{eq:sr-known-H-Y} followed by a sparse matrix-vector product using \eqref{eq:sr-known-H-U}.  The sparse matrix-vector product is parallel over the nonzero rows and row supports of the stored matrix.

\section{Numerical results}\label{sec:numerical-li-ri-comparison}

We compare the factor sparsity of the RI implementation with the factor sparsity displayed by Adlercreutz and Pates~\cite{AdlercreutzPates2026}.  The comparison covers their path example, one larger directed tree, and three orientations of a five-vertex tree.  One orientation has two edges with the same receiving vertex and therefore violates Assumption~\ref{ass:outward-rooted}, although it satisfies Assumption~\ref{ass:data}.  On the LI side we report $K_1$ and $K_2$.  On the RI side we report $Y$ and $U=[FZ\;0]$.  Numerical entries are displayed only for the path, because Adlercreutz and Pates report only sparsity patterns for the other examples.

\DIFaddbegin
\begin{figure*}[t]
\centering
\begin{tikzpicture}[x=2cm,y=2cm,vertex/.style={circle,draw,fill=white,minimum size=5.5mm,inner sep=0pt,font=\scriptsize},edge/.style={-{Latex[length=1.8mm]},semithick}]
  \node[font=\small] at (0.8,2.75) {(a) Large directed tree};
  \node[vertex] (v21) at (0,0) {21};
  \node[vertex] (v14) at (-0.866,0.5) {14};
  \node[vertex] (v17) at (0,1) {17};
  \node[vertex] (v20) at (0.866,0.5) {20};
  \node[vertex] (v5) at (-1.851,0.326) {5};
  \node[vertex] (v12) at (-1.366,1.366) {12};
  \node[vertex] (v3) at (-1.866,2.232) {3};
  \node[vertex] (v8) at (1.366,-0.366) {8};
  \node[vertex] (v13) at (1.732,0) {13};
  \node[vertex] (v16) at (1.866,0.5) {16};
  \node[vertex] (v18) at (1.732,1) {18};
  \node[vertex] (v19) at (1.366,1.366) {19};
  \node[vertex] (v11) at (2.598,-0.5) {11};
  \node[vertex] (v4) at (3.464,-1) {4};
  \node[vertex] (v9) at (2.832,0.241) {9};
  \node[vertex] (v10) at (2.832,0.759) {10};
  \node[vertex] (v1) at (2.732,1) {1};
  \node[vertex] (v6) at (2.672,1.342) {6};
  \node[vertex] (v7) at (2.498,1.643) {7};
  \node[vertex] (v15) at (2.232,1.866) {15};
  \node[vertex] (v2) at (3.612,1.684) {2};
  \foreach \a/\b in {21/14,21/17,21/20,14/5,14/12,12/3,20/8,20/13,20/16,20/18,20/19,13/11,11/4,16/9,16/10,18/1,18/6,18/7,18/15,6/2}
    \draw[edge] (v\a)--(v\b);
\end{tikzpicture}
\par\vspace{0.8em}
\begin{tikzpicture}[x=1.35cm,y=1.35cm,vertex/.style={circle,draw,fill=white,minimum size=6mm,inner sep=0pt,font=\small},edge/.style={-{Latex[length=2mm]},semithick}]
  \node[font=\small] at (0.35,2.0) {(b) Outward orientation};
  \node[vertex] (a4) at (0,0) {4};
  \node[vertex] (a2) at (-0.707,0.707) {2};
  \node[vertex] (a3) at (0.707,0.707) {3};
  \node[vertex] (a1) at (1.414,1.414) {1};
  \node[vertex] (a5) at (0,-1) {5};
  \draw[edge] (a3)--(a1);
  \draw[edge] (a4)--(a2);
  \draw[edge] (a4)--(a3);
  \draw[edge] (a5)--(a4);
\end{tikzpicture}
\hspace{1.4cm}
\begin{tikzpicture}[x=1.35cm,y=1.35cm,vertex/.style={circle,draw,fill=white,minimum size=6mm,inner sep=0pt,font=\small},edge/.style={-{Latex[length=2mm]},semithick}]
  \node[font=\small] at (0.4,1.6) {(c) Merging orientation};
  \node[vertex] (b4) at (0,0) {4};
  \node[vertex] (b2) at (-0.866,0.5) {2};
  \node[vertex] (b3) at (0.866,0.5) {3};
  \node[vertex] (b1) at (1.732,1) {1};
  \node[vertex] (b5) at (0,-1) {5};
  \draw[edge] (b3)--(b1);
  \draw[edge] (b2)--(b4);
  \draw[edge] (b3)--(b4);
  \draw[edge] (b5)--(b4);
\end{tikzpicture}
\hspace{1.4cm}
\begin{tikzpicture}[x=1.35cm,y=1.35cm,vertex/.style={circle,draw,fill=white,minimum size=6mm,inner sep=0pt,font=\small},edge/.style={-{Latex[length=2mm]},semithick}]
  \node[font=\small] at (0,2.0) {(d) Mixed path orientation};
  \node[vertex] (c1) at (1.414,1.414) {1};
  \node[vertex] (c2) at (0.707,0.707) {2};
  \node[vertex] (c3) at (0,0) {3};
  \node[vertex] (c4) at (-0.707,-0.707) {4};
  \node[vertex] (c5) at (-1.414,-1.414) {5};
  \draw[edge] (c1)--(c2);
  \draw[edge] (c2)--(c3);
  \draw[edge] (c4)--(c3);
  \draw[edge] (c5)--(c4);
\end{tikzpicture}
\caption{Directed-tree topologies used in the comparison with Adlercreutz and Pates~\cite{AdlercreutzPates2026}, redrawn here so that the numerical examples are self-contained.  Panel (a) corresponds to their Fig.~6, while panels (b)--(d) correspond to their Figs.~7(a)--7(c), respectively.}
\label{fig:comparison-graphs}
\end{figure*}
\DIFaddend

For the four-storage path in \eqref{eq:four-storage-sr-data} with $\beta=1/2$, the LI factors, after changing from the interleaved Adlercreutz--Pates state ordering to our storage--delay ordering, are
\[
    K_1=
    \begin{bmatrix}
        3/2&0&0\\
        -1/2&7/6&0\\
        0&-1/2&15/14
    \end{bmatrix}.
\]
{\small
\[
    K_2=\frac{1}{2\sqrt2}
    \begin{bmatrix}
        1&-2&0&0&1&-2&0\\
        0&1&-2&0&0&1&-2\\
        0&0&1&-2&0&0&1
    \end{bmatrix}.
\]
}
The corresponding compressed input factor is \eqref{eq:four-storage-sr-FZ}; equivalently, the online factors are $Y$ and $U=[FZ\;0]$.  For the remaining four examples we report only nonzero patterns and nonzero counts.

The corresponding nonzero counts are
{\scriptsize
\[
\DIFdelbegin 
\DIFdelend \DIFaddbegin \begin{array}{c|cc|ccc}
\text{example} & \operatorname{nnz}(K_1) & \operatorname{nnz}(K_2)
& \operatorname{nnz}(Y) & \operatorname{ub}_{\rm graph} & \operatorname{nnz}(U)\\
\hline
\text{path} & 5 & 11 & 13 & 8 & 8\\
\text{large tree} & 79 & 77 & 81 & 57 & 57\\
\text{outward tree} & 9 & 15 & 17 & 9 & 9\\
\text{merging tree} & 12 & 18 & 17 & 11 & 11\\
\text{mixed path} & 10 & 16 & 17 & 14 & 14
\end{array}\DIFaddend 
\]
}
Here \DIFdelbegin \DIFdel{$\operatorname{ub}_{\mathcal L}$ is the a priori }\DIFdelend \DIFaddbegin \DIFadd{$\operatorname{ub}_{\rm graph}$ denotes the right-hand side of }\DIFaddend \eqref{eq:graph-mask-U-bound}\DIFdelbegin \DIFdel{, and $\operatorname{ub}_{\mathcal B}$ is the corresponding bound obtained by using ${\mathcal B}$ in place of ${\mathcal L}$.  Both are computed from the sender--receiver pattern before solving the DARE.  The ${\mathcal B}$ bound is more conservative in all five examples}\DIFdelend .  \DIFaddbegin \DIFadd{By Theorem~\ref{thm:explicit-height-solution}, this bound holds under Assumption~\ref{ass:data} without an additional hypothesis on $H$.  }\DIFaddend For these examples the bound is tight, so \DIFdelbegin \DIFdel{$\operatorname{ub}_{\mathcal L}$ }\DIFdelend \DIFaddbegin \DIFadd{$\operatorname{ub}_{\rm graph}$ }\DIFaddend agrees with $\operatorname{nnz}(U)$.  The comparison is not monotone in the factor counts: $U$ is sparser than $K_2$ in all five examples, but $Y$ can have more nonzeros than $K_1$.

\DIFdelbegin
\DIFdel{We also count floating point operations for applying the factorizations used in the sparse inverse step.  These counts are intended as structural application counts, not as wall-clock benchmarks.  The matrix-vector products $K_2x$ and $U\eta$ cost $\operatorname{nnz}(K_2)$ and $\operatorname{nnz}(U)$ under this convention; the displayed data isolate the additional solve cost.  We use sparse lower-upper factorization with column approximate minimum degree ordering.  The factorization is performed offline, so we count only the use of the factors.  A forward or backward triangular solve is level-scheduled by its dependency graph; entries in the table give the flop counts on each parallel level.  Thus the sums of the forward and backward level costs are the serial solve costs used here, while the number and size of the levels indicate the available parallelism.  For examples where only sparsity patterns are displayed by Adlercreutz and Pates, the counts are structural sparse lower-upper factorization counts for generic nonzero values with the displayed pattern.}
\DIFdelend
\DIFaddbegin
\DIFadd{We use structural scalar-operation counts for applying the factorizations in the sparse inverse step.  Processing one stored nonzero coefficient in a matrix-vector product or triangular solve counts as one operation.  Under this convention, the products $K_2x$ and $U\eta$ cost $\operatorname{nnz}(K_2)$ and $\operatorname{nnz}(U)$ operations.  These are not conventional hardware flop counts or wall-clock benchmarks.  We use sparse lower--upper factorization with column approximate minimum degree ordering.  The factorization is performed offline, so only application of the factors is counted.  A forward or backward triangular solve is level-scheduled by its dependency graph; entries in the table give the operation counts on each parallel level.  Their sums give the serial solve counts, while the number and size of the levels indicate available parallelism.  When Adlercreutz and Pates display only sparsity patterns, the reported values are structural counts for generic nonzero entries with those patterns.}
\DIFaddend
\[
\resizebox{\columnwidth}{!}{$
\begin{array}{c|cc|cc}
\text{example}
& \multicolumn{2}{c|}{K_1\text{-solve, LI}}
& \multicolumn{2}{c}{Y\text{-solve, RI}}\\
& \text{forward levels} & \text{backward levels}
& \text{forward levels} & \text{backward levels}\\
\hline
\text{path} & (1,2,2) & (3) & (4,2,2,3) & (4,4,3)\\
\text{large tree} & (7,10,6,9,6,3,4,5) & (7,6,7,9,5,4,5,6,7,8) & (30,6,3,3,17,7) & (21,12,28)\\
\text{outward tree} & (2,3,3) & (3,2) & (6,2,3,2) & (5,6,3)\\
\text{merging tree} & (1,2,2,3) & (1,2,3,2) & (6,2,3,2) & (5,8)\\
\text{mixed path} & (2,5) & (1,4,3) & (6,2,3,3) & (5,4,4)
\end{array}
$}
\]
The corresponding product costs, solve costs, and overall application costs are
\[
\resizebox{\columnwidth}{!}{$
\begin{array}{c|cc|c|ccc|ccc}
\text{example} & m & n & Kx\text{ dense}
& \multicolumn{3}{c|}{\text{LI}} & \multicolumn{3}{c}{\text{RI}}\\
& & & & K_2x & K_1\text{-solve} & \text{total} & U\eta & Y\text{-solve} & \text{total}\\
\hline
\text{path} & 3 & 7 & 21 & 11 & 8 & 19 & 8 & 22 & 30\\
\text{large tree} & 10 & 21 & 210 & 77 & 114 & 191 & 57 & 127 & 184\\
\text{outward tree} & 4 & 9 & 36 & 15 & 13 & 28 & 9 & 27 & 36\\
\text{merging tree} & 4 & 9 & 36 & 18 & 16 & 34 & 11 & 26 & 37\\
\text{mixed path} & 4 & 9 & 36 & 16 & 15 & 31 & 14 & 27 & 41
\end{array}
$}
\]
A multifrontal implementation would organize these operations through frontal matrices and an assembly tree, rather than through the simple triangular level schedule reported here.  Ordering choices, symbolic factorization, numerical pivoting, assembly and merge overheads, memory traffic, and communication costs can all affect the practical ranking of LI and RI.  The table should therefore be read as a concrete sparse lower-upper factorization application count and a parallelism proxy, not as a complete prediction of multifrontal or parallel runtime.  The column $Kx$ dense is $mn$, the product cost for applying a dense feedback gain without either factorization.

The application-cost table separates dense multiplication, sparse product costs, and solve costs.  The reported RI product counts use the computed matrix $U$ from \eqref{eq:sr-known-H-U}.  They show that the RI product $U\eta$ is the sparsest final multiplication in all listed examples, while both factored products have fewer nonzeros than dense $Kx$ multiplication.

The solve part gives the main tradeoff.  In the smaller examples, the RI solve with $Y$ is usually more expensive than the LI solve with $K_1$, so the sparser RI product does not always reduce the total count.  For the large tree, the RI product saving makes the RI total slightly smaller than the LI total.

The level schedules show the available parallelism in the sparse solves.  They indicate that RI can reduce solve depth for the larger tree, but may also create wider levels.  Thus the RI implementation is most competitive when the product saving and the parallelism in the $Y$ solve offset the extra denominator-solve work.

\DIFaddbegin \DIFadd{As a larger matched random check, we generated $20$ outward-rooted directed trees for each storage-vertex count $N\in\{10,20,30,40,50\}$.  For each sample, we drew a uniformly random labelled undirected tree using a Pr\"ufer sequence, selected a root uniformly, and directed every edge away from that root.  We formed both the LI factors of Adlercreutz and Pates~\cite{AdlercreutzPates2026} and the RI factors in \eqref{eq:sr-known-H-Y} and \eqref{eq:sr-known-H-U}.  The product counts are the numbers of nonzeros in $K_2$ and $U$, and both solve counts use sparse lower--upper factorization with column approximate minimum degree ordering, as in the application-cost table.  The averages are
}\DIFaddend
\[
\resizebox{\columnwidth}{!}{$
\begin{array}{c|c|ccc|ccc}
N & Kx\text{ dense}
& K_2x & K_1\text{-solve} & \text{LI total}
& U\eta & Y\text{-solve} & \text{RI total}\\
\hline
10 & 171 & 34.05 & 33.30 & 67.35 & 25.05 & 72.60 & 97.65\\
20 & 741 & 74.25 & 76.75 & 151.00 & 55.20 & 148.65 & 203.85\\
30 & 1711 & 114.20 & 121.40 & 235.60 & 85.20 & 228.20 & 313.40\\
40 & 3081 & 154.10 & 160.15 & 314.25 & 115.10 & 302.15 & 417.25\\
50 & 4851 & 194.15 & 198.70 & 392.85 & 145.15 & 376.25 & 521.40
\end{array}
$}
\]
\DIFaddbegin \DIFadd{Both factorizations remain far below dense feedback multiplication, and the RI product is sparser on average.  With the sparse lower--upper factorization used here, however, the LI total is lower on these outward-rooted samples because its smaller solve outweighs the RI product saving.  Thus the comparison is a tradeoff rather than a universal runtime ranking.}\DIFaddend

\DIFaddbegin
\DIFadd{The purpose of the RI implementation is therefore not to replace the LI implementation uniformly.  Its distinctive advantage is that Theorem~\ref{thm:explicit-height-solution} determines the reduced Riccati solution from the directed tree and $\beta$, while Proposition~\ref{prop:general-separator-FZ-sparsity} determines possible nonzero entries before the numerical factors are formed.  The RI implementation is most attractive when the sparse final multiplication, the graph-local coordinate solve, or the available solve parallelism is important.  The LI implementation can be preferable when its smaller input-variable solve dominates the online cost.

The reported counts compare online application only; they exclude construction of the factors.  The RI factors can be assembled from the graph formulas in Theorem~\ref{thm:explicit-height-solution} and Corollary~\ref{cor:exact-incidence-compression}, whereas the LI factors are obtained from the shift-operator Cholesky construction of Adlercreutz and Pates~\cite{AdlercreutzPates2026}.  A scalar-operation comparison of these offline procedures depends on how the shift operators are represented and evaluated, so no offline-cost ranking is claimed here.}
\DIFaddend

\subsection{A cyclic example outside the tree theory}\label{subsec:cyclic-example-future-work}

The comparison in this section concerns tree examples.  To indicate why extending the theory beyond trees is a genuine next step, append to the original path example a fourth edge with receiver column $\hat e_4$ and sender column $\hat e_1$.  Take $R=[\hat e_1\ \hat e_2\ \hat e_3\ \hat e_4]$ and $S=[\hat e_2\ \hat e_3\ \hat e_4\ \hat e_1]$.
This example is not rank deficient, since $\rank R=4=m$, but it violates the tree restriction in Assumption~\ref{ass:data}: the added edge closes the directed cycle $1\to4\to3\to2\to1$.  Thus \DIFdelbegin \DIFdel{Proposition~\ref{prop:intersection-span-invariance} }\DIFdelend \DIFaddbegin \DIFadd{Theorem~\ref{thm:explicit-height-solution} }\DIFaddend and Proposition~\ref{prop:general-separator-FZ-sparsity} do not apply.  For $\beta=1/2$, direct substitution into \eqref{eq:sr-reduced-H-DARE-expanded} gives the solution $H=0$.  Substituting $\beta=1/2$ and $r=1/\sqrt2$ into the matrix $F$ from \eqref{eq:sr-known-H-F}, and writing $c_i$ for column $i$ of $C$, gives
\[
    FZ_{\rm cyc}=\frac{1}{\sqrt2}
    \begin{bmatrix}
        -1&1&0&1\\
        0&-1&1&1\\
        0&0&-1&1\\
        1&0&0&1
    \end{bmatrix}.
\]
The coordinate matrix used for this calculation is
\[
    Z_{\rm cyc}=\begin{bmatrix}c_1&c_2&c_3&\ones\end{bmatrix}.
\]
This calculation should be read only as motivation for future work.  It shows that sparse RI coordinates may still exist for some cyclic networks, but \DIFdelbegin \DIFdel{the tree-based proofs of the intersection family}\DIFdelend \DIFaddbegin \DIFadd{Theorem~\ref{thm:explicit-height-solution}}\DIFaddend , Proposition~\ref{prop:general-separator-FZ-sparsity}, and the matrix $Z$ in \eqref{eq:Z-definition} do not cover this case.

\section{Conclusions}

\DIFaddbegin
\DIFadd{We studied optimal feedback for sender--receiver transportation linear-quadratic control on directed trees.  The optimal control signal can be computed by a sparse linear solve followed by a sparse input multiplication, without imposing communication or decentralization constraints.  For every orientation of a connected tree, the stabilizing reduced Riccati solution has an explicit formula.  Its incidence products yield a support bound determined by the directed tree and the discount factor.

The numerical comparisons show that both factored implementations can require fewer operations than dense feedback multiplication.  The proposed final input multiplication is sparser in the tested examples, but its preceding solve can be more expensive than the Cholesky-based left-inverse solve.  Neither implementation therefore dominates in every case.  Future work should treat cyclic graphs, more general costs, and direct numerical construction of the sparse factors.}
\DIFaddend

\appendix[Proofs]\label{app:proofs}

\begin{proof}[Proof of Lemma~\ref{lem:tree-incidence-rank-Z}]
A connected tree with $N$ vertices has $N-1$ edges, so $m=N-1$.  For each edge $e$, the corresponding column of $C$ has one entry $+1$, one entry $-1$, and all other entries zero.  Therefore $\ones^\top C=0$, so $\rank C\le N-1$.
It remains to prove $\rank C\ge N-1$.  Suppose $y\in\R^N$ satisfies $C^\top y=0$.  For an edge $e:s\to r$, the $e$th component of $C^\top y$ is \(\hat e_r^\top y-\hat e_s^\top y=y_r-y_s\).
Thus $C^\top y=0$ implies that $y_r=y_s$ for every directed edge $s\to r$.  Since the underlying undirected graph is connected, equality propagates along undirected paths, and all components of $y$ are equal.  Hence $\ker C^\top=\operatorname{span}\{\ones\}$, so $\rank C=N-1$ by the rank-nullity theorem.
To prove that $Z$ is nonsingular, suppose $Z\begin{bmatrix}\alpha^\top& \gamma\end{bmatrix}^\top=0$ with $\alpha\in\R^m$ and $\gamma\in\R$.  Then \(C\alpha+\gamma\ones=0\).
Multiplying by $\ones^\top$ gives \(\ones^\top C\alpha+\gamma\ones^\top\ones=\gamma N=0\),
so $\gamma=0$.  Then $C\alpha=0$.  Since $C$ has $m=N-1$ columns and $\rank C=N-1$, its columns are linearly independent, and $\alpha=0$.  Therefore $\ker Z=\{0\}$, and the square matrix $Z$ is nonsingular.
\end{proof}

\begin{lemma}\label{lem:tree-edge-column-rank}
Let $G$ satisfy Assumption~\ref{ass:data}.  Fix $z\in\mathbb C$ with $|z|\ge1$ and $0<r<1$.  Let $M_z\in\mathbb C^{N\times m}$ have column $r\hat e_b-z\hat e_a$ for each edge $e_i:a\to b$.  Then $M_z$ has full column rank.
\end{lemma}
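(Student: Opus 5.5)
We show that $M_z\alpha=0$ forces $\alpha=0$ by peeling leaves off the tree. Suppose $\alpha\in\mathbb C^m$ satisfies $M_z\alpha=0$. Row $v$ of this equation reads
\[
    r\sum_{i:\,t_i=v}\alpha_i\;-\;z\sum_{i:\,s_i=v}\alpha_i=0,
\]
where $e_i:s_i\to t_i$. We use induction on $N$; the case $N=1$ is trivial since then $m=0$.

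For $N\ge2$, pick a leaf $v$ of the underlying undirected tree, with its unique incident edge $e_j$. Row $v$ then has a single nonzero coefficient. That coefficient is $r$ if $v=t_j$, and $-z$ if $v=s_j$. Both are nonzero because $r>0$ and $|z|\ge1$, so $\alpha_j=0$.

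Deleting $v$ and $e_j$ leaves a connected tree $G'$ on $N-1$ vertices. The remaining rows of $M_z\alpha=0$, restricted to the entries $\alpha_i$ with $i\ne j$, are exactly the equation $M'_z\alpha'=0$ for $G'$, because the contribution of $e_j$ vanishes. By induction $\alpha'=0$, so $\alpha=0$ and $M_z$ has full column rank.

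Concretely, the argument orders the edges by repeatedly removing leaves. In that order the corresponding square submatrix, formed from the leaf rows, is triangular with diagonal entries in $\{r,-z\}$, all nonzero.

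This argument does not use the hypotheses $|z|\ge1$ and $r<1$ beyond $z\ne0$ and $r\ne0$. We accept that the lemma is stated more strongly than the proof requires; presumably the extra conditions matter in how the lemma is applied, for example to the Hautus-type tests behind Theorem~\ref{thm:sender-receiver-dare-unique}. The only point needing care is checking that the restricted system on $G'$ has exactly the same form, so that the induction closes. There is no real obstacle.
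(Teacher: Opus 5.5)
Your proof is correct and follows the paper's argument essentially verbatim: both pick a leaf, note that its row has the single nonzero coefficient $r$ or $-z$, so the variable for its unique incident edge vanishes, and then induct on the smaller tree (the paper inducts on $m$, you on $N=m+1$, which is equivalent). Your remark that only $r\ne0$ and $z\ne0$ are used is also accurate, since the paper's proof likewise invokes $r>0$ and $|z|\ge1$ only to make these coefficients nonzero.
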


\begin{proof}
Suppose $M_zy=0$.  Since $G$ is a tree by Assumption~\ref{ass:data}, it has a leaf $v$.  Let $e_i$ be the unique edge incident to $v$.  In row $v$ of $M_zy=0$, the only nonzero term comes from column $i$.  Its coefficient is $r$ if $e_i$ enters $v$, and it is $-z$ if $e_i$ leaves $v$.  Both coefficients are nonzero because $r>0$ and $|z|\ge1$.  Hence $y_i=0$.
Remove $v$ and $e_i$.  For every remaining vertex, the equation in $M_zy=0$ differs from the corresponding equation for the smaller directed tree only by the contribution of $e_i$, and that contribution is zero because $y_i=0$.  Therefore the remaining entries of $y$ satisfy the same equation for the smaller directed tree.  Repeating this argument removes one edge at each step.  Induction on $m$ gives $y=0$, so $M_z$ has full column rank.
\end{proof}

\begin{proof}[Proof of Lemma~\ref{lem:outward-rooted-orientation}]
For each $i$, column $i$ of $R$ is the standard basis vector at the receiving vertex of edge $e_i$.  If two different directed edges had the same receiving vertex, then the corresponding two columns of $R$ would be equal.  Equal columns are linearly dependent, contradicting Assumption~\ref{ass:outward-rooted}.  Hence no two directed edges have the same receiving vertex.
By Lemma~\ref{lem:tree-incidence-rank-Z}, the graph has $m=N-1$ edges.  Since no two directed edges have the same receiving vertex, exactly $N-1$ vertices receive an edge.  Hence exactly one vertex receives no edge; this is $v_{\rm root}$ in the lemma statement.
It remains to prove that every edge is directed away from $v_{\rm root}$.  Let $v\ne v_{\rm root}$.  Since $G$ is connected by Assumption~\ref{ass:data}, there is an undirected path from $v_{\rm root}$ to $v$.  Write this path as \(v_{\rm root}=v_0,\ v_1,\ldots,\ v_\ell=v\).
We prove by induction on $j$ that the edge between $v_{j-1}$ and $v_j$ is directed $v_{j-1}\to v_j$.  For $j=1$, the edge cannot be directed $v_1\to v_{\rm root}$, because $v_{\rm root}$ receives no edge.  Therefore it is directed $v_{\rm root}\to v_1$.
Assume that the edge between $v_{j-2}$ and $v_{j-1}$ is directed $v_{j-2}\to v_{j-1}$ for some $2\le j\le\ell$.  Then $v_{j-1}$ already receives the edge from $v_{j-2}$.  The edge between $v_{j-1}$ and $v_j$ cannot be directed $v_j\to v_{j-1}$, because no two directed edges have the same receiving vertex.  Therefore it is directed $v_{j-1}\to v_j$.
The induction shows that each edge on the path from $v_{\rm root}$ to $v$ is directed away from $v_{\rm root}$.  Since $v\ne v_{\rm root}$ was arbitrary and every edge lies on the path from $v_{\rm root}$ to one of its endpoints, every edge of $G$ is directed away from $v_{\rm root}$.
\end{proof}

\begin{proof}[Proof of Theorem~\ref{thm:sender-receiver-dare-unique}]
We apply Hansson and Hagander~\cite[Theorem~1]{HanssonHagander1999} to \eqref{eq:dare} with the stage-cost matrix $Q$ in Section~\ref{sec:problem}.  The block matrix required by Hansson and Hagander~\cite[Theorem~1]{HanssonHagander1999} is positive semidefinite because $Q\succeq 0$.
The pair $(A,B)$ is stabilizable because $A$ is block upper triangular with diagonal blocks $rI$ and $0$, and $0<\beta<1$ with $r=\sqrt\beta$ gives $0<r<1$.
It remains to verify the rank condition in Hansson and Hagander~\cite[Theorem~1]{HanssonHagander1999}.  For every $|z|\ge1$, the matrix
\[
    \begin{bmatrix}
        A-zI&B\\
        Q&0
    \end{bmatrix}
\]
must have full column rank.  Let $x=(s,d)$, define $f_z(s,d,u)=(r(s+Rd)-zs-Su,-zd+u)$ and $g(s,d,u)=(s,0)$, and suppose the product of the displayed matrix with $(x,u)$ is zero.  Then $f_z(s,d,u)=0$ and $g(s,d,u)=0$.  The equation $g(s,d,u)=0$ gives $s=0$.  Substituting $s=0$ into $f_z(s,d,u)=0$ gives
$(rR-zS)d=0$ and $u=zd$.
The matrix $rR-zS$ has column $r\hat e_b-z\hat e_a$ for each edge $e_i:a\to b$.  Lemma~\ref{lem:tree-edge-column-rank} gives $d=0$.  Hence $u=zd=0$ and $s=0$, so $x=0$.  Thus the rank condition in Hansson and Hagander~\cite[Theorem~1]{HanssonHagander1999} holds.  Hansson and Hagander~\cite[Theorem~1]{HanssonHagander1999} give existence and uniqueness of a positive semidefinite stabilizing solution of \eqref{eq:dare}.
\end{proof}

\begin{proof}[Proof of Lemma~\ref{lem:full-reduced-dare-equivalence}]
First suppose that $P$ satisfies \eqref{eq:dare} and $D(P)$ is nonsingular.  The derivation of \eqref{eq:sr-P-from-H} from \eqref{eq:dare} gives \eqref{eq:sr-P-block-form}.  Substituting \eqref{eq:sr-P-block-form} into \eqref{eq:sr-DP}--\eqref{eq:sr-H-of-P} gives
\begin{align*}
    D(P)&=D, & N(P)&=N, \\
    H(P)&=I+\beta H-ND^{-1}N^\top .
\end{align*}
Since \eqref{eq:sr-P-block-form} also gives $H(P)=H$, the reduced DARE \eqref{eq:sr-reduced-H-DARE} follows.
Conversely, suppose that $H$ satisfies \eqref{eq:sr-reduced-H-DARE-expanded} and define $P$ by \eqref{eq:sr-P-block-form}.  Substituting \eqref{eq:sr-P-block-form} into \eqref{eq:sr-DP}--\eqref{eq:sr-H-of-P} gives $D(P)=D$, $N(P)=N$, and
\[
    H(P)=I+\beta H-ND^{-1}N^\top .
\]
Using \eqref{eq:sr-reduced-H-DARE} gives $H(P)=H$.  Therefore \eqref{eq:sr-P-block-form} is the same as \eqref{eq:sr-P-from-H}.  Since \eqref{eq:dare} is equivalent to \eqref{eq:sr-P-from-H} under the definitions \eqref{eq:sr-DP}--\eqref{eq:sr-H-of-P}, the matrix $P$ satisfies \eqref{eq:dare}.

It remains to compare the two stabilizing conditions.  The feedback associated with \eqref{eq:dare} is $u=-rD^{-1}N^\top(s+Rd)$.
Put $w=s+Rd$.  From \eqref{eq:sr-dynamics} $s^+=rw-Su$ and $\qquad d^+=u$.
Thus
\begin{align*}
    w^+&=s^++Rd^+ =rw+(R-S)u \\
       &=r\bigl(I-CD^{-1}N^\top\bigr)w=(rI-rCD^{-1}N^\top)w,
\end{align*}
where the third equality uses $C=R-S$ and the displayed feedback.  Also
$d^+=-rD^{-1}N^\top w$.
In the coordinates $(w,d)$, the full closed-loop matrix is therefore
\[
    \begin{bmatrix}
        r(I-CD^{-1}N^\top)&0\\
        -rD^{-1}N^\top&0
    \end{bmatrix}.
\]
The coordinate change from $(s,d)$ to $(w,d)$ is nonsingular because $w=s+Rd$.  Hence the eigenvalues of the original closed-loop matrix are the eigenvalues of \eqref{eq:sr-reduced-closed-loop} together with $m$ zeros.  The original closed-loop matrix is stable if and only if every eigenvalue of \eqref{eq:sr-reduced-closed-loop} lies in the open unit disk.
\end{proof}

\begin{proof}[Proof of Lemma~\ref{lem:reduced-dare-existence-conditions}]
Consider the semidefinite DARE
{\small
\begin{equation}\label{eq:reduced-Hansson-Hagander-DARE}
\begin{aligned}
    H={}&A_r^\top HA_r+Q_r 
      -(A_r^\top HB_r+L_r)\\
      &\quad\times(R_r+B_r^\top HB_r)^{-1}
      (B_r^\top HA_r+L_r^\top).
\end{aligned}
\end{equation}
}
with
\[
\begin{gathered}
    A_r=rI,\qquad B_r=rC,\qquad Q_r=I,\\
    L_r=-S,\qquad R_r=S^\top S .
\end{gathered}
\]
Since $r^2=\beta$,
\begin{align*}
    A_r^\top HA_r&=\beta H,\\
    R_r+B_r^\top HB_r&=S^\top S+\beta C^\top HC,\\
    A_r^\top HB_r+L_r&=\beta HC-S .
\end{align*}
Substituting these identities into \eqref{eq:reduced-Hansson-Hagander-DARE} gives \eqref{eq:sr-reduced-H-DARE-expanded}.  The cost matrix is positive semidefinite because
\[
    \begin{bmatrix}Q_r&L_r\\L_r^\top&R_r\end{bmatrix}
    =
    \begin{bmatrix}I\\-S^\top\end{bmatrix}
    \begin{bmatrix}I&-S\end{bmatrix}
    \succeq0 .
\]
Since $0<r<1$, every eigenvalue of $A_r$ lies strictly inside the unit disk.  Hence $(A_r,B_r)$ is stabilizable.
It remains to verify the rank hypothesis in Theorem~1 of Hansson and Hagander~\cite[Theorem~1]{HanssonHagander1999}.  For $|z|\ge1$, consider
\[
    \begin{bmatrix}A_r-zI&B_r\\ I&-S\end{bmatrix}.
\]
Suppose \((A_r-zI)x+B_ru=0, \qquad x-Su=0\).
Substituting $x=Su$ and using $A_r=rI$ and $B_r=rC$ gives
\begin{equation}\label{eq:reduced-rank-condition-edge-matrix}
    (rC+(r-z)S)u=0.
\end{equation}
For each edge $e_i:s\to t$, column $i$ of $rC+(r-z)S$ is $r\hat e_t-z\hat e_s$.  Lemma~\ref{lem:tree-edge-column-rank} applied to \eqref{eq:reduced-rank-condition-edge-matrix} gives $u=0$.  Then $x=Su=0$.  The displayed matrix has full column rank for every $|z|\ge1$.
The cost matrix is positive semidefinite, $(A_r,B_r)$ is stabilizable, and the rank hypothesis in Theorem~1 of Hansson and Hagander holds.  Theorem~1 of Hansson and Hagander~\cite[Theorem~1]{HanssonHagander1999} therefore gives the stated unique stabilizing positive semidefinite solution.  The same theorem gives nonsingularity of $R_r+B_r^\top HB_r$ for this solution.  The identity $R_r+B_r^\top HB_r=S^\top S+\beta C^\top HC$ shows that this matrix is $D$ in \eqref{eq:sr-DH}.
\end{proof}

\DIFdelbegin 
\DIFdel{For one term $q_\Omega\xi_\Omega\xi_\Omega^\top$ in \eqref{eq:downstream-H-ansatz},
}
    \DIFdel{q_\Omega\xi_\Omega\xi_\Omega^\top c_i
    =q_\Omega\xi_\Omega(1_{r_i\in \Omega}-1_{s_i\in \Omega}).
}
\DIFdel{Let $e:s\to r$ be any directed edge and let $v\in V_{\rm rec}$.  If $e\notin E_v^{\rm in}$, then $e$ remains after deleting the edges in $E_v^{\rm in}$.  Hence $s$ }\DIFdelend \DIFaddbegin
\begingroup
\color{black}
\def\DIFaddend{\color{black}}
\def\DIFdelend{\color{black}}
\def\DIFaddendFL{\color{black}}
\def\DIFdelendFL{\color{black}}
\begin{proof}[Proof of Proposition~\ref{prop:ri-factorization}]
\DIFadd{Lemma~\ref{lem:tree-incidence-rank-Z} makes $Z$ in \eqref{eq:Z-definition} nonsingular.  Hence the block triangular matrix $Y$ in \eqref{eq:sr-known-H-Y} is nonsingular.  Given $x_k=(s_k,d_k)$, equations \eqref{eq:sr-known-H-variables} give $\delta_k=d_k$ and \eqref{eq:sr-known-H-w-representation} gives $Z\zeta_k=s_k+Rd_k$.  Equations \eqref{eq:sr-feedback-from-H-state}, \eqref{eq:sr-known-H-F}, }\DIFaddend and \DIFdelbegin \DIFdel{$r$ are in the same connected component after this deletion, so $s\in{\mathcal R}(v)$ implies $r\in{\mathcal R}(v)$.  If $e\in E_v^{\rm in}$, then $r=v$.  Since ${\mathcal R}(v)$ is the connected component containing $v$ after deleting the edges in $E_v^{\rm in}$, $r\in{\mathcal R}(v)$ also in this case.  Therefore
}\DIFdel{
    s\in{\mathcal R}(v)\quad\Longrightarrow\quad r\in{\mathcal R}(v),
    \qquad v\in V_{\rm rec}.
}
\DIFdel{If $\Omega\in{\mathcal L}$ and $s\in\Omega$, then either $\Omega=V$ or \eqref{eq:component-lattice} gives $\Omega=\bigcap_{v\in J}{\mathcal R}(v)$ for a nonempty set $J\subseteq V_{\rm rec}$.  In the first case $r\in\Omega$.  In the second case, $s\in{\mathcal R}(v)$ for every $v\in J$, so \eqref{eq:receiver-side-edge-closure} gives $r\in{\mathcal R}(v)$ for every $v\in J$, and hence $r\in\Omega$.  Thus , for every $\Omega\in{\mathcal L}$, the alternative $s\in\Omega$ and $r\notin\Omega$ is impossible.
Applying this to $e_i:s_i\to r_i$ shows that $1_{r_i\in\Omega}-1_{s_i\in\Omega}$ is nonzero precisely when $r_i\in\Omega$ and $s_i\notin\Omega$.  Summing over $\Omega$ proves \eqref{eq:downstream-HC-identity}.  Multiplying by $c_i^\top=(\hat e_{r_i}-\hat e_{s_i})^\top$ gives \eqref{eq:downstream-CtHC-identity}.  Equation~\eqref{eq:downstream-CtHC-identity} also shows that $c_i^\top Hc_j$ can be nonzero only if some set $\Omega\in{\mathcal L}$ separates the endpoints of both $e_i$ and $e_j$.  }\DIFdelend \DIFaddbegin \DIFadd{\eqref{eq:sr-known-H-U} then give
}\[
    \DIFadd{U\eta_k=FZ\zeta_k=F(s_k+Rd_k)=u_k.
}\]
\DIFadd{Thus $u_k=UY^{-1}x_k=-Kx_k$, so $K=-UY^{-1}$.
}\DIFaddend \end{proof}

\DIFdelbegin 
\DIFdel{It is enough to prove \eqref{eq:atom-contrast-lifting} for
\(z=\hat e_p-\hat e_q\), where $p$ and $q$ belong to the same class $A_\alpha$ in \eqref{eq:boolean-atom-relation}.  Every vector in \eqref{eq:boolean-atom-contrast-set} is a linear combination of such vectors.
Let the }\DIFdelend \DIFaddbegin \begin{proof}[Proof of Lemma~\ref{lem:tree-height}]
\DIFadd{Fix $v_0\in V$ and set $\widetilde h(v_0)=0$.  For $v\in V$, traverse the }\DIFaddend unique undirected path from \DIFdelbegin \DIFdel{$p$ to $q$ be
$v_0=p,\ v_1,\ldots,\ v_k=q$.  For each path edge, define
}
    \DIFdel{\sigma_h=\begin{cases}
    1, & \text{if the directed edge is }v_h\to v_{h+1},\\
    -1, & \text{if the directed edge is }v_{h+1}\to v_h,
    \end{cases}
}
\DIFdelend \DIFaddbegin \DIFadd{$v_0$ to $v$, adding $1$ when an edge is traversed in its directed orientation and subtracting $1$ otherwise.  The path is unique by Assumption~\ref{ass:data}, so $\widetilde h$ is well defined.  For every edge $e_i:s_i\to t_i$, the path construction gives
}\[
    \DIFadd{\widetilde h(t_i)-\widetilde h(s_i)=1.
}\]\DIFaddend 
\DIFdelbegin \DIFdel{$h=0,\ldots,k-1$.  }\DIFdelend \DIFaddbegin \DIFadd{Set $h(v)=\widetilde h(v)-\min_{v'\in V}\widetilde h(v')$.  Then $h$ satisfies \eqref{eq:tree-height} and has minimum zero.  }\DIFaddend If \DIFdelbegin \DIFdel{$\sigma_0=-1$, then the edge between $v_0$ and $v_1$ enters $p$.  Deleting all edges entering $p$ puts $p$ in ${\mathcal R}(p)$ and puts $q$ outside ${\mathcal R}(p)$, because the deleted edgeis the first edge on the unique path from $p$ to $q$.  This contradicts $p\sim_{\mathcal B}q$ in \eqref{eq:boolean-atom-relation}.  Hence $\sigma_0=1$.  The same argument at $q$ gives $\sigma_{k-1}=-1$, because $\sigma_{k-1}=1$ means that the last path edge enters $q$ when the path is read from $p$ to $q$.
For an internal vertex $v_h$, $1\le h\le k-1$, the case $\sigma_{h-1}=\sigma_h$ means that exactly one of the two path edges incident with $v_h$ enters $v_h$.  Deleting the edges entering $v_h$ removes that path edge.  The remaining path edge is not deleted, so ${\mathcal R}(v_h)$ contains $v_h$ and one of the two endpoints $p,q$, but not the other endpoint.  This contradicts $p\sim_{\mathcal B}q$.  Thus
}\DIFdel{
    \sigma_h=-\sigma_{h-1},
    \qquad h=1,\ldots,k-1 .
}
\DIFdelend \DIFaddbegin \DIFadd{$h_1$ and $h_2$ have these properties, then $h_1-h_2$ has equal values at the endpoints of every edge.  Connectivity gives $h_1-h_2=\gamma\ones$ for a scalar $\gamma$.  The two minimum-zero normalizations give $\gamma=0$, so $h_1=h_2$.
}\end{proof}

\begin{proof}[Proof of Theorem~\ref{thm:explicit-height-solution}]
\DIFadd{The proof has five steps.  First, we express the reduced problem as an infinite-horizon flow problem.  Second, height-component balance identities give a componentwise lower bound on its value.  Third, incidence equations on the component trees construct an input sequence attaining every lower bound simultaneously.  Fourth, the resulting quadratic value is shown to satisfy the reduced DARE and its closed-loop matrix is shown to be stable.  Finally, each height component is represented as an intersection of the vertex sets used in \eqref{eq:component-lattice}.
}
\DIFadd{Consider the following infinite-horizon optimization problem:
}\begin{equation}\DIFadd{\label{eq:proof-reduced-value}
\begin{aligned}
    {\mathcal J}(w)&=\inf_{(u_t)_{t\ge0}}
    \sum_{t=0}^{\infty}\|z_t-Su_t\|^2,\\
    z_0&=w,
    \qquad
    z_{t+1}=r(z_t+Cu_t).
\end{aligned}
}\end{equation}\DIFaddend 
\DIFdelbegin \DIFdel{Let $y$ be zero on all edges outside this path.
On the path edge between $v_h$ and $v_{h+1}$, set
}\DIFdel{
    y_h=\sigma_h .
}
\DIFdelend \DIFaddbegin \DIFadd{We compute its value and then verify the reduced DARE \eqref{eq:sr-reduced-H-DARE-expanded} directly.
}

\DIFadd{Set
}\begin{equation}\DIFadd{\label{eq:proof-scaled-variables}
    \bar z_t=r^{-t}z_t,
    \qquad
    v_t=r^{-t}u_t,
    \qquad
    y_t=\bar z_t-Sv_t.
}\end{equation}\DIFaddend 
\DIFdelbegin \DIFdel{The matrix $S$ records the sender of each edge.  From $\sigma_0=1$, the first path edge is sent by $p$, so $(Sy)_p=1$.  From $\sigma_{k-1}=-1$, the last path edge is sent by $q$ with coefficient $-1$, so $(Sy)_q=-1$.  For $1\le h\le k-1$, the two path-edge contributions to $(Sy)_{v_h}$ are present exactly when
}\DIFdelend \DIFaddbegin \DIFadd{Using $C=R-S$ in \eqref{eq:proof-reduced-value} gives
}\begin{equation}\DIFadd{\label{eq:proof-flow-identities}
    \bar z_{t+1}=\bar z_t+Cv_t=y_t+Rv_t,
    \qquad
    \bar z_t=y_t+Sv_t.
}\end{equation}
\DIFadd{Moreover,
}\begin{equation}\DIFadd{\label{eq:proof-scaled-cost}
    \sum_{t=0}^{\infty}\|z_t-Su_t\|^2
    =\sum_{t=0}^{\infty}\beta^t\|y_t\|^2.
}\end{equation}
\DIFadd{For vertex $i$, }\DIFaddend the \DIFdelbegin \DIFdel{corresponding edge is sent by $v_h$.  If both adjacent path edges are received by $v_h$, then both contributions are zero.  If both adjacent path edges are sent by $v_h$, then \eqref{eq:path-sign-alternation} and \eqref{eq:path-lift-y-definition} make the two contributions sum to zero.  Therefore
}\DIFdel{
    Sy=\hat e_p-\hat e_q.
}
\DIFdelend \DIFaddbegin \DIFadd{two identities in \eqref{eq:proof-flow-identities} give
}\begin{align}
    \DIFadd{w_i}&\DIFadd{=y_{0,i}+\sum_{e:s(e)=i}v_{0,e},
        \label{eq:proof-balance-zero}}\\
    \DIFadd{y_{t-1,i}+\sum_{e:t(e)=i}v_{t-1,e}
    }&\DIFadd{=y_{t,i}+\sum_{e:s(e)=i}v_{t,e},
    \qquad t\ge1,
        \label{eq:proof-balance-positive}
}\end{align}\DIFaddend 
\DIFdelbegin \DIFdel{The vector $Cy$ has a contribution $-\sigma_h$ at $v_h$ }\DIFdelend \DIFaddbegin \DIFadd{where $s(e)$ }\DIFaddend and \DIFdelbegin \DIFdel{$\sigma_h$ at $v_{h+1}$ from the path edge between them.  Let $A_\gamma$ be any class in \eqref{eq:boolean-atom-relation}.  Contributions from path edges with both endpoints in $A_\gamma$ cancel in \(\sum_{i\in A_\gamma}(Cy)_i\).  For each maximal consecutive subpath with vertices in $A_\gamma$, \eqref{eq:path-sign-alternation} makes the two boundary contributions cancel.  If such a subpath contains $p$ or $q$, then the endpoint contribution cancels with the single boundary contribution because $\sigma_0=1$ }\DIFdelend \DIFaddbegin \DIFadd{$t(e)$ denote the sender and receiver of edge $e$ within this proof.
}

\DIFadd{Fix $a\le h_{\max}$ and $\Omega\in{\mathcal C}_a$.  Sum \eqref{eq:proof-balance-zero}--\eqref{eq:proof-balance-positive} over
}\[
    \DIFadd{\{(v,t):v\in\Omega,\ 0\le t\le h(v)-a\}.
}\]
\DIFadd{For each vertex, the vertical terms telescope from $w_v$ to $y_{h(v)-a,v}$.  A transport variable $v_{t,e}$ with edge $e:s\to q$ has its sender occurrence in the sum exactly when
}\[
    \DIFadd{s\in\Omega,
    \qquad
    t\le h(s)-a.
}\]
\DIFadd{Then $h(q)=h(s)+1\ge a$, the edge belongs to $G[V_a]$, }\DIFaddend and \DIFdelbegin \DIFdel{$\sigma_{k-1}=-1$.  Hence
$\sum_{i\in A_\gamma}(Cy)_i=0,
    \; \gamma=1,\ldots,\ell$.
By \eqref{eq:boolean-atom-contrast-set}, $Cy\in{\mathcal W}_{\mathcal B}$.  Linearity gives
\eqref{eq:atom-contrast-lifting} for every $z\in{\mathcal W}_{\mathcal B}$.
}
\DIFdelend \DIFaddbegin \DIFadd{$q$ lies in the same component $\Omega$.  Also
}\[
    \DIFadd{t+1\le h(s)-a+1=h(q)-a,
}\]
\DIFadd{so the receiver occurrence is included.  The converse implication follows by reversing these equalities.  Hence all transport terms cancel and
}\begin{equation}\DIFadd{\label{eq:proof-cut-identity}
    \sum_{v\in\Omega}y_{h(v)-a,v}
    =\sum_{v\in\Omega}w_v.
}\end{equation}
\DIFaddend 

\DIFdelbegin 
\DIFdel{By the definition of $C$ in Section~\ref{sec:problem}
$c_i=\hat e_{r_i}-\hat e_{s_i}$.  For one term in \eqref{eq:general-indicator-span-matrix}, }\DIFdelend \DIFaddbegin \DIFadd{Every pair $(v,t)\in V\times\mathbb Z_{\ge0}$ determines the unique level $a=h(v)-t$ and the unique component $\Omega\in{\mathcal C}_a$ containing $v$.  Conversely, $a\le h(v)$ gives the nonnegative time $t=h(v)-a$.  Regrouping the nonnegative terms in \eqref{eq:proof-scaled-cost} therefore gives
}\begin{equation}\DIFadd{\label{eq:proof-energy-partition}
\sum_{t=0}^{\infty}\beta^t\|y_t\|^2
=
\sum_{a=-\infty}^{h_{\max}}
\sum_{\Omega\in{\mathcal C}_a}
\sum_{v\in\Omega}
\beta^{h(v)-a}y_{h(v)-a,v}^2.
}\end{equation}

\DIFadd{For a scalar $\iota$, weighted Cauchy--Schwarz gives
}\begin{align*}
\DIFadd{\iota^2
}&\DIFadd{=\left(\sum_{v\in\Omega}
\beta^{(h(v)-a)/2}f_v
\beta^{(a-h(v))/2}\right)^2}\\
&\DIFadd{\le
\left(\sum_{v\in\Omega}\beta^{h(v)-a}f_v^2\right)
\left(\sum_{v\in\Omega}\beta^{a-h(v)}\right).
}\end{align*}
\DIFadd{Thus
}\begin{equation}\DIFadd{\label{eq:proof-bundle-minimum}
\min_{\sum_{v\in\Omega}f_v=\iota}
\sum_{v\in\Omega}\beta^{h(v)-a}f_v^2
=\kappa_{a,\Omega}\iota^2,
}\end{equation}
\DIFadd{and equality holds uniquely at
}\begin{equation}\DIFadd{\label{eq:proof-bundle-optimizer}
    f_v=\kappa_{a,\Omega}\beta^{a-h(v)}\iota.
}\end{equation}
\DIFadd{Applying \eqref{eq:proof-bundle-minimum} to \eqref{eq:proof-cut-identity} in every term of \eqref{eq:proof-energy-partition} yields the lower bound
}\begin{equation}\DIFadd{\label{eq:proof-value-lower-bound}
}{\DIFadd{\mathcal J}}\DIFadd{(w)\ge
\sum_{a=-\infty}^{h_{\max}}
\sum_{\Omega\in{\mathcal C}_a}
\kappa_{a,\Omega}(\xi_\Omega^\top w)^2.
}\end{equation}

\DIFadd{We next prove simultaneous attainability.  Prescribe
}\begin{equation}\DIFadd{\label{eq:proof-optimal-y}
    y^\star_{h(v)-a,v}
    =\kappa_{a,\Omega}\beta^{a-h(v)}
      \xi_\Omega^\top w,
    \qquad v\in\Omega.
}\end{equation}
\DIFadd{Fix $(a,\Omega)$ and let $E(\Omega)$ be the edges of the induced tree on $\Omega$.  For $e\in E(\Omega)$, set
}\DIFaddend \[
    \DIFdelbegin 
\DIFdelend \DIFaddbegin \DIFadd{q_e=v_{h(s(e))-a,e}.
}\DIFaddend \]
\DIFdelbegin \DIFdel{Summing this identity over $\Omega\in{\mathcal A}$ gives \eqref{eq:general-indicator-HC-identity}.
}

\DIFdel{Let $M=-S+\beta HC$.
Then \eqref{eq:prop-invariance-Phi} can be written as
}\DIFdel{
    \text{right-hand side of \eqref{eq:prop-invariance-Phi}}
    =I+\beta H-MD(H)^{-1}M^\top .
}
\DIFdelend \DIFaddbegin \DIFadd{At $v\in\Omega$, put $\tau=h(v)-a$ and define
}\[
\DIFadd{\eta_v=
\begin{cases}
w_v,&\tau=0,\\
y^\star_{\tau-1,v},&\tau\ge1,
\end{cases}
\qquad
y_v^{\rm out}=y^\star_{\tau,v}.
}\]
\DIFadd{Let $C_\Omega$ be the incidence matrix of the tree induced by $\Omega$.  The balance equation \eqref{eq:proof-balance-zero} or \eqref{eq:proof-balance-positive}, evaluated at vertex $v$ and time $\tau$, is
}\begin{equation}\DIFadd{\label{eq:proof-incidence-lift}
    C_\Omega q=y^{\rm out}-\eta,
}\end{equation}\DIFaddend 
\DIFdelbegin \DIFdel{This matrix is symmetric.  By \eqref{eq:boolean-atom-space}, a symmetric matrix belongs to ${\mathcal V}_{\mathcal B}$ if and only if it maps every vector in ${\mathcal W}_{\mathcal B}$ to zero.  To prove the equivalence, first suppose a symmetric matrix $X$ is constant on every block $A_\alpha\times A_\gamma$.  If $z\in{\mathcal W}_{\mathcal B}$, then each entry of $Xz$ is a linear combination of the sums $\sum_{i\in A_\alpha}z_i$, so $Xz=0$ by \eqref{eq:boolean-atom-contrast-set}.  Conversely, if $Xz=0$ for every $z\in{\mathcal W}_{\mathcal B}$, then $X(\hat e_i-\hat e_j)=0$ whenever $i$ and $j$ belong to the same class $A_\alpha$.  Thus the $i$th and $j$th columns of $X$ are equal.  Since $X$ is symmetric, the corresponding rows are equal, and $X$ is constant on every block $A_\alpha\times A_\gamma$.
Let $z\in{\mathcal W}_{\mathcal B}$.  Since $H\in{\mathcal V}_{\mathcal B}$, the equivalence just proved gives
}\DIFdel{
    Hz=0 .
}
\DIFdelend \DIFaddbegin \DIFadd{Equation \eqref{eq:proof-optimal-y} gives
}\[
    \DIFadd{\sum_{v\in\Omega}y_v^{\rm out}=\xi_\Omega^\top w.
}\]\DIFaddend 
\DIFdelbegin \DIFdel{Lemma~\ref{lem:atom-contrast-lifting} gives $y\in\R^m$ such that
}\DIFdel{
    Sy=z,
    \qquad
    Cy\in{\mathcal W}_{\mathcal B}.
}
\DIFdelend \DIFaddbegin \DIFadd{Every vertex of $\Omega$ either has height $a$ or belongs to $V_{a+1}$.  The latter vertices are partitioned by the components $\Gamma\in{\mathcal C}_{a+1}$ contained in $\Omega$: if such a component meets $\Omega$, every path in $G[V_{a+1}]$ is also a path in $G[V_a]$, so the whole component lies in $\Omega$.  Therefore
}\begin{align*}
\DIFadd{\sum_{v\in\Omega}\eta_v
}&\DIFadd{=\sum_{\substack{v\in\Omega\\h(v)=a}}w_v
+\sum_{\substack{\Gamma\in{\mathcal C}_{a+1}\\\Gamma\subseteq\Omega}}
  \sum_{v\in\Gamma}y^\star_{h(v)-(a+1),v}}\\
&\DIFadd{=\sum_{\substack{v\in\Omega\\h(v)=a}}w_v
+\sum_{\substack{\Gamma\in{\mathcal C}_{a+1}\\\Gamma\subseteq\Omega}}
  \xi_\Gamma^\top w
=\xi_\Omega^\top w,
}\end{align*}\DIFaddend 
\DIFdelbegin \DIFdel{Applying the same equivalence to $H$ and
using \eqref{eq:prop-lift-y} gives
}\DIFdelend \DIFaddbegin \DIFadd{where the second equality uses \eqref{eq:proof-optimal-y}.
Hence $\ones^\top(y^{\rm out}-\eta)=0$.  The proof of Lemma~\ref{lem:tree-incidence-rank-Z}, applied to the connected tree on $\Omega$, gives
}\[
    \DIFadd{\range C_\Omega=\{b:\ones^\top b=0\}
}\]
\DIFadd{and full column rank of $C_\Omega$.  Therefore \eqref{eq:proof-incidence-lift} has a unique solution.  Every transport variable belongs to exactly one equation of the form \eqref{eq:proof-incidence-lift}, because its level is $a=h(s(e))-t=h(t(e))-(t+1)$.  These solutions define a feasible sequence that attains equality in every instance of \eqref{eq:proof-bundle-minimum}.
}

\DIFadd{For $a\le0$, $V_a=V$ and ${\mathcal C}_a=\{V\}$.  Hence
}\[
    \DIFadd{\kappa_{a,V}
    =\frac{\beta^{-a}}{\sum_{v\in V}\beta^{-h(v)}}
}\]
\DIFadd{and
}\DIFaddend \begin{equation}\DIFdelbegin 
\DIFdel{HCy}\DIFdelend \DIFaddbegin \label{eq:proof-global-tail}
    \DIFadd{\sum_{a=-\infty}^{0}\kappa_{a,V}
    }\DIFaddend =\DIFdelbegin \DIFdel{0 }\DIFdelend \DIFaddbegin \DIFadd{\frac{1}{(1-\beta)\sum_{v\in V}\beta^{-h(v)}}=q_0}\DIFaddend .
\end{equation}
Equations \DIFdelbegin \DIFdel{\eqref{eq:prop-Hz-zero}}\DIFdelend \DIFaddbegin \DIFadd{\eqref{eq:proof-value-lower-bound}}\DIFaddend --\DIFdelbegin \DIFdel{\eqref{eq:prop-HCy-zero} give
}\DIFdelend \DIFaddbegin \DIFadd{\eqref{eq:proof-global-tail} prove
}\DIFaddend \begin{equation}\DIFdelbegin 
\DIFdel{M^\top z=-S^\top z,
    \qquad
    My=-z,
    \qquad
    D}\DIFdelend \DIFaddbegin \label{eq:proof-value-Hstar}
    {\DIFadd{\mathcal J}}\DIFaddend (\DIFdelbegin \DIFdel{H}\DIFdelend \DIFaddbegin \DIFadd{w}\DIFaddend )\DIFdelbegin \DIFdel{y}\DIFdelend =\DIFdelbegin \DIFdel{S}\DIFdelend \DIFaddbegin \DIFadd{w}\DIFaddend ^\top \DIFdelbegin \DIFdel{z .
}\DIFdelend \DIFaddbegin \DIFadd{H_\star w,
}\DIFaddend \end{equation}
\DIFdelbegin \DIFdel{Since $D(H)$ is nonsingular by hypothesis, \eqref{eq:prop-M-identities} gives $y=D(H)^{-1}S^\top z$.  Therefore \eqref{eq:prop-Phi-compact} gives
}\begin{align*}
    \DIFdel{\bigl(I+\beta H-MD(H)^{-1}M^\top\bigr)z
    }&\DIFdel{=z+\beta Hz+MD(H)^{-1}S^\top z }\\
    &\DIFdel{=z+My=0,
}\end{align*}
\DIFdel{where the second equality uses \eqref{eq:prop-Hz-zero} and $y=D(H)^{-1}S^\top z$, and the last equality uses \eqref{eq:prop-M-identities}.  Hence the }\DIFdelend \DIFaddbegin \DIFadd{where $H_\star$ is the }\DIFaddend right-hand side of \DIFdelbegin \DIFdel{\eqref{eq:prop-invariance-Phi} maps ${\mathcal W}_{\mathcal B}$ to zero.  The block-constant equivalence proved at the start of this proof gives \eqref{eq:prop-invariance-Phi}.
}
\DIFdelend \DIFaddbegin \DIFadd{\eqref{eq:explicit-height-solution}.  The unique minimizers in \eqref{eq:proof-bundle-optimizer} and the unique solutions of \eqref{eq:proof-incidence-lift} show that the optimal input sequence is unique.
}\DIFaddend 

\DIFdelbegin 
\DIFdelend \DIFaddbegin \DIFadd{Every coefficient in $H_\star$ is positive, so $H_\star\succeq0$.  Every input sequence in \eqref{eq:proof-reduced-value} consists of its first input $u$ followed by a tail starting from $r(w+Cu)$.  Conversely, every such first input and tail define an admissible sequence.  For every $\varphi\in\R^N$, \eqref{eq:proof-value-Hstar} gives ${\mathcal J}(r\varphi)=r^2\varphi^\top H_\star\varphi=\beta\varphi^\top H_\star\varphi$.  This decomposition gives
}\begin{equation}\DIFadd{\label{eq:proof-bellman}
w^\top H_\star w
=\inf_u\left\{
\|w-Su\|^2
+\beta(w+Cu)^\top H_\star(w+Cu)
\right\}.
}\end{equation}
\DIFaddend Let
\DIFdelbegin \DIFdel{$\Phi(H)$ denote the right-hand side of \eqref{eq:sr-reduced-H-DARE-expanded}.
Let
}
    \DIFdel{\Pi_{\mathcal B}
    =\sum_{\alpha=1}^{\ell}\frac{1}{|A_\alpha|}\xi_{A_\alpha}\xi_{A_\alpha}^\top .
}
\DIFdelend \DIFaddbegin \[
\DIFadd{D_\star=S^\top S+\beta C^\top H_\star C,
\qquad
M_\star=-S+\beta H_\star C.
}\]\DIFaddend 
\DIFdelbegin \DIFdel{This is the orthogonal projection onto the set of vectors that are constant on every set $A_\alpha$.  Choose $\alpha_0\ge 1/(1-\beta)$ and define
}
    \DIFdel{H_0=\alpha_0\Pi_{\mathcal B}, \qquad H_{j+1}=\Phi(H_j), \qquad j=0,1,2,\ldots .
}
\DIFdel{Since each atom $A_\alpha$ is an element of ${\mathcal B}$, \eqref{eq:boolean-atom-space} gives $H_0\in{\mathcal V}_{\mathcal B}$.  We next prove that $D(H_0)\succ0$.  By \eqref{eq:sr-DH}, for every $u\in\R^m$, }
    \DIFdel{u^\top D(H_0)u=\|Su\|^2+\alpha_0\beta\|\Pi_{\mathcal B}Cu\|^2 .
}
\DIFdel{If this quadratic form is zero, then $Su=0$ and $\Pi_{\mathcal B}Cu=0$.  The equality $Su=0$ says that the sum of the entries of $u$ over the edges leaving each vertex is zero.  The equality $\Pi_{\mathcal B}Cu=0$ says that }\DIFdelend \DIFaddbegin \DIFadd{At $w=0$, }\DIFaddend the \DIFdelbegin \DIFdel{sum of the entries of $Cu$ over every atom $A_\alpha$ is zero .  Each set ${\mathcal R}(v)$ in \eqref{eq:component-lattice} is a union of atoms by \eqref{eq:boolean-atom-relation}.  Hence $\xi_{{\mathcal R}(v)}^\top Cu=0$
for every $v\in V_{\rm rec}$.
By the definition of ${\mathcal R}(v)$ before \eqref{eq:component-lattice}, the only edges crossing the boundary of ${\mathcal R}(v)$ are the edges entering $v$.  Therefore $\xi_{{\mathcal R}(v)}^\top Cu=0$ says that the sum of the entries of }\DIFdelend \DIFaddbegin \DIFadd{objective in \eqref{eq:proof-bellman} is $u^\top D_\star u$.  If a nonzero }\DIFaddend $u$ \DIFdelbegin \DIFdel{over the edges entering $v$ is }\DIFdelend \DIFaddbegin \DIFadd{belonged to $\ker D_\star$, that input followed by the optimal tail from $rCu$ would be a second zero-cost optimal sequence from the }\DIFaddend zero \DIFdelbegin \DIFdel{.  Thus the sum of the entries of $u$ over the edges leaving each vertex is zero, }\DIFdelend \DIFaddbegin \DIFadd{initial state.  The unique minimizers in \eqref{eq:proof-bundle-optimizer} and the unique solutions of \eqref{eq:proof-incidence-lift} give a unique optimal input sequence, so this second sequence is impossible.  Hence $D_\star\succ0$.
}

\DIFadd{Expanding \eqref{eq:proof-bellman} gives
}\[
\DIFadd{w^\top(I+\beta H_\star)w
+2w^\top M_\star u
+u^\top D_\star u.
}\]
\DIFadd{Completing the square yields the unique minimizing feedback
}\begin{equation}\DIFadd{\label{eq:proof-Fstar}
    u=F_\star w,
    \qquad
    F_\star=-D_\star^{-1}M_\star^\top,
}\end{equation}
\DIFaddend and the \DIFdelbegin \DIFdel{sum of the entries of $u$ over the edgesentering each receiving vertex is zero.  A leaf has only one incident edge }\DIFdelend \DIFaddbegin \DIFadd{DARE identity
}\begin{equation}\DIFadd{\label{eq:proof-Hstar-DARE}
    H_\star
    =I+\beta H_\star-M_\star D_\star^{-1}M_\star^\top.
}\end{equation}

\DIFadd{It remains to prove stability.  Let
}\[
    \DIFadd{A_\star=r(I+CF_\star).
}\]
\DIFadd{Substituting \eqref{eq:proof-Fstar} into \eqref{eq:proof-bellman} gives
}\begin{equation}\DIFadd{\label{eq:proof-Hstar-Lyapunov}
H_\star
=(I-SF_\star)^\top(I-SF_\star)
+A_\star^\top H_\star A_\star.
}\end{equation}
\DIFadd{Suppose $A_\star\psi=\lambda\psi$ for a nonzero complex vector $\psi$ with $|\lambda|\ge1$.  With $^*$ denoting conjugate transpose, the Hermitian quadratic form of \eqref{eq:proof-Hstar-Lyapunov} gives
}\[
\DIFadd{(1-|\lambda|^2)\psi^*H_\star\psi
=\|(I-SF_\star)\psi\|^2.
}\]
\DIFadd{The left side is nonpositive and the right side is nonnegative, so both sides are zero.  Set $u=F_\star\psi$.  Then $\psi=Su$, and
}\[
    \DIFadd{\lambda\psi=A_\star\psi=r(\psi+Cu)=rRu.
}\]
\DIFadd{Thus $(rR-\lambda S)u=0$.  Lemma~\ref{lem:tree-edge-column-rank}, with $z=\lambda$, gives $u=0$, and then $\psi=Su=0$, a contradiction.  Therefore $\rho(A_\star)<1$.  Lemma~\ref{lem:reduced-dare-existence-conditions} now gives $H=H_\star$}\DIFaddend .
\DIFdelbegin \DIFdel{If this edge leaves the leaf, the outgoing-sum condition gives that its entry in $u$ is zero.  If this edge enters the leaf, the incoming-sum condition gives that its entry in $u$ is zero.  After removing that leaf and edge, the two zero-sum conditions are unchanged at all remaining vertices, because the removed edge has entry zero.  Induction over the number of edges gives $u=0$.  Hence $D(H_0)\succ0$.
Proposition~\ref{prop:intersection-span-invariance} gives
$\Phi(H_0)\in{\mathcal V}_{\mathcal B}$.  Since the last term in \eqref{eq:prop-invariance-Phi} is positive semidefinite,
}\DIFdelend \DIFaddbegin 

\DIFadd{Finally, let $a\ge1$ and $\Omega\in{\mathcal C}_a$.  Since $\min h=0$, the set $\Omega$ is a proper subset of $V$ and has a boundary edge.  Each boundary edge joins a vertex of height $a-1$ outside $\Omega$ to a vertex of height $a$ inside $\Omega$, so \eqref{eq:tree-height} directs the edge into $\Omega$.  Let $J_\Omega$ be the set of receiving vertices of these boundary edges.  Deleting all edges entering $v\in J_\Omega$ removes no edge internal to $\Omega$, so $\Omega\subseteq{\mathcal R}(v)$.  Conversely, if $v_{\rm out}\notin\Omega$, the unique path from $v_{\rm out}$ to $\Omega$ first enters through an edge ending at some $v\in J_\Omega$.  That edge is deleted when ${\mathcal R}(v)$ is formed, so $v_{\rm out}\notin{\mathcal R}(v)$.  Hence
}\[
    \DIFadd{\Omega=\bigcap_{v\in J_\Omega}{\mathcal R}(v),
}\]
\DIFadd{which places $\Omega$ in ${\mathcal L}$.  The set $V$ is in ${\mathcal L}$ by \eqref{eq:component-lattice}.  Equation \eqref{eq:explicit-height-solution} therefore gives $H\in{\mathcal V}_{\mathcal L}$.  Positivity of the coefficients follows from $0<\beta<1$ and nonemptiness of every component.
}\end{proof}

\begin{proof}[Proof of Corollary~\ref{cor:exact-incidence-compression}]
\DIFadd{For edge $e_i:s_i\to t_i$, \eqref{eq:tree-height} gives $h(t_i)=h(s_i)+1=a_i$.  At level $a_i$, the receiver belongs to $V_{a_i}$ and the sender does not, so
}\[
    \DIFadd{\xi_{\Omega_i}^\top c_i=1.
}\]
\DIFadd{If $a<a_i$, both endpoints belong to $V_a$ and their edge places them in the same component of $G[V_a]$.  If $a>a_i$, neither endpoint belongs to $V_a$.  Consequently, among all positive-level terms in \eqref{eq:explicit-height-solution}, only the term indexed by $(a_i,\Omega_i)$ has a nonzero product with $c_i$.  The global term vanishes because $\ones^\top c_i=0$.  This proves \eqref{eq:exact-HC-column}.
}

\DIFadd{Applying $c_j^\top$ to \eqref{eq:exact-HC-column} gives
}\DIFaddend \[
    \DIFdelbegin \DIFdel{\Phi(H_0)\preceq I+\beta H_0}\DIFdelend \DIFaddbegin \DIFadd{c_j^\top Hc_i}\DIFaddend =\DIFdelbegin \DIFdel{I+\alpha_0\beta\Pi_{\mathcal B} }\DIFdelend \DIFaddbegin \DIFadd{\kappa_i c_j^\top\xi_{\Omega_i}}\DIFaddend .
\]
Let \DIFdelbegin \DIFdel{$x\in\R^N$ and write $x=x_c+x_w$, where $x_c=\Pi_{\mathcal B}x$ and $x_w=x-x_c$.  Then $x_w\in{\mathcal W}_{\mathcal B}$ by \eqref{eq:boolean-atom-contrast-set}.  The block-constant equivalence proved in Proposition~\ref{prop:intersection-span-invariance} gives $\Phi(H_0)x_w=0$ and $H_0x_w=0$.  Therefore
}\begin{align*}
    \DIFdel{x^\top\Phi(H_0)x
    }&\DIFdel{=x_c^\top\Phi(H_0)x_c \le (1+\alpha_0\beta)\|x_c\|^2 }\\
    &\DIFdel{\le \alpha_0\|x_c\|^2
      =x^\top H_0x,
}\end{align*}
\DIFdel{where the second inequality uses $\alpha_0\ge1/(1-\beta)$.  Hence $\Phi(H_0)\preceq H_0$.
The monotone Riccati iteration initialized at the positive semidefinite supersolution $H_0$ converges to the stabilizing solution; see the DARE convergence theorem in~\cite{BittantiLaubWillems1991}.  Hence $H_j\to H$.  The same theorem gives $D(H_j)\succ0$ for every $j$.  Proposition~\ref{prop:intersection-span-invariance} and induction give $H_j\in{\mathcal V}_{\mathcal B}$ for every $j$.  Since ${\mathcal V}_{\mathcal B}$ is the finite-dimensional linear span in \eqref{eq:boolean-atom-space}, it is closed.  Passing to the limit $H_j\to H$ gives $H\in{\mathcal V}_{\mathcal B}$}\DIFdelend \DIFaddbegin \DIFadd{edge $e_j$ have sender $s_j$ and receiver $t_j$.  If $(a_j,\Omega_j)=(a_i,\Omega_i)$, then $t_j\in\Omega_i$ and $s_j\notin\Omega_i$, so $c_j^\top\xi_{\Omega_i}=1$.  If $a_j<a_i$, both endpoints of $e_j$ are outside $V_{a_i}$.  If $a_j>a_i$, both endpoints belong to $V_{a_i}$ and their edge places them in the same component of $G[V_{a_i}]$.  If $a_j=a_i$ but $\Omega_j\ne\Omega_i$, neither endpoint belongs to $\Omega_i$.  Hence $c_j^\top\xi_{\Omega_i}=0$ in all remaining cases.  This proves \eqref{eq:exact-CtHC-entry}.  Grouping equal pairs $(a_i,\Omega_i)$ gives \eqref{eq:exact-CtHC-blocks}}\DIFaddend .
\end{proof}

\begin{proof}[Proof of Lemma~\ref{lem:transitive-closure-classes}]
The \DIFdelbegin \DIFdel{chain characterization of $P^*$ is the standard characterization of transitive closure for a binary relation~\cite[Sec.~9.4]{Rosen2012}.  Since $P(p,p)=1$, the one-index chain gives $P^*(p,p)=1$.  Since $P$ is symmetric, reversing any }\DIFdelend \DIFaddbegin \DIFadd{relation $\Pi^*(p,q)=1$ holds exactly when there is a }\DIFaddend chain \DIFaddbegin \DIFadd{$i_0=p,i_1,\ldots,i_\ell=q$ with $\Pi(i_{j-1},i_j)=1$ for every $j$.  The diagonal condition gives reflexivity.  Symmetry of $\Pi$ allows every chain to be reversed, which gives symmetry.  Concatenating a chain }\DIFaddend from $p$ to $q$ \DIFdelbegin \DIFdel{gives }\DIFdelend \DIFaddbegin \DIFadd{with }\DIFaddend a chain from $q$ to \DIFdelbegin \DIFdel{$p$, so $P^*(p,q)=1$ gives $P^*(q,p)=1$.  If $P^*(p,q)=1$ and $P^*(q,r)=1$, concatenating the two chains gives $P^*(p,r)=1$.  }\DIFdelend \DIFaddbegin \DIFadd{$\nu$ gives transitivity.  }\DIFaddend Thus $\sim$ is an equivalence relation.
\DIFaddbegin 

\DIFaddend If $p$ and $q$ are in different equivalence classes, then \DIFdelbegin \DIFdel{$P^*(p,q)=0$.  Since $P(p,q)=1$ would give a chain of length one and hence $P^*(p,q)=1$, we must have $P(p,q)=0$.  The hypothesis on $A$ gives $A_{pq}=0$.  Therefore entries connecting different equivalence classes vanish.  After reordering }\DIFdelend \DIFaddbegin \DIFadd{$\Pi^*(p,q)=0$.  The implication $\Pi(p,q)=1\Rightarrow \Pi^*(p,q)=1$ gives $\Pi(p,q)=0$, and the hypothesis on $X$ gives $X_{pq}=0$.  Reordering }\DIFaddend the indices so \DIFdelbegin \DIFdel{all indices in each class appear consecutively, $A$ is }\DIFdelend \DIFaddbegin \DIFadd{that each equivalence class is consecutive therefore makes $X$ }\DIFaddend block diagonal.
\end{proof}

\begin{proof}[Proof of Proposition~\ref{prop:general-separator-FZ-sparsity}]
\DIFdelbegin \DIFdel{Write $H=\sum_{\Omega\in{\mathcal L}}q_\Omega\xi_\Omega\xi_\Omega^\top$ and $b_\Omega=C^\top\xi_\Omega$.  Then \(D=S^\top S+\beta\sum_{\Omega\in{\mathcal L}}q_\Omega b_\Omega b_\Omega^\top\).  Lemma~\ref{lem:reduced-dare-existence-conditions} gives that $D$ is nonsingular.
By the definition of $S$ in Section~\ref{sec:problem}, the $(p,q)$ entry of $S^\top S$ }\DIFdelend \DIFaddbegin \DIFadd{Equation \eqref{eq:sr-DH} defines $D=S^\top S+\beta C^\top HC$.  The entry $(S^\top S)_{pq}$ }\DIFaddend is zero unless \DIFdelbegin \DIFdel{$e_p$ and $e_q$ have the same sender.  Lemma~\ref{lem:downstream-compression} shows that the $(p,q)$ entry of $b_\Omega b_\Omega^\top$ }\DIFdelend \DIFaddbegin \DIFadd{$s_p=s_q$, and \eqref{eq:exact-CtHC-entry} shows that $(C^\top HC)_{pq}$ }\DIFaddend is zero unless \DIFdelbegin \DIFdel{$e_p$ and $e_q$ both cross the boundary of $\Omega$.  Therefore }\DIFdelend \DIFaddbegin \DIFadd{$(a_p,\Omega_p)=(a_q,\Omega_q)$.  Hence }\DIFaddend $D_{pq}=0$ whenever $P_D(p,q)=0$.  \DIFdelbegin \DIFdel{The matrix $P_D$ is symmetric and satisfies $P_D(p,p)=1$ for every $p$, so }\DIFdelend Lemma~\ref{lem:transitive-closure-classes} \DIFdelbegin \DIFdel{applies with $P=P_D$ and $A=D$.  Thus, after reordering the indices by the }\DIFdelend \DIFaddbegin \DIFadd{makes $D$ block diagonal over the }\DIFaddend equivalence classes of $P_D^*$\DIFdelbegin \DIFdel{, $D=\operatorname{diag}(D_1,\ldots,D_t)$.  Since }\DIFdelend \DIFaddbegin \DIFadd{.  The matrix }\DIFaddend $D$ is nonsingular \DIFdelbegin \DIFdel{, each $D_a$ is nonsingular, and $D^{-1}=\operatorname{diag}(D_1^{-1},\ldots,D_t^{-1})$.
Now put $G=(S^\top-\beta C^\top H)Z$.  }\DIFdelend \DIFaddbegin \DIFadd{by Lemma~\ref{lem:reduced-dare-existence-conditions}, so $D^{-1}$ has the same block decomposition.
}

\DIFadd{Let
}\[
    \DIFadd{G_0=(S^\top-\beta C^\top H)Z.
}\]
\DIFaddend For the first $m$ columns of $Z$\DIFdelbegin \DIFdel{from \eqref{eq:Z-definition}}\DIFdelend , the sender term \DIFdelbegin \DIFdel{$(S^\top C)_{ab}$ }\DIFdelend \DIFaddbegin \DIFadd{$(S^\top C)_{pq}$ }\DIFaddend can be nonzero only when \DIFdelbegin \DIFdel{$C_{s(a),b}\ne0$.  The $H$ term satisfies $(C^\top HC)_{ab}$, which is zero unless some $\Omega\in{\mathcal L}$ separates the endpoints represented by columns $a$ and $b$ of $C$, by Lemma~\ref{lem:downstream-compression}}\DIFdelend \DIFaddbegin \DIFadd{$C_{s_p,q}\ne0$.  Equation \eqref{eq:exact-CtHC-entry} shows that $(C^\top HC)_{pq}$ can be nonzero only when $(a_p,\Omega_p)=(a_q,\Omega_q)$}\DIFaddend .  Thus the edge-column support of \DIFdelbegin \DIFdel{$G$ }\DIFdelend \DIFaddbegin \DIFadd{$G_0$ }\DIFaddend is contained in $P_G$.
\DIFdelbegin \DIFdel{By \eqref{eq:sr-known-H-F} , $FZ$ has the same zero pattern as $D^{-1}G$ up to the nonzero scalar $r$.  For }\DIFdelend \DIFaddbegin 

\DIFadd{Equation \eqref{eq:sr-known-H-F} gives $FZ=rD^{-1}G_0$.  If $(FZ)_{pq}\ne0$ for }\DIFaddend $q\le m$, \DIFdelbegin \DIFdel{a nonzero entry $(FZ)_{pq}$ therefore requires }\DIFdelend \DIFaddbegin \DIFadd{there is }\DIFaddend an index $k$ \DIFdelbegin \DIFdel{with }\DIFdelend \DIFaddbegin \DIFadd{such that }\DIFaddend $(D^{-1})_{pk}\ne0$ and \DIFdelbegin \DIFdel{$G_{kq}\ne0$}\DIFdelend \DIFaddbegin \DIFadd{$(G_0)_{kq}\ne0$}\DIFaddend .  The block \DIFdelbegin \DIFdel{diagonal form }\DIFdelend \DIFaddbegin \DIFadd{decomposition }\DIFaddend of $D^{-1}$ gives \DIFdelbegin \DIFdel{$(D^{-1})_{pk}=0$ unless $p$ and $k$ are in the same equivalence class, which is }\DIFdelend $P_D^*(p,k)=1$\DIFdelbegin \DIFdel{by the definition of $P_D^*$ before Lemma~\ref{lem:transitive-closure-classes}.  Also $G_{kq}\ne0$ implies }\DIFdelend \DIFaddbegin \DIFadd{, while the support of $G_0$ gives }\DIFaddend $P_G(k,q)=1$.  Hence $(P_D^*P_G)(p,q)=1$ in Boolean \DIFdelbegin \DIFdel{matrix arithmeticwhenever $(FZ)_{pq}\ne0$ with $q\le m$, proving }\DIFdelend \DIFaddbegin \DIFadd{arithmetic.  This proves }\DIFaddend \eqref{eq:separator-FZ-mask}.  The \DIFdelbegin \DIFdel{last column of $Z$ is the mean column }\DIFdelend \DIFaddbegin \DIFadd{calculation does not impose a zero in the column corresponding to }\DIFaddend $\ones$\DIFdelbegin \DIFdel{, and no zero claim is made for it}\DIFdelend .
\end{proof}

\DIFdelbegin 
\DIFdel{By \eqref{eq:outward-component-lattice}, ${\mathcal L}=\{T_v:v\in V\}$.  Since $H\in{\mathcal V}_{\mathcal L}$, \eqref{eq:intersection-component-space} gives
$H\in\operatorname{span}\{\xi_{T_v}\xi_{T_v}^\top:v\in V\}$.
Definition~\ref{def:rooted-tree-notation} gives $\xi_{T_v}=\chi_v$ for every $v\in V$.  Substituting this identity into the displayed span gives \eqref{eq:outward-subtree-span}.
}

\DIFdelend \begin{proof}[Proof of Corollary~\ref{cor:outward-rooted-FZ-support}]
\DIFdelbegin \DIFdel{Using \eqref{eq:outward-component-lattice} gives ${\mathcal L}=\{T_v:v\in V\}$ under Assumption~\ref{ass:outward-rooted}.  By }\DIFdelend Lemma~\ref{lem:outward-rooted-orientation} \DIFdelbegin \DIFdel{, every edge is directed }\DIFdelend \DIFaddbegin \DIFadd{directs every edge }\DIFaddend away from the root.  \DIFdelbegin \DIFdel{Thus, if edge $e_i$ has sender $p_i$, then it has receiver $h_i$, where $h_i$ is a child of $p_i$, and $e_i=(p_i,h_i)$.  The only subtree }\DIFdelend \DIFaddbegin \DIFadd{Equation \eqref{eq:tree-height} therefore makes $h(v)$ equal to the number of edges on the path from the root to $v$.  For each non-root vertex $v$, the component of $G[V_{h(v)}]$ containing $v$ is }\DIFaddend $T_v$\DIFdelbegin \DIFdel{whose boundary is crossed by $e_i$ }\DIFdelend \DIFaddbegin \DIFadd{.  Substitution into \eqref{eq:explicit-height-solution} gives \eqref{eq:outward-subtree-span}.
}

\DIFadd{For edge $e_i:s_i\to t_i$, the pair $(a_i,\Omega_i)$ }\DIFaddend is \DIFdelbegin \DIFdel{$T_{h_i}$.  Hence two edges cross the boundary of the same set in ${\mathcal L}$ only when they are the same edge .  }\DIFdelend \DIFaddbegin \DIFadd{$(h(t_i),T_{t_i})$.  Distinct edges have distinct receivers and hence distinct pairs.  Equation \eqref{eq:exact-CtHC-entry} therefore makes $C^\top HC$ diagonal.
}

\DIFaddend The definition of $P_D$ \DIFdelbegin \DIFdel{before Lemma~\ref{lem:transitive-closure-classes} therefore gives
\(P_D(i,j)=1 \quad\Longleftrightarrow\quad i=j\text{ or }p_i=p_j\).
}\DIFdelend \DIFaddbegin \DIFadd{now gives
}\[
    \DIFadd{P_D(i,j)=1
    \quad\Longleftrightarrow\quad
    i=j\ \text{or}\ s_i=s_j.
}\]
\DIFaddend Thus the equivalence classes of $P_D^*$ are the sets $I_v$.  \DIFdelbegin \DIFdel{For }\DIFdelend \DIFaddbegin \DIFadd{In row $i$ of }\DIFaddend $P_G$, the \DIFdelbegin \DIFdel{boundary-crossing part contributes only the diagonal }\DIFdelend \DIFaddbegin \DIFadd{condition $(a_i,\Omega_i)=(a_q,\Omega_q)$ contributes only }\DIFaddend column $i$.  The \DIFdelbegin \DIFdel{sender part in the definition of $P_G$ is nonzero exactly when the $q$th column of $C$ is nonzero at $p_i$.  Under Assumption~\ref{ass:outward-rooted}, this happens for columns corresponding to edges in $I_{p_i}$ and, if $p_i\ne r$, for the column $i^-(p_i)$.  Therefore row $i$ of $P_G$ has support contained in the columns in \eqref{eq:outward-row-mask-U-bound}.  }\DIFdelend \DIFaddbegin \DIFadd{condition $C_{s_i,q}\ne0$ contributes the columns for edges in $I_{s_i}$ and, when $s_i\ne v_{\rm root}$, the column $i^-(s_i)$.  }\DIFaddend Multiplication by $P_D^*$ \DIFdelbegin \DIFdel{in \eqref{eq:separator-FZ-mask} only mixes rows with the same sender , so row $i$ of $P_D^*P_G$ has }\DIFdelend \DIFaddbegin \DIFadd{mixes only rows having sender $s_i$, so }\DIFaddend the same column containment \DIFaddbegin \DIFadd{holds for row $i$ of $P_D^*P_G$}\DIFaddend .  Proposition~\ref{prop:general-separator-FZ-sparsity} gives \DIFdelbegin \DIFdel{the stated support for the first $m$ columns of $FZ$ and makes no zero claim for the mean column}\DIFdelend \DIFaddbegin \DIFadd{\eqref{eq:outward-row-mask-U-bound}}\DIFaddend .
\end{proof}
\endgroup
\DIFaddbegin 

\DIFaddend %
\section*{Declaration of generative artificial intelligence and artificial-intelligence-assisted technologies in the manuscript preparation process}
During the preparation of this work OpenAI ChatGPT 5.2 and 5.6 Sol were used for proof exploration, initial drafting, and adversarial review. The authors have improved and edited the content as needed and takes full responsibility for the content of the published article.

\DIFaddbegin
\section*{Acknowledgment}
\DIFadd{The author thanks Richard Pates for useful comments on an early draft of this paper.}
\DIFaddend

\bibliographystyle{IEEEtran}
\bibliography{references}

\end{document}